\documentclass[10pt, reqno]{amsart}

\ProvidesClass{amsart}[2020/05/21 v2.20.5]
\makeatletter
\@namedef{subjclassname@2020}{%
	\textup{2020} Mathematics Subject Classification}
\makeatother

\usepackage{hyperref}
\usepackage[T1]{fontenc}
\usepackage{amssymb}
\usepackage{amsmath}
\usepackage{amsthm}
\usepackage{enumerate}
\usepackage[hmargin=3.5cm,vmargin=3.5cm]{geometry}
\setlength{\parskip}{0.1cm}

\usepackage{caladea}


\newtheorem{theorem}{Theorem}[section]

\newtheorem{lemma}[theorem]{Lemma}

\newtheorem{corollary}[theorem]{Corollary}
\numberwithin{equation}{section}


\newcommand{\tupleh}{\textbf{h}}

\newcommand{\tuplex}{\textbf{x}}
\newcommand{\tupley}{\textbf{y}}

\newcommand{\tuplealpha}{\boldsymbol{\alpha}}

\newcommand{\tuplebeta}{\boldsymbol{\beta}}
\newcommand{\tuplegamma}{\boldsymbol{\gamma}}

\newcommand{\tuplelambda}{\boldsymbol{\lambda}}

\newcommand{\sinc}{\text{sinc}}
\newcommand{\Ncal}{\mathcal N_{s,\theta}^\tau(P)}

\title{Diophantine Inequalities of Fractional Degree}

\author{Constantinos Poulias}
\address{CP: School of Mathematics, Fry Building, Woodland Road, Clifton, Bristol BS8 1UG, UK.}\email{constantinos.poulias@gmail.com}
\subjclass[2020]{11D75, 11D72, 11P55, 11L07}
\keywords{Exponential sums involving fractional powers of integers, Diophantine Inequalites, Davenport-Heilbronn-Freeman method}

\begin{document}
	\date{}
	\maketitle
	\begin{abstract}
		This paper is concerned with the study of diagonal Diophantine inequalities of fractional degree $ \theta ,$ where $ \theta >2$ is real and non-integral. For fixed non-zero real numbers $ \lambda_i $ not all of the same sign we write 
			\begin{equation*}
				\mathcal F (\tuplex)  = \lambda_1 x_1^\theta + \cdots + \lambda_s x_s^\theta.
			\end{equation*}
		For a fixed positive real number $ \tau $ we give an asymptotic formula for the number of positive integer solutions of the inequality $ |	\mathcal F (\tuplex) | < \tau $ inside a box of side length $P.$ Moreover, we investigate the problem of representing a large positive real number by a positive definite generalized polynomial of the above shape. A key result in our approach is an essentially optimal mean value estimate for exponential sums involving fractional powers of integers.
	\end{abstract}
	
\makeatother

\section{Introduction}
	
A central topic in analytic number theory with a long history and various applications is the study of solubility of Diophantine inequalities. In this paper we are concerned with diagonal Diophantine inequalities whose degree is a fractional power. Let us make this more precise. Suppose that $ \theta > 2$ is real and non-integral, and suppose that $ s $ is a positive integer. Let $ \lambda_1, \ldots , \lambda_s $ be fixed non-zero real numbers, not all of the same sign. Consider the generalised polynomial
	\begin{equation}  \label{eq2.1.1}
		\mathcal F (\tuplex) = \mathcal F(x_1, \ldots, x_s) =  \lambda_1 x_1^\theta + \cdots + \lambda_s x_s^\theta.
	\end{equation}
Suppose that $ \tau $ is a fixed positive real number. A first natural question one can pose is the following. Does the inequality 
	\begin{equation} \label{eq2.1.2}
		\left|  \mathcal F (\tuplex ) \right| < \tau
	\end{equation}
admit a solution $ \tuplex = (x_1, \ldots, x_s) $ in positive integers? Note that the assumption that not all of the coefficients $\lambda_i$ are of the same sign, is natural in order to study the solubility of inequality (\ref{eq2.1.2}), for otherwise one always has $ | \mathcal F (\tuplex) | \geq | \lambda_1 | + \cdots + | \lambda_s |,$ and thus it is clear that $ | \mathcal F (\tuplex) |$ fails to take arbitrarily small values. In the case where inequality (\ref{eq2.1.2}) admits infinitely many solutions in positive integers, one could additionally ask for the distribution of them. To formulate this, take $P$ to be an arbitrary large positive real number that eventually we let tend to infinity. With this parameter serving as a quantification measure for the size of solutions of (\ref{eq2.1.2}) we write $ \Ncal = \mathcal N_{s,\theta}^\tau (P;  \tuplelambda ) $ for the number of positive integer solutions $ \tuplex = ( x_1, \ldots, x_s) $ of (\ref{eq2.1.2}) with $ x_i \in [1, P] \hspace{0.05in} (1 \leq i \leq s).$ A standard heuristic argument suggests that one typically expects $ \gg  P^{s-\theta}$ such solutions to (\ref{eq2.1.2}). 

Throughout the paper we make use of standard notation in the field such as Vinogradov and Landau symbols. We recall this notation at the end of the introduction. For the sake of clarity let us mention here that for $ x \in \mathbb R $ we write $ \lfloor x \rfloor = \max \{ n \in \mathbb Z : n \leq x \}$ to denote the floor function. We may now proceed and state the first result of this paper, which reads as follows.
	
	\begin{theorem} \label{thm2.1.1}
		Suppose that $ \theta > 2 $ is  real and non-integral, and suppose further that $ s \geq \left( \lfloor 2\theta  \rfloor +1 \right) \left( \lfloor 2\theta  \rfloor +2 \right) + 1$ is a natural number. Then as $P \to \infty $ one has
			\begin{equation} \label{eq2.1.3}
				\Ncal = 2\tau \Omega(s, \theta ; \tuplelambda) P^{s-\theta} + o \left( P^{s-\theta}\right),
			\end{equation}	
		where 
			\begin{equation*}
				\Omega(s, \theta ; \tuplelambda) =  \left(\frac{1}{\theta}\right)^s | \lambda_1 \cdots \lambda_s|^{-1/\theta} C(s, \theta ; \tuplelambda) > 0
			\end{equation*}		
		with
			\begin{equation*}
				C(s, \theta ; \tuplelambda) = \int_{\mathcal U} \left( - \sigma_s( \sigma_1 \beta_1 + \cdots + \sigma_{s-1} \beta_{s-1} ) \right)^{1/ \theta -1} ( \beta_1 \cdots \beta_{s-1})^{1 / \theta -1} \normalfont {\text d} \tuplebeta, 
			\end{equation*}
		where $ \normalfont {\text d} \tuplebeta$ here stands for $ \normalfont {\text d} \beta_1 \cdots \normalfont {\text d} \beta_{s-1}$, and  $ \sigma_i = \lambda_i / | \lambda_i|,$ and $\mathcal U$ denotes the set of points of the box $ [0, | \lambda_1 | ] \times \cdots \times [0, | \lambda_{s-1} | ], $ satisfying the condition that 
			\begin{equation*} 
				- \sigma_s (  \sigma_1 \beta_1  + \cdots  + \sigma_{s-1} \beta_{s-1}) \in [0, | \lambda_s|].
			\end{equation*}
		 In particular, the inequality (\ref{eq2.1.2}) possesses a positive integer solution.
	\end{theorem}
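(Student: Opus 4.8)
The plan is to use the Davenport--Heilbronn--Freeman variant of the circle method. Write $K(\alpha)$ for the smoothed indicator of the interval $(-\tau,\tau)$, namely the Fourier transform expression obtained from $\int_{-\tau}^{\tau} e(\alpha t)\,\mathrm{d}t = \frac{\sin(2\pi\tau\alpha)}{\pi\alpha}$, which satisfies $K(\alpha) \ll \min(\tau, |\alpha|^{-1})$ and whose integral against the relevant exponential sum detects the inequality $|\mathcal F(\tuplex)| < \tau$. Introducing the generating function $f_i(\alpha) = \sum_{1 \le x \le P} e(\alpha \lambda_i x^\theta)$ for each $i$, one has the exact identity
\begin{equation*}
	\Ncal = \int_{-\infty}^{\infty} K(\alpha) \prod_{i=1}^{s} f_i(\alpha)\, \mathrm{d}\alpha.
\end{equation*}
The integration range over $\mathbb{R}$ is then partitioned, following Freeman, into three regions: the \emph{major arc} $\mathfrak{M} = \{|\alpha| \le \delta\}$ for a suitable small $\delta = \delta(P) \to 0$ slowly; the \emph{minor arcs} $\mathfrak{m} = \{\delta < |\alpha| \le T\}$ for a large parameter $T = T(P)$; and the \emph{trivial region} $\mathfrak{t} = \{|\alpha| > T\}$.

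First I would handle the major arc. On $\mathfrak{M}$, since $\alpha$ is small, each $f_i(\alpha)$ is well approximated (via partial summation or Euler--Maclaurin, exploiting that $x \mapsto x^\theta$ is smooth and monotone) by the integral $v_i(\alpha) = \int_{0}^{P} e(\alpha \lambda_i t^\theta)\,\mathrm{d}t$. Substituting, extending the resulting integral over $\mathfrak{M}$ to all of $\mathbb{R}$ at acceptable cost, and performing the change of variables $\beta_i = \lambda_i t_i^\theta$ (so $\mathrm{d}t_i = \frac{1}{\theta}|\lambda_i|^{-1/\theta}|\beta_i|^{1/\theta - 1}\,\mathrm{d}\beta_i$) together with Fourier inversion in $\alpha$, produces the singular integral $C(s,\theta;\tuplelambda)$ and the main term $2\tau \Omega(s,\theta;\tuplelambda) P^{s-\theta}$. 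The positivity $\Omega > 0$ follows because the sign hypothesis on the $\lambda_i$ guarantees that the region $\mathcal U$ over which the non-negative integrand is integrated has positive measure, and one must check the integral converges (the exponents $1/\theta - 1 \in (-1,0)$ make the singularities at $\beta_i = 0$ and at the boundary hyperplane integrable).

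Next I would dispose of the trivial region: for $|\alpha| > T$, crudely $|f_i(\alpha)| \le P$ while $K(\alpha) \ll |\alpha|^{-1}$, but this alone does not converge, so one needs a one-dimensional estimate $\int_{|\alpha| > T} |f_i(\alpha)|^{2} \,\mathrm{d}\alpha$ or a van der Corput estimate for a single $f_i$ showing $f_i(\alpha) \ll P^{1/2}$ or better once $|\alpha|$ is large; combined with $K(\alpha)\ll|\alpha|^{-1}$ and $s$ large this forces the contribution to be $o(P^{s-\theta})$. The minor arcs are where the essentially optimal mean value estimate for $\sum_x e(\alpha x^\theta)$ quoted in the abstract enters: one extracts a pointwise bound $\sup_{\delta < |\alpha| \le T} |f_i(\alpha)| \ll P^{1-\rho}$ for some $\rho > 0$ (a Weyl-type inequality for fractional powers, which for non-integral $\theta$ is actually \emph{easier} than the classical case because there is no rational major-arc structure — exponential sums with $x^\theta$ are genuinely "irrational"), and then absorbs the remaining $f_i$'s and the $L^1$-mass of $K$ against $\prod |f_i|$ using the mean value estimate to bound $\int_{\mathfrak m} \prod_{i} |f_i(\alpha)|\,\mathrm{d}\alpha$. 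The condition $s \ge (\lfloor 2\theta\rfloor+1)(\lfloor 2\theta\rfloor+2)+1$ is precisely what makes the number of available variables exceed the threshold in that mean value estimate by enough to beat the $P^{-\rho}$ saving against the trivial size $P^{s}$, after dividing by $P^{\theta}$.

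The main obstacle, I expect, is the minor-arc analysis — specifically, proving the Weyl-type bound $f_i(\alpha) \ll P^{1-\rho}$ uniformly for $\delta < |\alpha| \le T$ with $\delta \to 0$ and $T \to \infty$, since near the inner edge $|\alpha| \approx \delta$ the sum $f_i(\alpha)$ is close to its trivial size and one must track carefully how the saving degrades; this is handled by a Weyl-differencing or van der Corput $A^k B$-process argument tailored to the function $\lambda_i \alpha x^\theta$, whose higher derivatives behave like $\alpha x^{\theta - k}$, giving nontrivial cancellation exactly when $|\alpha| P^{\theta}$ is neither too small nor too large — the boundary cases being pushed into $\mathfrak{M}$ and $\mathfrak{t}$ respectively. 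The final statement, that (\ref{eq2.1.2}) has a positive integer solution, is then immediate: the asymptotic (\ref{eq2.1.3}) with $\Omega > 0$ forces $\Ncal \to \infty$, so in particular $\Ncal \ge 1$ for all large $P$.
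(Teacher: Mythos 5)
Your overall architecture — major/minor/trivial dissection, a Weyl-type pointwise bound on the minor arcs, an essentially optimal $L^{2t}$ mean value estimate over long intervals, and the singular-integral computation on the major arc via the approximation $f_i\approx v_i$ followed by a change of variables and Fourier inversion — coincides with the paper's. But there is a genuine gap in your choice of kernel and, as a consequence, in your treatment of the trivial arcs.

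You take $K(\alpha)=\sin(2\pi\tau\alpha)/(\pi\alpha)$, the Fourier transform of the \emph{sharp} indicator of $(-\tau,\tau)$, and assert the "exact identity" $\Ncal=\int_{\mathbb R}K(\alpha)\prod_i f_i(\alpha)\,\mathrm d\alpha$. This integral does not converge absolutely, since $K(\alpha)\asymp|\alpha|^{-1}$ while $\prod_i|f_i(\alpha)|$ has no decay, so the identity is at best formal and cannot be manipulated term-by-term. You acknowledge the convergence problem on $\mathfrak t$, but the proposed remedy — a uniform pointwise bound $f_i(\alpha)\ll P^{1/2}$ for $|\alpha|>T$ — is false: van der Corput's $k$th derivative test for $\sum_x e(\alpha x^\theta)$ controls the sum only when $F\asymp|\alpha|P^\theta$ lies in a moderate range relative to $P^k$, and the estimate degrades and eventually becomes trivial as $|\alpha|\to\infty$. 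Concretely, even a dyadic decomposition of $\mathfrak t$ against the mean value estimate $\int_{2^j}^{2^{j+1}}|f_i|^{s}\ll 2^{j}P^{s-\theta+\epsilon}$ gives, with $|K|\ll|\alpha|^{-1}$, a contribution $\ll\sum_j 2^{-j}\cdot 2^{j}P^{s-\theta+\epsilon}$ which diverges. The paper avoids this entirely by abandoning the exact identity: following Freeman it introduces two kernels $K_{\pm}$ (Lemma~\ref{lemma2.2.1}) whose Fourier transforms sandwich $\chi_\tau$, giving $R_-(P)\le\Ncal\le R_+(P)$, and which crucially satisfy $K_\pm(\alpha)\ll(\log P)^h|\alpha|^{-h-1}$ for any fixed $h$. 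With $h=1$ the trivial-arc contribution becomes $\ll(\log P)P^{s-\theta+\epsilon}\sum_j 2^{-j}\ll(\log P)P^{s-\theta+\epsilon-\omega}$, which is $o(P^{s-\theta})$ once $\epsilon<\omega/2$; one then checks that the upper and lower bounds $R_{\pm}(P)$ have the same leading asymptotic, since the widths $\tau\pm\widetilde\tau$ with $\widetilde\tau=\tau(\log P)^{-1}$ coalesce. Without some such device — either Freeman's superpolynomially decaying kernels or at least Davenport--Heilbronn's $\mathrm{sinc}^2$ (which only yields one-sided bounds and needs a separate rescaling trick) — your trivial-arc argument does not close, and the main-term extraction from a conditionally convergent integral is not justified.
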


As a first comment on the asymptotic formula (\ref{eq2.1.3}), let us remark that, as will be apparent to experts, the positivity of the real number  $  C(s, \theta ; \tuplelambda) $ follows immediately from the fact that the $ \sigma_i$ are not all of the same sign. In the special case where $ \theta \in \mathbb Q $ is a rational number greater than $2$ one can obtain a "special" family of solutions as follows. Let us write $\theta = p / q \in \mathbb Q $ for some $ p, q \in \mathbb N $ with $\text{gcd}(p,q)=1 .$ Take $ x_i = y_i^q \hspace{0.05in} (1\leq i \leq s)$ where $ y_i \in \mathbb N $ with $ y_i \asymp Y \asymp P^{1/q}.$ Then inequality (\ref{eq2.1.2}) takes the shape $ | \lambda_1 y_1^p + \cdots + \lambda_s y_s^p | < \tau.$ When the number of variables $s$ is large enough in terms of $p,$ as for example in \cite[Theorem 1]{freeman_lower_bounds}, one has that the number of solutions of this last inequality is $ \gg Y^{s-p}.$ Thus, the number of solutions of the inequality (\ref{eq2.1.2}) satisfies $ \mathcal N_{s,\theta}^\tau (P) \gg \left( P^{1/q} \right)^{s-p}.$  On the other hand, the number of solutions obtained in this way is $ o \left( P^{s- p/q} \right),$ and so the number of solutions obtained is very small compared with what is expected.
	
The first to consider studying additive problems with non-integral exponents is Segal in the early 1930's. In the papers \cite{segal_1933_I}, \cite{segal_1933_II} and  \cite{segal_1934_ineq}, Segal studied Waring's problem with non-integral exponents, and additionally (phrased slightly different in his work) considered the problem of solubility of the inequality 
	\begin{equation*}
		| x_1^\theta + \cdots + x_s^\theta - \nu | < \tau,
	\end{equation*}
with $ \theta > 2 $ real and non-integral and  $ 0 < \tau < \nu^{ - c(\theta)/\theta}, $ where $ 0 < c(\theta) < 1$ is a fixed number, depending only on $\theta.$ For large values of $\nu$ Segal showed the existence of a solution $\tuplex \in \mathbb N^s,$ provided that we are given $ s \geq s_0(\theta) $ variables, where $ s_0(\theta) \approx \theta(\lfloor \theta \rfloor+1) 2^{ \lfloor \theta \rfloor+1} +1. $ In Theorem \ref{thm2.1.3} below we improve this.
	
For questions and results on the interface between the fields of Diophantine inequalities and Diophantine approximation the interested reader can refer to the monograph \cite{baker_book_dioph_ineq}, which contains an exposition of some of the most pivotal results in that area, dating up to late 1980's. 

A great body of work in the existing literature is concerned with counting solutions inside a bounded box of side length $P$ to indefinite inequalities of the shape
	\begin{equation} \label{eq2.1.4}
		| \lambda_1 x_1^d + \cdots + \lambda_s x_s^d | < \tau,
	\end{equation}
where $  d \geq 2$ is a natural number, and at least one of the ratios $ \lambda_i / \lambda_j $ is irrational. This last irrationality assumption is necessary, for otherwise if all the coefficients are in rational ratio then one can clear out the denominators by multiplying  with the least common multiple which would reduce the inequality to an equation over the integers. The latter has been a separate area of research since the birth of the Hardy - Littlewood circle method in the early 1920's. 
The problem of the solubility of inequalities of the shape (\ref{eq2.1.4}) first appears in the literature with the seminal work of Davenport and Heilbronn \cite{davenport-heilbr-ineq} in 1946. In that paper the authors prove that any real indefinite diagonal quadratic form in $ s = 5 $ variables can take arbitrarily small values. Their method to prove that result, what now is called the Davenport--Heilbronn method, is a Fourier analytic method over the entire real line. It is important here to mention that the main theorem of \cite{davenport-heilbr-ineq} shows that there exist  arbitrarily large values of the parameter $P$ such that (\ref{eq2.1.4}) with $d=2$ and $ s=5$ is soluble with $ x_i \in[1,P] \cap \mathbb Z.$ 
More precisely, Davenport and Heilbronn prove their result for a sequence of arbitrarily large numbers $P,$ that depends essentially on the continued fraction expansion of the irrational ratio $\lambda_i/ \lambda_j.$ Thus, their conclusion would apply to boxes of side length $P$ whenever this parameter $P$ is a term of that specific sequence of values. This dependence was removed only in the early 2000's by Freeman.
Beginning with \cite{freeman_lower_bounds}, Freeman introduced a variant of the Davenport-Heilbronn method motivated by methods developed in \cite{bentkus_goetze_paper}. This allowed Freeman to show the existence of infinitely many non-trivial integer solutions in boxes of any sufficiently large side length $P,$ given roughly $ d \log d $ variables (when $d$ is large).
Later in \cite{freeman_asymp_formula}, Freeman established for the first time an asymptotic formula for the number of integer solutions of (\ref{eq2.1.4}) inside the box $[-P,P]^s,$ provided we have $ s \geq 2^d +1 $ variables. The results of \cite{freeman_lower_bounds} and \cite{freeman_asymp_formula} were refined by Wooley in \cite{wooley_dioph_ineq}. 
Since we are not dealing with an inequality of positive integral degree $d$ as in (\ref{eq2.1.4}), we finish here our rather short tour amongst results concerning that problem. The interested reader is directed to the papers of Freeman and Wooley for a general discussion. 
	
It is reasonable to expect that a conclusion as in Theorem \ref{thm2.1.1} would remain valid if instead of a homogenous inequality as of the type (\ref{eq2.1.2}) we count solutions to an inhomogeneous inequality of the shape
	\begin{equation} \label{eq2.1.5}
		| \mathcal F (\tuplex)  - L | < \tau,
	\end{equation}
with $ \mathcal F $ as in (\ref{eq2.1.1}) and $L$ being a given real number. We write $ \mathcal N_{s,\theta}^\tau (P ; \tuplelambda, L) $ to denote the number of positive integer solutions $ \tuplex $ of the inequality (\ref{eq2.1.5}) with $ x_i \in [1, P] \hspace{0.05in} (1 \leq i \leq s).$
Here, the generalised polynomial $\mathcal F $ could be either indefinite or definite. In the case where $ \mathcal F $ is indefinite there is no restriction on the size of $ L .$ However, one has to take boxes with side length $P$ being sufficiently large in terms of $s, \theta $ and the coefficients $ \lambda_i$ of $ \mathcal F.$
On the other hand, if $ \mathcal F $ is positive definite then one has to assume that $ L \asymp_{\tuplelambda} P^\theta.$ Namely, there exist suitable positive constants $ c(\tuplelambda), C(\tuplelambda)$ such that  $ L$ belongs to an interval of the shape $ c(\tuplelambda) P^\theta \leq L \leq C (\tuplelambda) P^\theta .$ 
As is to be expected, the counting function of such solutions satisfies the same kind of asymptotic formula as in Theorem \ref{thm2.1.1}. The minor adjustments of the proof are postponed until section \ref{section_inhomog_case}. 
	
	\begin{theorem} \label{thm2.1.2}
		Suppose that $ \mathcal F $ is indefinite and let $ L$ be a fixed real number. Suppose further that $ \theta > 2 $ is real and non-integral, and that $ s \geq \left( \lfloor 2\theta  \rfloor +1 \right) \left( \lfloor 2\theta  \rfloor +2 \right) + 1$ is a natural number. Then as $P \to \infty $ one has
			\begin{equation*}
				\mathcal N_{s,\theta}^\tau (P ; \tuplelambda, L) = 2 \tau \Omega(  s,\theta ;  \tuplelambda) P^{s- \theta} + o \left( P^{s- \theta} \right),
			\end{equation*}
		where $ \Omega(  s, \theta ;  \tuplelambda)  $ is a positive real number depending only on $s,\theta$ and the coefficients $\lambda_i.$
	\end{theorem}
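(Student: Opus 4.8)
Here is a plan for proving Theorem~\ref{thm2.1.2}. The idea is to run the Davenport--Heilbronn--Freeman argument that already underlies Theorem~\ref{thm2.1.1}, keeping track of the one new feature, namely the oscillatory factor $e(-\alpha L)$ produced by the shift by $L$; here and below $e(z)=e^{2\pi i z}$. Put
\begin{equation*}
	f_i(\alpha)=\sum_{1\le x\le P}e(\alpha\lambda_i x^\theta)\quad(1\le i\le s),\qquad F(\alpha)=\prod_{i=1}^{s}f_i(\alpha),
\end{equation*}
choose the usual Fourier kernel $K_\tau$ with $\widehat{K_\tau}$ supported on $[-\tau,\tau]$, and use the same majorant/minorant sandwich as in the proof of Theorem~\ref{thm2.1.1}. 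This reduces the problem to the asymptotic evaluation of $\int_{\mathbb R}F(\alpha)\,e(-\alpha L)\,K_\tau(\alpha)\,\mathrm d\alpha$, with the constant $2\tau$ extracted exactly as there.

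Next I would dissect $\mathbb R$ into the three ranges used for Theorem~\ref{thm2.1.1}: a major arc $\mathfrak M=\{\alpha:|\alpha|\le\nu(P)P^{-\theta}\}$, where $\nu(P)\to\infty$ sufficiently slowly; an intermediate collection of minor arcs $\mathfrak m$ on which $|\alpha|$ is bounded; and a tail $\mathfrak t$ on which $|\alpha|$ is large. Since $|e(-\alpha L)|=1$ identically, every estimate on $\mathfrak m$ and on $\mathfrak t$ is literally the one obtained in the homogeneous case: the minor-arc bound, which rests on the essentially optimal mean value estimate for exponential sums over fractional powers together with Freeman's device exploiting that the $\lambda_i$ are not all of the same sign, and the tail bound, which combines the decay of $K_\tau$ with pointwise control of $f_i$, both carry over verbatim and show that these ranges contribute $o\!\left(P^{s-\theta}\right)$.

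The only genuinely new point is the major arc. On $\mathfrak M$ one has, precisely as in the proof of Theorem~\ref{thm2.1.1}, the approximations $f_i(\alpha)=I_i(\alpha)+(\text{admissible error})$ with $I_i(\alpha)=\int_1^P e(\alpha\lambda_i t^\theta)\,\mathrm d t$, and $\int_{\mathbb R}\prod_{i=1}^s|I_i(\alpha)|\,\mathrm d\alpha\ll P^{s-\theta}$ for $s$ in the range under consideration. Because $\mathfrak M$ shrinks while $L$ is \emph{fixed}, for $\alpha\in\mathfrak M$ one has $|\alpha L|\le|L|\,\nu(P)P^{-\theta}\to0$, so $e(-\alpha L)=1+O(|\alpha|\,|L|)$; the contribution of the error term is
\begin{equation*}
	\ll|L|\,\nu(P)P^{-\theta}\int_{\mathfrak M}\prod_{i=1}^s|I_i(\alpha)|\,\mathrm d\alpha\ll|L|\,\nu(P)P^{-\theta}\cdot P^{s-\theta}=o\!\left(P^{s-\theta}\right),
\end{equation*}
since $\nu(P)=o(P^\theta)$. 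Hence
\begin{equation*}
	\int_{\mathfrak M}F(\alpha)\,e(-\alpha L)\,K_\tau(\alpha)\,\mathrm d\alpha=\int_{\mathfrak M}\prod_{i=1}^s I_i(\alpha)\,K_\tau(\alpha)\,\mathrm d\alpha+o\!\left(P^{s-\theta}\right),
\end{equation*}
and the right-hand integral is exactly the one evaluated for Theorem~\ref{thm2.1.1}, giving $2\tau\,\Omega(s,\theta;\tuplelambda)P^{s-\theta}+o\!\left(P^{s-\theta}\right)$. Summing the three ranges yields the stated formula. I do not expect a new obstacle here: the hard part of the whole scheme, namely the minor-arc estimate (and hence the mean value estimate advertised in the abstract), has already been surmounted for Theorem~\ref{thm2.1.1}, and for the present statement the only thing requiring attention is the remark that, $L$ being fixed, the shift is invisible on the shrinking major arc. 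This is precisely where the hypotheses diverge from the positive definite situation, in which $L\asymp_{\tuplelambda}P^\theta$ keeps $\alpha L$ of order $\nu(P)$ on $\mathfrak M$ and therefore forces a genuinely different major-arc evaluation, and so a different main term.
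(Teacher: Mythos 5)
Your plan is correct and follows the paper's overall structure in Section~\ref{section_inhomog_case}: sandwich the counting function between kernel integrals $R_\pm(P)$, observe that $|e(-\alpha L)|=1$ so the minor- and trivial-arc bounds of Lemmas~\ref{lemma2.4.3} and~\ref{lemma2.5.1} carry over verbatim, and isolate a singular integral on the major arc. The one spot where you genuinely diverge is the major-arc evaluation. You expand $e(-\alpha L)=1+O(|\alpha L|)$ early, absorb the $O$-term using $|\alpha|\ll P^{-\theta+\delta_0}$ on $\mathfrak M$ together with $\int_{\mathbb R}\prod_i|\upsilon_i(\alpha)|\,\text{d}\alpha\ll P^{s-\theta}$, and reduce the remaining integral to the one already evaluated for Theorem~\ref{thm2.1.1}. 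The paper instead carries the factor $e(-\alpha L)$ through the Fourier inversion argument of Section~\ref{section_major_arc_asympt_formular}: the kernel integral in (\ref{eq2.7.3}) then localizes $\widetilde\beta$ to the shifted interval $|\widetilde\beta-LP^{-\theta}|<\tau P^{-\theta}$, and $\Psi(\widetilde\beta)$ is approximated by $\Psi(0)$ there with an error $O\left((\tau+|L|)P^{-\theta}\right)$. Both routes yield $2\tau\Omega(s,\theta;\tuplelambda)P^{s-\theta}$ with a power-saving error, and your variant is a bit more elementary and direct for fixed $L$. The paper's bookkeeping makes the $|L|$-dependence of the error explicit, which is a natural thing to track since Theorem~\ref{thm2.1.3} pushes $L$ to size $P^\theta$; your closing remark correctly identifies why the Taylor-expansion shortcut is unavailable there.
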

	
One can refer to \cite{parsell_ineq_in_primes} for such a conclusion for linear forms over primes, and to \cite{freeman_inhomog_ineq} for a general result concerning additive inhomogeneous inequalities of integral degree $ d \geq 2.$
	
More interesting is the case where $ \mathcal F $ is positive definite. In such a case the problem is reformulated as a problem of representing arbitrarily large numbers by the generalised polynomial $ \mathcal F .$ Instead of counting solutions inside a box, we can count solutions that represent an arbitrary large real number. That is to say, for a positive real number $ \nu $ sufficiently large in terms of $s,\theta$ and the positive number $ \tau,$ we ask how many positive integer solutions are possessed by the inequality
	\begin{equation} \label{eq2.1.6}
		| \mathcal F(\tuplex) - \nu| < \tau.
	\end{equation}
We write $ \rho_s (\tau , \nu) = \rho_s (\tau , \nu ; \tuplelambda) $ to denote the number of positive integer solutions of (\ref{eq2.1.6}). One anticipates $\rho_s (\tau ,\nu) $ to be large when $ \tau $ is fixed and $ \nu$ is large. Our next result establishes an asymptotic formula for the counting function $ \rho_s (\tau, \nu).$
	
	\begin{theorem} \label{thm2.1.3}
		Suppose that $ \theta > 2 $ is real and non-integral, and that $ \tau \in (0,1]$ is a fixed real number. Suppose further that $ s \geq \left( \lfloor 2\theta  \rfloor +1 \right) \left( \lfloor 2\theta  \rfloor +2 \right) + 1$ is a natural number. Then as $ \nu \to \infty $ one has		
			\[
				\rho_s (\tau ,\nu) = 2 ( \lambda_1 \cdots \lambda_s )^{- 1 / \theta} \frac{ \Gamma \left( 1+ \frac{1}{\theta} \right)^s }{ \Gamma \left( \frac{s}{\theta} \right)} \tau \nu^{s/\theta -1} + o \left( \nu^{s/ \theta -1} \right).
			\]
	\end{theorem}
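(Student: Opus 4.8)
The plan is to establish Theorem~\ref{thm2.1.3} as the large-$\nu$ instance of the positive definite inhomogeneous case of the Davenport--Heilbronn--Freeman method already set up for Theorems~\ref{thm2.1.1} and~\ref{thm2.1.2}. Since $\mathcal F$ is positive definite we may assume $\lambda_i>0$ for all $i$; then any $\tuplex$ counted by $\rho_s(\tau,\nu)$ satisfies $1\le x_i<P:=\bigl((\nu+\tau)/\min_j\lambda_j\bigr)^{1/\theta}\asymp_{\tuplelambda}\nu^{1/\theta}$, because each $\lambda_i x_i^\theta\le\mathcal F(\tuplex)<\nu+\tau$. Hence $\rho_s(\tau,\nu)$ equals the count of $\tuplex\in([1,P]\cap\mathbb Z)^s$ with $|\mathcal F(\tuplex)-\nu|<\tau$, which is precisely the inhomogeneous quantity with the admissible choice $L=\nu\asymp_{\tuplelambda}P^\theta$. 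Writing $f_i(\alpha)=\sum_{1\le x\le P}e(\alpha\lambda_i x^\theta)$ and taking a Selberg majorant/minorant pair $K_\Delta^{\pm}$ --- with $\widehat{K_\Delta^{\pm}}$ supported in $[-\Delta,\Delta]$, $\widehat{K_\Delta^{-}}\le\mathbf{1}_{(-\tau,\tau)}\le\widehat{K_\Delta^{+}}$, and $\int_{\mathbb R}\bigl(\widehat{K_\Delta^{+}}-\widehat{K_\Delta^{-}}\bigr)\ll1/\Delta$ --- one obtains
\[
\int_{-\Delta}^{\Delta}\Bigl(\prod_{i=1}^{s}f_i(\alpha)\Bigr)e(-\alpha\nu)\,K_\Delta^{-}(\alpha)\,\mathrm{d}\alpha\ \le\ \rho_s(\tau,\nu)\ \le\ \int_{-\Delta}^{\Delta}\Bigl(\prod_{i=1}^{s}f_i(\alpha)\Bigr)e(-\alpha\nu)\,K_\Delta^{+}(\alpha)\,\mathrm{d}\alpha,
\]
so it suffices to evaluate the flanking integrals up to $o(\nu^{s/\theta-1})$ for each fixed $\Delta$, and then to let $\Delta\to\infty$.

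Next I would dissect $[-\Delta,\Delta]=\mathfrak M\cup\mathfrak m$ with $\mathfrak M=\{|\alpha|\le P^{-\theta+1-\varepsilon}\}$ and $\mathfrak m$ its complement. The minor arc is handled by exactly the exponential-sum estimates underpinning Theorems~\ref{thm2.1.1}--\ref{thm2.1.2}: because $\theta$ is non-integral, for each fixed $k>\theta$ one has $\tfrac{\mathrm d^k}{\mathrm dt^k}(\alpha\lambda_i t^\theta)\asymp_{\tuplelambda}|\alpha|t^{\theta-k}$, a genuinely non-degenerate expression with no exceptional (rational) set, so van der Corput's method of higher derivatives (or Vinogradov's method) yields a uniform power saving $f_i(\alpha)\ll_{\varepsilon,\Delta}P^{1-\rho}$ on $\mathfrak m$, for some $\rho=\rho(\theta,\varepsilon)>0$. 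Combining one such saving with H\"older's inequality and the essentially optimal mean value estimate $\int_{-\Delta}^{\Delta}|f_i(\alpha)|^{2u}\,\mathrm d\alpha\ll_{\Delta}P^{2u-\theta+\varepsilon}$, with $2u=(\lfloor2\theta\rfloor+1)(\lfloor2\theta\rfloor+2)$, gives $\int_{\mathfrak m}\prod_i|f_i(\alpha)|\,\mathrm d\alpha\ll P^{s-\theta-\rho(s-2u)+\varepsilon}=o(\nu^{s/\theta-1})$; it is here that the hypothesis $s\ge(\lfloor2\theta\rfloor+1)(\lfloor2\theta\rfloor+2)+1>2u$ enters. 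On $\mathfrak M$, Euler--Maclaurin gives $f_i(\alpha)=v_i(\alpha)+O(1)$ with $v_i(\alpha)=\int_0^{P}e(\alpha\lambda_i t^\theta)\,\mathrm dt$, and, using the oscillatory-integral bound $v_i(\alpha)\ll\lambda_i^{-1/\theta}|\alpha|^{-1/\theta}$ (valid since $t\mapsto t^\theta$ has monotone derivative) together with $s/\theta>1$, the resulting contribution is $\int_{\mathfrak M}\prod_i v_i(\alpha)\,e(-\alpha\nu)K_\Delta^{\pm}(\alpha)\,\mathrm d\alpha$ up to an error $o(\nu^{s/\theta-1})$, and the same bound on $v_i$ lets one complete this integral to all of $\mathbb R$ with negligible loss.

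The one step where the present case genuinely differs from Theorems~\ref{thm2.1.1}--\ref{thm2.1.2} is the evaluation of the resulting singular integral. Since $L=\nu$ now lies in the bulk of the range of $\mathcal F$ on $[1,P]^s$ (rather than being a fixed number), the box constraint becomes vacuous, and Fourier inversion identifies $\lim_{\Delta\to\infty}\int_{\mathbb R}\prod_i v_i(\alpha)\,e(-\alpha\nu)K_\Delta^{\pm}(\alpha)\,\mathrm d\alpha$ with $V(\nu+\tau)-V(\nu-\tau)$, where $V(u)=\mathrm{Vol}\{\tuplex\in\mathbb R_{>0}^s:\mathcal F(\tuplex)<u\}$; as $\tau$ is fixed this equals $2\tau V'(\nu)(1+o(1))$. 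The substitution $w_i=\lambda_i x_i^\theta$ turns $V(u)$ into a Dirichlet (Beta) integral,
\[
V(u)=\theta^{-s}(\lambda_1\cdots\lambda_s)^{-1/\theta}\,\frac{\Gamma(1/\theta)^s}{\Gamma(1+s/\theta)}\,u^{s/\theta},
\]
so that, using $\theta^{-1}\Gamma(1/\theta)=\Gamma(1+1/\theta)$ and $(s/\theta)\Gamma(s/\theta)=\Gamma(1+s/\theta)$, differentiation yields
\[
2\tau V'(\nu)=2(\lambda_1\cdots\lambda_s)^{-1/\theta}\,\frac{\Gamma(1+1/\theta)^s}{\Gamma(s/\theta)}\,\tau\,\nu^{s/\theta-1},
\]
exactly the constant in the statement; positivity of the Gamma function makes the leading coefficient positive.

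Finally, for each fixed $\Delta$ the two flanking integrals both equal $\bigl(2(\lambda_1\cdots\lambda_s)^{-1/\theta}\Gamma(1+1/\theta)^s\Gamma(s/\theta)^{-1}\tau+O(1/\Delta)\bigr)\nu^{s/\theta-1}+o_\Delta(\nu^{s/\theta-1})$ as $\nu\to\infty$; squeezing $\rho_s(\tau,\nu)$ between them and letting $\Delta\to\infty$ gives the asserted asymptotic. I expect the main obstacle to be the minor-arc estimate --- extracting a uniform power saving from the fractional-degree exponential sums and coupling it with the mean value bound --- which is however precisely the analytic core already in place for Theorems~\ref{thm2.1.1} and~\ref{thm2.1.2}; the genuinely new computation here, the evaluation of the singular integral as a Dirichlet--Beta integral, is routine and is what produces the Gamma-function constant in place of the integral $C(s,\theta;\tuplelambda)$ appearing in Theorem~\ref{thm2.1.1}.
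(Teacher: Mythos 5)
Your proposal is sound in outline and reaches the same singular-integral computation as the paper (the Dirichlet/Beta integral yielding the $\Gamma$-factor constant, with the same reduction $L=\nu\asymp_{\tuplelambda} P^{\theta}$ and the same minor-arc mechanism coupling a van der Corput pointwise saving with the mean value estimate of Theorem~\ref{thm2.1.4}); but the analytic framework you use to sandwich $\rho_s(\tau,\nu)$ is genuinely different from the paper's. The paper works with the Davenport--Heilbronn weight $w_\tau(\alpha)=\tau\,\text{sinc}^2(\tau\alpha)$, whose Fourier transform is the tent function $\max\{0,1-|x|/\tau\}$, and invokes the Br\"udern--Kawada--Wooley identity (their Lemma 2.1) to squeeze the sharp count between explicit affine combinations of the weighted counts $\rho_s^\star(\,\cdot\,,\,\cdot\,)$; since $w_\tau$ is merely $O(|\alpha|^{-2})$ at infinity, a trivial-arc estimate (Lemma~\ref{lemma2.5.1}) is still required, and the transition parameter is taken to \emph{shrink} with $P$ (they set $\Delta=\tfrac{\tau}{3}P^{-1/100}$), so no double limit is needed. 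You instead propose Selberg-type extremal majorants/minorants with band-limited spectrum; the integral is then literally confined to $[-\Delta,\Delta]$, which eliminates the trivial arcs, and you let $\Delta\to\infty$ \emph{after} $\nu\to\infty$. Both routes work: the paper's reuses Lemmas~\ref{lemma2.4.3} and~\ref{lemma2.5.1} verbatim and avoids any appeal to Selberg's theory, while yours dispenses with the trivial-arc analysis at the cost of introducing the extremal majorant construction and a double limit. Two small cautions on your side: (i) the roles of $K_\Delta^\pm$ and $\widehat{K_\Delta^\pm}$ are swapped in your description --- if the $\alpha$-integral is over $[-\Delta,\Delta]$ then $K_\Delta^\pm$ (not its transform) must be the compactly supported band-limited function, while its transform is the majorant/minorant of $\mathbf 1_{(-\tau,\tau)}$; (ii) the passage from the $\Delta$-truncated singular integral to $V(\nu+\tau)-V(\nu-\tau)$ as $\Delta\to\infty$ hides a uniform-integrability check which you should spell out --- the paper sidesteps this by computing the (absolutely convergent) tent-kernel singular integral in closed form via level sets, without any limiting process.
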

	
A word is in order regarding the conclusions of Theorems \ref{thm2.1.2} and \ref{thm2.1.3}. Though they look similar there is an essential difference between these two conclusions. As we already mentioned, in the situation of Theorem \ref{thm2.1.2}  we count solutions of an inequality inside a box, while in the situation covered by Theorem \ref{thm2.1.3} we aim to "represent" a large positive number by the generalised polynomial $ \mathcal F.$ 
This difference is reflected in the shape of the asymptotic formulae we establish. In the indefinite case we consider boxes of arbitrarily large side length $P,$ while in the definite case covered by Theorem \ref{thm2.1.3}, the main term in the asymptotic formula is limited by the size of the real number $\nu$ we wish to represent, since there is a natural height restriction imposed on a solution $ \tuplex.$ 
This last observation is straightforward. Suppose that $ \lambda_i $ are all positive and suppose that we aim to represent a large positive real number $\nu.$ Choose now $ P = 2 ( \lambda_1^{-1/\theta} + \cdots + \lambda_s^{-1/\theta} +1 ) \nu^{1/\theta}.$ Then for any solution $ \tuplex $ of  (\ref{eq2.1.6}) one has $ x_i \leq P \hspace{0.05in} (1\leq i \leq s).$ As a remark, we draw the attention of the reader to the recent works of Chow \cite{chow_cubes_shifts}, \cite{chow_waring_shifts} and Biggs \cite{biggs_waring_shifts} for the problem of representing a number by shifts of $d$th-powers where $ d \in \mathbb N.$ That is to say, for $ \tau$ a sufficiently large positive real number, they investigate the solubility of the inequality
	\begin{equation*}
		| ( x_1 - \mu_1)^d + \cdots + (x_s - \mu_s)^d - \tau | < \eta,
	\end{equation*}
in integers $ x_i > \mu_i ,$ where $ \mu_i $ are fixed real numbers with $ \mu_1$ being irrational and $ \eta $ being a positive real number.
	
From now on we focus on Theorem \ref{thm2.1.1}. It is possible even at this stage to illustrate the route we take to tackle the problem. For $ \alpha \in \mathbb R$ we define the exponential sum $f(\alpha) = f(\alpha ;P)$ via
	\begin{equation*}
		f(\alpha ; P) = \sum_{ 1 \leq x \leq P} e( \alpha x^\theta),
	\end{equation*}
where $ e(x)$ denotes $ e^{2 \pi i x}.$ As in Freeman's variant of the Davenport-Heilbronn method, we are seeking mean value estimates of the asymptotic shape $ \int_0^1 |f(\alpha)|^s \text d \alpha \ll P^{s-\theta}.$ The key mean value estimate that does the heavy lifting in the proof of Theorem \ref{thm2.1.1} is the following.   

	\begin{theorem} \label{thm2.1.4}
		Suppose that $ \theta > 2 $ is real and non-integral and that $\kappa \geq 1$ is a real number. Suppose further that $ t \geq \frac{1}{2} \left( \lfloor 2\theta  \rfloor +1 \right) \left( \lfloor 2\theta  \rfloor +2 \right) $ is a natural number. Then for any fixed $ \epsilon > 0$ one has
			\begin{equation*}
				\int_{-\kappa}^{\kappa} \left| f(  \alpha) \right|^{2t} \normalfont{ \text{d}} \alpha \ll \kappa  P^{ 2t - \theta + \epsilon}.
			\end{equation*}		
		The implicit constant on the above estimate depends on $ \epsilon, \theta$ and $t,$ but not on $ \kappa$ and $P.$ Furthermore, for $t > \frac{1}{2} \left( \lfloor 2\theta  \rfloor +1 \right) \left( \lfloor 2\theta  \rfloor +2 \right) $ one can take $ \epsilon  =0 .$
	\end{theorem}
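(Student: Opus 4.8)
The plan is to pass from the mean value to a count of solutions of a Diophantine inequality, and then to treat the latter by an iterative dissection argument in the spirit of Vinogradov's mean value method. Expanding the $2t$-th power and integrating term by term over $[-\kappa,\kappa]$ gives
\begin{equation*}
	\int_{-\kappa}^{\kappa}|f(\alpha)|^{2t}\,\text{d}\alpha=\sum_{\tuplex\in([1,P]\cap\mathbb Z)^{2t}}\frac{\sin\bigl(2\pi\kappa\Psi(\tuplex)\bigr)}{\pi\,\Psi(\tuplex)},\qquad \Psi(\tuplex)=\sum_{i=1}^{t}x_i^{\theta}-\sum_{i=t+1}^{2t}x_i^{\theta}.
\end{equation*}
Since $\bigl|\sin(2\pi\kappa u)/(\pi u)\bigr|\le\min\{2\kappa,1/(\pi|u|)\}$, a dyadic decomposition according to the size of $|\Psi(\tuplex)|$ shows that it suffices to prove the bound
\begin{equation*}
	N(V):=\#\bigl\{\tuplex\in([1,P]\cap\mathbb Z)^{2t}:|\Psi(\tuplex)|\le V\bigr\}\ll P^{t+\epsilon}+V\,P^{2t-\theta+\epsilon}
\end{equation*}
for all $V>0$; summing the dyadic pieces and using $t>\theta$ (which is guaranteed by $t\ge\frac12(\lfloor 2\theta\rfloor+1)(\lfloor 2\theta\rfloor+2)$) then recovers the claimed estimate $\kappa P^{2t-\theta+\epsilon}$, and the two terms in the bound for $N(V)$ correspond respectively to the diagonal and to the heuristic count for a genuine fractional-degree inequality.

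To bound $N(V)$ I would dissect $[1,P]$ into $\asymp P/H$ blocks of a common length $H$ and, writing each variable as $x=nH+y$ with $0\le y<H$, Taylor expand $x^{\theta}$ about the left endpoint of its block to degree $m:=\lfloor 2\theta\rfloor+1$, the remainder being $\ll P^{\theta-m-1}H^{m+1}$. This value of $m$ is chosen for two reasons: it is large enough that a suitable choice of $H$ renders the truncation error harmless at the scale $V$; and the system of equations $\sum_{i\le t}y_i^{j}=\sum_{i>t}y_i^{j}$ $(1\le j\le m)$ has degree $m$, whose critical number of variables in Vinogradov's mean value theorem is exactly $m(m+1)/2=\frac12(\lfloor 2\theta\rfloor+1)(\lfloor 2\theta\rfloor+2)$, matching the hypothesis on $t$. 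After the expansion, $|\Psi(\tuplex)|\le V$ forces two things: first, that the leading terms $\sum_{i\le t}(n_iH)^{\theta}-\sum_{i>t}(n_iH)^{\theta}$ be small — this is a copy of the \emph{same} counting problem at the reduced scale $P/H$, to be fed back into the estimate — and second, for each admissible configuration of blocks, a polynomial inequality in the $y$-variables, to be handled by the sharp form of Vinogradov's mean value theorem (Bourgain--Demeter--Guth or Wooley) together with an elementary lattice-point count governing the contribution of the top Taylor coefficient, of size $\asymp P^{\theta-m}$. Iterating this reduction — an induction on the scale — and optimising $H$ in terms of $V$ at each step yields the bound for $N(V)$; the refinement $\epsilon=0$ when $t>\frac12(\lfloor 2\theta\rfloor+1)(\lfloor 2\theta\rfloor+2)$ follows since in that range Vinogradov's theorem has a genuine power saving which absorbs the logarithmic losses of the dyadic and block decompositions.

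The main obstacle I anticipate is that the reduction must be essentially \emph{lossless up to $P^{\epsilon}$}. A single application of the block dissection is too wasteful: estimating the number of admissible block-configurations by fixing all but one of the $n_i$ costs a power of $P$, and the within-block inequality is not itself a balanced Vinogradov system, since the Taylor coefficients $\binom{\theta}{i}(nH)^{\theta-i}$ vary from block to block. It is only by recursing on the block-configuration count — running the whole argument as an induction on the scale, much as in the efficient congruencing treatment of Vinogradov's mean value theorem — that the expected main term $V\,P^{2t-\theta}$ is recovered rather than a larger power of $P$. Arranging the bookkeeping so that the admissible ranges of the parameters close up at each step, while keeping the accumulated truncation errors under control, is the technical heart of the proof; the remaining ingredients — the passage to $N(V)$, the dyadic decomposition, and the appeal to the polynomial mean value theorem — are routine.
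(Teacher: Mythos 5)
Your proposal correctly identifies the three structural pillars of the argument: conversion of the mean value to a count of solutions of $|\Psi(\tuplex)|$ small, Taylor expansion of $x^\theta$ to degree $k=\lfloor 2\theta\rfloor+1$, and an appeal to the sharp Vinogradov mean value theorem for the resulting degree-$k$ system. But the centrepiece of your plan --- an induction on scale in the efficient-congruencing style, with a free block length $H$ to optimise and a recursive feeding-back of the block-configuration count --- is precisely the part you flag as unresolved, and that unresolved ``technical heart'' is a genuine gap. You have not shown that the recursion closes up, and in fact no recursion is needed.

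The paper's proof (Theorem \ref{thm2.3.4}) is single-pass. The block length is not a parameter to optimise but is \emph{fixed} at $\sqrt{P}$: with this choice and a Taylor expansion to degree $k=\lfloor 2\theta\rfloor+1$, the remainder is $O(P^{\theta-(k+1)/2})=o(1)$ since $k+1>2\theta$, so the truncation is harmless without iterating. The worry you raise --- that counting block configurations by ``fixing all but one of the $n_i$'' loses a power of $P$ --- is circumvented not by recursion but by H\"older's inequality. After passing to mean values via the Fej\'er-type kernel lemma (Lemma \ref{lemma2.3.2}, used in both directions, which is why a two-sided sandwich like that lemma is needed rather than the raw sinc kernel), H\"older reduces the problem to all $2t$ variables lying in a \emph{single} $\sqrt{P}$-interval, at the cost of a factor $\asymp P^{t-1/2}$. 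The local count in one block is then $\ll P^{t-\theta+1/2+\epsilon}$, obtained by pairing the VMVT bound $J_{t,k}(\sqrt{P})\ll P^{t-\frac14 k(k+1)+\epsilon}$ with an \emph{elementary lattice-point count} for the top Taylor data: Lemma \ref{lemma2.3.3} (Arkhipov--Zhitkov) bounds the number of tuples $\tupleh$ with $|\mathcal H(\tupleh)|<1$ by $\ll P^{\frac14 k(k+1)-\theta+\frac12}$. Multiplying these gives the answer in one shot; there is no induction on scale to set up or to close. This lattice-point lemma is the key ingredient missing from your outline, and it is what replaces the recursion you anticipated.
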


The proof of Theorem \ref{thm2.1.4} proceeds by exploiting the Taylor expansion of the function $ x \mapsto x^\theta.$ The number $ \frac{1}{2} ( \lfloor 2 \theta \rfloor +1) ( \lfloor 2\theta \rfloor +2) $ stems from an application of the Main Conjecture in Vinogradov's mean value theorem to a system of degree $ k = \lfloor 2 \theta \rfloor +1.$  This idea seems to appear first in the work of Arkhipov and Zhitkov \cite{arkhipov_zitkov_warings_probl_non_integr_exp}. We follow the original approach of Arkhipov and Zhitkov. However, our treatment differs from that in \cite[Lemma 3]{arkhipov_zitkov_warings_probl_non_integr_exp} in two aspects. Firstly, we encounter from the very beginning an exponential sum with a smooth phase, while in \cite[Lemma 3]{arkhipov_zitkov_warings_probl_non_integr_exp} the authors deal with an exponential sum whose phase is the integer $\lfloor x^\theta \rfloor.$  Secondly, and most important, our treatment is a refinement of that presented in \cite[Lemma 3]{arkhipov_zitkov_warings_probl_non_integr_exp}. In the latter, the authors obtain an estimate which is $P^{1/2}$ away from the near optimal one. By contrast we establish an essentially optimal estimate in Theorem \ref{thm2.1.4}.
	
As a measure of comparison, note that when $ \theta = d$ is a natural number the latest developments in Vinogradov's mean value theorem by Wooley's Nested Efficient Congruencing method \cite[Corollary 14.7]{wooley_NEC} deliver the bound
	\begin{equation*}
		\int_0^1 | f(\alpha)|^s \text d \alpha \ll P^{s-d},
	\end{equation*}
provided $ s \geq s_0$ where 
	\begin{equation} \label{eq2.1.7}
		s_0 = d^2 -d  + 2 \lfloor \sqrt{  2d +2 } \rfloor - \theta(d),
	\end{equation}
with $\theta (d) $ defined via
	\begin{equation*}
		\theta (d) =
			\begin{cases}
				1, \hspace{0.1in} \text{when} \hspace{0.1in} 2d + 2 \geq \lfloor \sqrt{ 2d +2 }\rfloor^2 + \lfloor \sqrt{ 2d +2 }\rfloor, \\[10pt]
				2,  \hspace{0.1in} \text{when} \hspace{0.1in} 2d + 2 < \lfloor \sqrt{ 2d +2 }\rfloor^2 + \lfloor \sqrt{ 2d +2 }\rfloor. 
			\end{cases}
	\end{equation*}
Making use of the above mean value estimate combined with a Weyl type inequality as in \cite[Lemma 2.3]{wooley_dioph_ineq} one can show that $s_0 +1$ variables suffice to establish the anticipated asymptotic formula for the counting function $ \mathcal N_{s,d}^\tau (P ; \tuplelambda).$ Hitherto, in view of \cite[Theorem 11.3]{brued_kawada_wooley_8_annexe} one had to take $ s \geq 2d^2 $ when $d$ is large. Incorporating  (\ref{eq2.1.7}) into \cite{brued_kawada_wooley_8_annexe} reduces the number of variables needed to establish the asymptotic formula for $ \mathcal N_{s,d}^\tau (P ; \tuplelambda) $ by a factor of $2.$ We briefly mention here the following very interesting statistical result due to Br\"{u}dern and Dietmann. From a measure theoretic point of view, the anticipated asymptotic formula holds for almost all (admissible) real forms $ \lambda_1 x_1^d + \cdots + \lambda_s x_s^d,$ provided we have more than $2d$ variables. More precisely, in \cite{bruedern_dietmann_random_dioph_ineq} it is proven that given $ s > 2d $ variables then  for almost all (in the sense of Lebesgue measure) admissible values of the coefficients, there exists a positive real number $ C(s,d ; \tuplelambda )$ such that for all sufficiently large $P$ one has  
	\begin{equation*}
		\left| \mathcal N_{s,d}^\tau (P) - 2 \tau C(s,d ; \tuplelambda) P^{s-d} \right| <  P^{s-d - 8^{-2d}} ,
	\end{equation*}
uniformly in $ 0 < \tau \leq 1.$ It would be interesting to derive an analogue with the exponent $d$ replaced by an arbitrary positive fractional number $ \theta.$

Lastly, we encounter a weighted version of Theorem \ref{thm2.1.4}. For a sequence of complex numbers $( \mathfrak a_x)_{x \in \mathbb N} $ we write $ f_{\mathfrak a}(\alpha) = f_{\mathfrak a}(\alpha ; P)$ to denote the weighted exponential sum
	\begin{equation*}
		f_{\mathfrak a}(\alpha ; P) = \sum_{ 1 \leq  x \leq P} \mathfrak a_x e(\alpha x^\theta).
	\end{equation*}
Motivated by \cite{wooley_discr_fourier_restr_via_eff_congr} we seek for an inequality
	\begin{equation*}
		\|  f_{\mathfrak a} (\alpha; P)  \|_{L^{2s}} \leq C_{P}  \| \mathfrak a_x \|_{\ell^2},
	\end{equation*}
with the real number  $C_{P}$ depending at $P$ and being uniform in $ ( \mathfrak a_x)_{x \in \mathbb N}.$ Due to the fractional nature of $\theta,$ it is reasonable to expect a connection with Diophatine inequalities of the format (\ref{eq2.1.5}). To do so, one has to detect solutions of inequalities by means of an appropriate kernel function.  For $ \alpha \in \mathbb R $ we define the function
	\begin{equation} \label{eq2.1.8}
		\text{sinc} (\alpha) = 
			\begin{cases}
				\displaystyle \frac{\sin (\pi \alpha) }{ \pi \alpha}, & \hspace{0.1in} \text{when} \hspace{0.1in} \alpha \neq 0, \\[10pt]
				1, & \hspace{0.1in}  \text{when} \hspace{0.1in} \alpha=0,
			\end{cases}
	\end{equation}
and set $ K(\alpha) = \sinc^2 (\alpha)$ as in \cite{davenport-heilbr-ineq}. Our result reads as follows.
	
	\begin{theorem} \label{thm2.1.5}
		Suppose that $ \theta > 2 $ is real and non-integral, and suppose further that  $s \geq 2\left( \lfloor 2\theta  \rfloor+1 \right) \left( \lfloor 2\theta  \rfloor + 2 \right) + 2$ is a natural number. Then one has 	
			\begin{equation*}
				\int_{-\infty}^\infty \left|  f_{\mathfrak a}(\alpha) \right |^{2s} K(\alpha) \normalfont \text d \alpha \ll P^{s- \theta} \left( \sum_{ 1 \leq x \leq P} | \mathfrak a_x |^2 \right)^s.
			\end{equation*}		
	\end{theorem}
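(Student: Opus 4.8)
\emph{Proof proposal.} The plan I would follow is to deduce Theorem~\ref{thm2.1.5} from the unweighted mean value estimate of Theorem~\ref{thm2.1.4} by a soft duality argument. The guiding principle is that a weighted restriction bound of this type is controlled by an \emph{inhomogeneous} counting function for the sum of fractional powers $y_1^\theta+\cdots+y_s^\theta$, and that this quantity is in turn majorised by an ordinary mean value of the sum $f$ to which Theorem~\ref{thm2.1.4} applies. The hypothesis on $s$ gives ample room: the argument needs only an $L^s$ mean value for $f$, and Theorem~\ref{thm2.1.4} supplies this, in its $\epsilon=0$ form, once $s>(\lfloor 2\theta\rfloor+1)(\lfloor 2\theta\rfloor+2)$, which is why the final bound is free of an $\epsilon$ or a logarithmic factor.

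First I would expand the integral by Parseval. Writing $\Phi(\tuplex)=x_1^\theta+\cdots+x_s^\theta$ and $c_{\tuplex}=\mathfrak{a}_{x_1}\cdots\mathfrak{a}_{x_s}$, and noting that $f_{\mathfrak a}(\alpha)^s=\sum_{\tuplex}c_{\tuplex}e(\alpha\Phi(\tuplex))$ with $\tuplex$ ranging over $([1,P]\cap\mathbb Z)^s$, one obtains, with $\Lambda=\widehat K$ the Fej\'er (tent) function,
\begin{equation*}
\int_{-\infty}^{\infty}|f_{\mathfrak a}(\alpha)|^{2s}K(\alpha)\,\mathrm d\alpha=\sum_{\tuplex}\sum_{\tupley}c_{\tuplex}\overline{c_{\tupley}}\,\Lambda\bigl(\Phi(\tuplex)-\Phi(\tupley)\bigr).
\end{equation*}
Since $0\le\Lambda\le\mathbf{1}_{(-1,1)}$, applying $|c_{\tuplex}||c_{\tupley}|\le\tfrac12(|c_{\tuplex}|^2+|c_{\tupley}|^2)$ together with the symmetry $\tuplex\leftrightarrow\tupley$ bounds the right-hand side by $\sum_{\tuplex}|c_{\tuplex}|^2 N(\Phi(\tuplex))$, where $N(v)=\#\{\tupley\in([1,P]\cap\mathbb Z)^s:|\Phi(\tupley)-v|<1\}$. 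As $\sum_{\tuplex}|c_{\tuplex}|^2=\bigl(\sum_{1\le x\le P}|\mathfrak a_x|^2\bigr)^s$, the whole theorem reduces to the uniform estimate $\sup_{v\in\mathbb R}N(v)\ll P^{s-\theta}$.

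To obtain this I would detect the condition $|\Phi(\tupley)-v|<1$ with a non-negative kernel. Since $\sinc$ is decreasing on $[0,1]$ one has $\mathbf{1}_{(-1,1)}(w)\le\tfrac{\pi^2}{4}\sinc^2(w/2)$, and by the Fej\'er identity $\sinc^2(w/2)=2\int_{-1/2}^{1/2}(1-2|\beta|)e(w\beta)\,\mathrm d\beta$; summing over $\tupley$ then gives
\begin{equation*}
N(v)\le\frac{\pi^2}{2}\int_{-1/2}^{1/2}(1-2|\beta|)\,e(-v\beta)\,f(\beta)^s\,\mathrm d\beta\le\frac{\pi^2}{2}\int_{-1}^{1}|f(\beta)|^{s}\,\mathrm d\beta.
\end{equation*}
Here $f$ is precisely the exponential sum of Theorem~\ref{thm2.1.4}, and since $s$ comfortably exceeds $(\lfloor 2\theta\rfloor+1)(\lfloor 2\theta\rfloor+2)$, that theorem with $\kappa=1$ yields $\int_{-1}^{1}|f(\beta)|^s\,\mathrm d\beta\ll P^{s-\theta}$ — directly when $s$ is even, and for odd $s$ after one Cauchy--Schwarz split $|f|^s=|f|^{(s-1)/2}|f|^{(s+1)/2}$ between the admissible even exponents $s-1$ and $s+1$. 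Combining the three displays then proves Theorem~\ref{thm2.1.5}.

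The one step that genuinely requires care is the uniformity in $v$ of the bound for $N(v)$: a priori the values $\Phi(\tupley)$ could cluster in some unit window far more densely than on average, but the kernel detection shows that every such window is governed by the same mean value $\int_{-1}^{1}|f|^s$ that governs the diagonal window, so no loss occurs. Everything else is formal; in particular the customary nuisance of the slow decay of $K$ as $|\alpha|\to\infty$ does not arise, since the Parseval identity already packages all of $\mathbb R$ into the compactly supported multiplier $\Lambda=\widehat K$. The substantive content of the proof is thus entirely contained in Theorem~\ref{thm2.1.4}.
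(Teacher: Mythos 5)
Your proposal is correct, and it takes a genuinely different and in some ways cleaner route than the paper's proof. The paper first groups the tuples $\tuplex$ according to the nearest integer $\ell$ to $x_1^\theta+\cdots+x_s^\theta$, then observes that the tent function $\widehat K$ kills all cross terms with $|\ell_1-\ell_2|\geq 2$, applies Cauchy--Schwarz over the residual blocks $\mathcal B_\theta^\prime(\ell)$, and finally bounds $\max_\ell\#\mathcal B_\theta^\prime(\ell)\ll P^{s-\theta}$ by appealing to the definite-form asymptotic of Theorem~\ref{thm2.1.3}. You dispense with the integer grouping altogether: the arithmetic--geometric inequality $|c_{\tuplex}c_{\tupley}|\le\frac12(|c_{\tuplex}|^2+|c_{\tupley}|^2)$, applied at the level of individual tuples, reduces the problem to the uniform bound $\sup_{v\in\mathbb R}N(v)\ll P^{s-\theta}$, and this in turn is extracted from Theorem~\ref{thm2.1.4} alone (with $\kappa=1$) via Jordan's inequality and the Fej\'er identity. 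Two things are gained by this route. First, Theorem~\ref{thm2.1.3} gives an asymptotic as $\nu\to\infty$, and invoking it for $\max_\ell\#\mathcal B_\theta^\prime(\ell)$ requires some care about uniformity in $\ell$; your detection via $e(-v\beta)$ and the triangle inequality manifestly furnishes a bound uniform in $v\in\mathbb R$ at no extra cost. Second, you need only the mean value estimate (Theorem~\ref{thm2.1.4}) and not the full singular-integral machinery behind Theorem~\ref{thm2.1.3}, so the logical footprint is smaller. The odd/even dichotomy for $\int_{-1}^1|f|^s\,\mathrm d\beta$ is handled correctly by a single Cauchy--Schwarz, given that the hypothesis $s\ge 2(\lfloor 2\theta\rfloor+1)(\lfloor 2\theta\rfloor+2)+2$ leaves the necessary slack for both $s-1$ and $s+1$ to fall in the $\epsilon=0$ regime of Theorem~\ref{thm2.1.4}.
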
 
	
In order to establish Theorem \ref{thm2.1.5} we apply an elementary argument and "double" the number of variables, aiming eventually to reduce to a Diophantine problem of representing a large positive real number by a generalised polynomial $ \mathcal F$ of the shape (\ref{eq2.1.1}). Thus, one would be able to make use of Theorem \ref{thm2.1.3}. This explains the fact that for the inequality recorded in Theorem  \ref{thm2.1.5}, we use twice as many number of variables needed in  Theorem \ref{thm2.1.3}. This is a "cheap" argument. With harder work one could possibly eliminate the factor 2 and half the number of variables needed. This requires more effort and is not the focus of this work. The trick of "doubling" the number of variables is a classical argument in harmonic analysis and goes back to at least Zygmund \cite{zygmund_1974}. More recently, it was used by Bourgain in the papers \cite{bourgain_fourier_transf_restr_I}, \cite{bourgain_fourier_transf_restr_II}, on discrete periodic Strichartz estimates.

\bigskip

\textit{Notation.} Below we collect a few pieces of notation that we use in the rest of the paper. For  $ x \in \mathbb R $ we write $ e(x) $ to denote $ e^{2 \pi i x}$ with $ i = \sqrt{-1}$ being the imaginary unit. For a complex number $z $ we write $ \overline z $ to denote its complex conjugate. For a function $f : \mathbb Z \to \mathbb C $ and for two real numbers $m, M,$ whenever we write
	\begin{equation*}
		\sum_{ m < x \leq M} f(x)
	\end{equation*}
the summation is to be understood over the integers that belong to the interval $(m, M].$ We make use of the standard symbols of Vinogradov and Landau. Namely, when for two functions $f,g$ there exists a positive real constant $ C $ such that $ | f(x) | \leq C | g(x) | $ for all sufficiently large $x$ we write $ f(x) = O (g(x))$ or $ f(x) \ll g (x).$ We write $ f \asymp g$ to denote the relation $ g \ll f \ll g.$  Furthermore, we write $ f(x) = o (g(x)) $ if $ f(x) / g(x) \to 0 $ as $ x \to \infty $ and we write $ f \sim g $ if $ f(x) / g(x) \to 1 $ as $ x \to \infty .$ Depending on the context the implicit constants in the Vinogradov and Landau symbols are allowed to depend on $\lambda_i, L, s, \theta, \tau$ and any $\epsilon > 0$ (which may change from one line to another), whenever such a quantity appears in our estimates. The implicit constants do not depend on $P.$ For a given real number $x$ we shall write $ \lfloor x \rfloor = \max \{ n \in \mathbb Z : n \leq x \}$ to denote the floor function. An expression of the shape $ m < \tuplex \leq M$ where $ m < M$ and $ \tuplex = (x_1, \ldots, x_n)$ is an $n$-tuple, is to be understood as $ m< x_1, \ldots, x_n \leq M.$



\section{Set up } \label{section_set_up_overview}
	
We follow Freeman \cite{freeman_asymp_formula} in making use of appropriate kernel functions that allow one to bound the counting function $ \mathcal N_{s,\theta}^\tau (P)$ from above and below. We make use of the following technical lemma.
	
	\begin{lemma} \label{lemma2.2.1} 
	Fix a positive integer $h.$ Let $a$ and $b$ be real numbers with $0 <a<b.$ Then there is an even real function $K(\alpha) = K(\alpha; a,b)$ such that the function $ \psi$ defined by
		\[
			\psi( \xi) = \int_\mathbb R e( \xi \alpha) K(\alpha) \normalfont \text{d} \alpha
		\]
	satisfies 
		\begin{equation} \label{eq2.2.1}
			\psi(\xi) 			
				\begin{cases}
					\in[0,1] \hspace{0.1in} \text{for} \hspace{0.1in} \xi \in \mathbb R \\[10pt]				
					= 0 \hspace{0.1in} \text{for} \hspace{0.1in} | \xi | \geq b \\[10pt]				
					=1  \hspace{0.1in} \text{for} \hspace{0.1in} | \xi | \leq a.				
				\end{cases}
		\end{equation}	
	Moreover, $K$ satisfies the bound
		\begin{equation} \label{eq2.2.2}
			K(\alpha) \ll_h \min \left\{ b, |\alpha|^{-1}, | \alpha|^{-h-1}(b-a)^{-h} \right\}.
		\end{equation}	
	\end{lemma}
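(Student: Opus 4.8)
\medskip
\noindent\textbf{Proof proposal.} The plan is to realise $\psi$ as a convolution of characteristic functions of intervals, calibrated so that $\psi$ is a spline bump equal to $1$ on $[-a,a]$, supported in $[-b,b]$, whose Fourier transform decays like $|\alpha|^{-h-1}$. Write $\chi_r$ for the characteristic function of $[-r,r]$, put $c=\tfrac{a+b}{2}$ and $\eta=\tfrac{b-a}{h}$, and define
\[
	\psi \;=\; \frac{1}{\eta^{h}}\, \chi_{c} * \underbrace{\chi_{\eta/2} * \cdots * \chi_{\eta/2}}_{h\text{ factors}},
	\qquad
	K(\alpha) \;=\; \int_{\mathbb R} \psi(\xi)\, e(-\xi\alpha)\,\text{d}\xi .
\]
Then $K$ should be the required function, and the verification splits into three parts: the qualitative shape of $\psi$, the size bound on $K$, and the legitimacy of Fourier inversion.

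First I would check (\ref{eq2.2.1}). Setting $g=\eta^{-h}\chi_{\eta/2}*\cdots*\chi_{\eta/2}$, one has $g\ge 0$, $\int_{\mathbb R}g=1$ and $\operatorname{supp}g\subseteq[-\tfrac{h\eta}{2},\tfrac{h\eta}{2}]$, so $\psi=\chi_{c}*g$ is a weighted average of translates of $\chi_{c}$, whence $\psi(\xi)\in[0,1]$ for all $\xi$. Moreover $\psi(\xi)=1$ exactly when $[\xi-\tfrac{h\eta}{2},\xi+\tfrac{h\eta}{2}]\subseteq[-c,c]$, i.e. when $|\xi|\le c-\tfrac{h\eta}{2}=a$, while $\psi(\xi)=0$ once $[\xi-\tfrac{h\eta}{2},\xi+\tfrac{h\eta}{2}]\cap[-c,c]=\emptyset$, i.e. once $|\xi|>c+\tfrac{h\eta}{2}=b$, the endpoint case $|\xi|=b$ following by continuity. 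Since each $\chi_r$ is even and real, $\psi$ — and hence $K$ — is even and real, and $\psi$ is continuous (in fact a $C^{h-1}$ spline), being a convolution of finitely many bounded, compactly supported functions.

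Next I would prove (\ref{eq2.2.2}). Using $\widehat{\chi_r}(\alpha)=\tfrac{\sin(2\pi r\alpha)}{\pi\alpha}$ and the convolution theorem,
\[
	K(\alpha)\;=\;\frac{1}{\eta^{h}}\cdot\frac{\sin(2\pi c\alpha)}{\pi\alpha}\cdot\left(\frac{\sin(\pi\eta\alpha)}{\pi\alpha}\right)^{\!h},
\]
and the three terms of the minimum arise from three elementary ways of estimating this product: bounding every sine by its argument gives $|K(\alpha)|\le 2c=a+b\le 2b$; bounding $|\sin(2\pi c\alpha)|\le 1$ and $|\sin(\pi\eta\alpha)|\le \pi\eta|\alpha|$ gives $|K(\alpha)|\le(\pi|\alpha|)^{-1}$; and bounding every sine by $1$, together with $\eta=(b-a)/h$, gives $|K(\alpha)|\le \pi^{-h-1}h^{h}\,|\alpha|^{-h-1}(b-a)^{-h}$. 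Hence $K(\alpha)\ll_h\min\{b,\,|\alpha|^{-1},\,|\alpha|^{-h-1}(b-a)^{-h}\}$, which is (\ref{eq2.2.2}). Finally, since $h\ge 1$, this bound shows $K$ is bounded near the origin and $O_h(|\alpha|^{-2})$ at infinity, so $K\in L^1(\mathbb R)$; as $\psi\in L^1$ is continuous and $K=\widehat\psi$, the Fourier inversion theorem yields $\psi(\xi)=\int_{\mathbb R} e(\xi\alpha)K(\alpha)\,\text{d}\alpha$ for every $\xi$, as required.

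I do not expect a genuine obstacle here; the only point needing care is the bookkeeping behind the choice of convolution. Taking precisely $h$ copies of an interval of half-length $\eta/2=(b-a)/(2h)$, alongside the single factor $\chi_c$, is exactly what simultaneously pins the support at $[-b,b]$, the plateau at $[-a,a]$, and the decay of $K$ at $|\alpha|^{-h-1}(b-a)^{-h}$; it is moreover precisely what renders $K$ integrable, so that the Fourier inversion used to define $\psi$ from $K$ is valid. The continuity and support statements for $\chi_c * g$ should be checked carefully, but these are entirely standard facts about convolutions.
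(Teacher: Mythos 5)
Your proposal is correct. The paper itself offers no proof of this lemma: it simply cites Freeman's [Lemma~1] from \cite{freeman_asymp_formula}, so strictly speaking there is nothing in the paper to compare against. What you have done is reconstruct, self-contained, the standard convolution (B-spline) argument that underlies Freeman's kernel: convolve $\chi_c$ (with $c=(a+b)/2$) against an $h$-fold convolution power of $\chi_{\eta/2}$ (with $\eta=(b-a)/h$), normalise to make the second factor a probability density, and take $K$ to be the resulting Fourier transform
\[
  K(\alpha)=\frac{1}{\eta^{h}}\cdot\frac{\sin(2\pi c\alpha)}{\pi\alpha}\cdot\left(\frac{\sin(\pi\eta\alpha)}{\pi\alpha}\right)^{h}.
\]
Your bookkeeping is right: the support calculation gives the plateau $[-a,a]$ and the cutoff $[-b,b]$, the three ways of bounding the sinc product yield the three branches of the minimum with a constant that is $\ll_h 1$, and since $h\geq1$ this bound already shows $K\in L^1$, so Fourier inversion applies to the continuous $L^1$ function $\psi$ and gives $\psi(\xi)=\int_{\mathbb R}e(\xi\alpha)K(\alpha)\,\mathrm d\alpha$ pointwise. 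The only place I would add a sentence is your claim that $\psi(\xi)=1$ \emph{exactly} for $|\xi|\leq a$ and $\psi(\xi)=0$ \emph{exactly} for $|\xi|\geq b$: this uses not just that $g$ is supported on $[-h\eta/2,h\eta/2]$ but that $g>0$ on the interior of that interval, which is true for the B-spline but should be stated. Apart from that, this is a clean and complete proof of the lemma, and it is in the same spirit as the construction Freeman uses.
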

	
	\begin{proof}
		This is  \cite[Lemma 1]{freeman_asymp_formula}.
	\end{proof}

Set $ \widetilde \tau = \tau (\log P)^{-1}.$ We can now define the following two kernel functions
	\begin{equation} \label{eq2.2.3}
		K_{-} (\alpha) =  K(\alpha ; \tau - \widetilde \tau , \tau) \hspace{0.1in} \text{and} \hspace{0.1in} K_{+} (\alpha) = K (\alpha ; \tau , \tau + \widetilde \tau).
	\end{equation}
Note that by (\ref{eq2.2.2}) we have
	\begin{equation} \label{eq2.2.4}
		K_{\pm}  (\alpha) \ll_{\tau,h} \min \{1, | \alpha|^{-1}, \left( \log P \right)^h |\alpha|^{-h-1} \}.
	\end{equation}
	
The estimate (\ref{eq2.2.4}) is essential in the disposal of the set of trivial arcs. We make use of this for a particular choice of $h$ to be chosen at a later stage. We refer to $K_+, K_-$ as the upper and lower kernel respectively. The Fourier transform of $K_+$ provides us with an upper bound for $ \mathcal N_{s,\theta}^\tau(P)$ while the Fourier transform of $K_-$ provides a lower bound. To see this, let us write $\chi_\tau (\xi)$ for the indicator function of the interval $(-\tau, \tau),$ namely
	\begin{equation} \label{eq2.2.5}
		\chi_\tau (\xi) = 			
		\begin{cases}
			1, \hspace{0.1in} \text{when} \hspace{0.1in} | \xi | < \tau, \\				
			0, \hspace{0.1in} \text{when} \hspace{0.1in} | \xi| \geq \tau.
		\end{cases}	
	\end{equation}	
By (\ref{eq2.2.1}) one has that 
	\begin{equation*}
		\chi_{ \tau - \widetilde \tau } (\xi) \leq \int_{- \infty}^\infty e(\alpha \xi) K_{-} (\alpha) \text d \alpha \leq \chi_{ \tau} (\xi)
	\end{equation*}
	and
	\begin{equation*}
		\chi_{ \tau } (\xi) \leq \int_{- \infty}^\infty e(\alpha \xi) K_{+} (\alpha) \text d \alpha \leq \chi_{ \tau + \widetilde \tau} (\xi).	
	\end{equation*}
Consequently, one has
	\begin{equation}  \label{eq2.2.6}
		\int_{-\infty}^\infty e( \xi \alpha) K_{-}( \alpha) \text{d} \alpha \leq \chi_\tau (\xi) \leq \int_{-\infty}^\infty  e( \xi \alpha) K_{+}( \alpha) \text{d} \alpha.
	\end{equation}	
	
We take a moment to point out that the expression
	\begin{equation} \label{eq2.2.7}
		\left| \int_{-\infty}^\infty  e( \xi \alpha) K_{\pm}( \alpha) \text{d} \alpha - \chi_\tau (\xi) \right|
	\end{equation}
is zero when $ | | \xi| - \tau | > \widetilde \tau$ and at most $1$ for values of $ \xi$ such that $ | |\xi| - \tau | \leq \widetilde \tau .$ 
	
We are now equipped to explain how we sandwich the counting function $ \mathcal N_{s,\theta}^\tau(P).$ Recall that 
	\begin{equation*}
		f(\alpha) = \sum_{1 \leq x \leq P} e(\alpha x^\theta).
	\end{equation*}
We write $ f_i (\alpha) = f( \lambda_i \alpha)\hspace{0.05in} (1\leq i \leq s) $ and put
	\begin{equation} \label{eq2.2.8}
		R_{\pm} (P) = \int_{-\infty}^\infty f_1 ( \alpha) \cdots f_s(\alpha) K_{\pm} (\alpha) \text{d} \alpha.
	\end{equation}
Take now $ \xi = \lambda_1 x_1^\theta + \cdots + \lambda_s x_s^\theta $ in (\ref{eq2.2.1}). If we sum over $ 1 \leq \tuplex \leq  P$ and take into account (\ref{eq2.2.5}) and (\ref{eq2.2.6}), we obtain
	\begin{equation*}
		R_+ (P) \geq \sum_{ \substack{ 1 \leq  \tuplex \leq  P \\ | \mathcal F (\tuplex) | < \tau }} 1 = \Ncal,
	\end{equation*}
and
	\begin{equation*}
		R_- (P) \leq \sum_{ \substack{ 1 \leq \tuplex \leq  P \\ | \mathcal F (\tuplex) | < \tau }} 1 = \Ncal.
	\end{equation*}
Thus, we conclude that
	\begin{equation*}
		R_{-}(P) \leq  \Ncal \leq R_{+}(P).
	\end{equation*}
From the inequality above it is clear that in order to establish an asymptotic formula for $ \mathcal N_{s,\theta}^\tau (P)$ it suffices to obtain asymptotic formulae for the integrals $R_{\pm}(P)$ that are asymptotically equal. 

We now fix some notation. Put $ \gamma = \theta -\lfloor \theta \rfloor \in (0,1).$  We set
	\begin{equation} \label{eq2.2.9}
		\delta_0  = 4^{-\theta} \hspace{0.1in} \text{and} \hspace{0.1in} \omega = \min \left\{\frac{1-\gamma}{12}, 5^{-100 \theta} \right\}.
	\end{equation} 
We dissect the real line into three disjoint subsets as follows.
	\begin{itemize}
		\item[(i)] The major arc $ \mathfrak M $ around $0$ given by	
			\begin{equation*}
				\mathfrak M = \left\{ \alpha \in \mathbb R : | \alpha | < P^{-\theta + \delta_0} \right\}.	
			\end{equation*}
		\item[(ii)]  The minor arcs $ \mathfrak m $ given by	
			\begin{equation*}
				\mathfrak m = \left\{ \alpha \in \mathbb R : P^{-\theta + \delta_0}  \leq | \alpha | < P^{ \omega } \right\}.	
			\end{equation*}	
		\item[(iii)] The trivial arcs $ \mathfrak t $ given by
			\begin{equation*}
				\mathfrak t = \left\{ \alpha \in \mathbb R :  | \alpha | \geq  P^{ \omega } \right\}.
			\end{equation*}		
	\end{itemize}	
	
For a Lebesgue measurable set $ \mathcal B \subset \mathbb R $ we define
	\begin{equation} \label{eq2.2.10}
		R_\pm (P ; \mathcal B) = \int_{\mathcal B} f_1(\alpha) \cdots f_s (\alpha) K_\pm (\alpha) \text d \alpha.
	\end{equation}
So, by (\ref{eq2.2.8}) one has
	\begin{equation} \label{eq2.2.11}
		R_\pm (P) = R_\pm( P ; \mathfrak M) + R_\pm (P ; \mathfrak m) + R_\pm (P ; \mathfrak t).
	\end{equation}



\section{An auxiliary mean value estimate} \label{section_key_estimate_proof}

In this section we prove Theorem \ref{thm2.1.4}. To do so, we first collect some auxiliary results that we need in our proof.

For $t,k \in \mathbb N$ we define the mean value
	\begin{equation*}
		J_{t,k} (P) = \int_{[0,1)^k} \left| \sum_{ 1 \leq x \leq P} e (\alpha_1 x + \cdots + \alpha_k x^k) \right|^{2t} \text d \tuplealpha.
	\end{equation*}
By orthogonality, one has that $J_{t,k} (P) $ counts the number of integer solutions of the system 
	\begin{equation*} 
	\sum_{i=1}^t ( x_i^j - x_{t+i}^j) =0 \hspace{0.1in} (1 \leq j \leq k),
	\end{equation*}	
with $ \tuplex \in [1, P ]^{2t}. $ The study of the mean value $J_{t,k}(P)$ goes back to the mid 1930's and Vinogradov \cite{vinogradovVMVT}. The central problem here is to find upper bounds for $J_{t,k}(P).$ The Main Conjecture in Vinogradov's mean value theorem, now a theorem after the work of Wooley \cite{wooley_VMVT_cubic} for $k=3,$ and Bourgain, Demeter and Guth \cite{bourgain_VMVT_proof}, for $ k \geq 4,$ reads as follows.
	
	\begin{theorem} \label{thm2.3.1}
		Suppose that $ t \geq \frac{1}{2}k (k+1)$ is a natural number. Then for any fixed $ \epsilon > 0 $ one has
			\begin{equation*}
				J_{t,k}(P) \ll P^{t+ \epsilon} + P^{2t - \frac{1}{2}k(k+1)}.
			\end{equation*}	
	\end{theorem}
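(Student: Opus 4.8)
The statement is the Main Conjecture in Vinogradov's mean value theorem at and beyond the critical exponent, so the plan is not to give a self-contained argument but to record the shape of the known proof and to invoke the two references cited above: Wooley's nested efficient congruencing \cite{wooley_VMVT_cubic} settles the case $k=3$, and the $\ell^2$-decoupling theorem of Bourgain, Demeter and Guth \cite{bourgain_VMVT_proof} settles $k\geq 4$. First I would reduce to the critical exponent $t=t_0:=\tfrac{1}{2}k(k+1)$ (a natural number, since $k(k+1)$ is even). Writing $g(\tuplealpha)=\sum_{1\leq x\leq P}e(\alpha_1 x+\cdots+\alpha_k x^k)$, orthogonality gives $J_{t,k}(P)=\int_{[0,1)^k}|g(\tuplealpha)|^{2t}\,\mathrm{d}\tuplealpha$, and since $|g(\tuplealpha)|\leq P$ one has $J_{t,k}(P)\leq P^{2(t-t_0)}J_{t_0,k}(P)$ for every $t\geq t_0$; thus the critical bound $J_{t_0,k}(P)\ll_\epsilon P^{t_0+\epsilon}$ propagates, after a routine refinement at supercritical exponents to remove the residual $\epsilon$ there, to the claimed estimate $J_{t,k}(P)\ll P^{t+\epsilon}+P^{2t-t_0}$ for all $t\geq t_0$, the two exponents coinciding (up to $\epsilon$) exactly at $t=t_0$. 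That this is sharp follows from the $P^{t}$ diagonal solutions $x_i=x_{t+i}$ together with a Hölder, or major-arc, lower bound of size $P^{2t-t_0}$.

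The heart of the matter is therefore the critical bound $J_{t_0,k}(P)\ll_\epsilon P^{t_0+\epsilon}$, equivalently that the least $\Lambda\geq 0$ with $J_{t_0,k}(P)\ll P^{t_0+\Lambda+\epsilon}$ is $\Lambda=0$. On the efficient congruencing route one fixes an auxiliary prime $p\asymp P^{\eta}$ with $\eta$ small and, for blocks of variables confined to prescribed residue classes modulo powers of $p$, studies suitably normalised solution counts $I_{a,b}(P)$ of the Vinogradov system with one block frozen modulo $p^{a}$ and a second modulo $p^{b}$. The crucial input is the \emph{congruencing} step, which bounds $I_{a,b}$ in terms of $I_{b,\,kb}$ with a quantifiable arithmetic gain; this gain rests on the elementary fact (Vandermonde determinant, or Newton's identities) that the power sums $\sum_{i=1}^{k}x_i^{j}$ for $1\leq j\leq k$ determine the multiset $\{x_1,\dots,x_k\}$ modulo $p$ whenever the $x_i$ are distinct mod $p$, so that such a system of congruences in $k$ variables has $O(1)$ solutions mod $p$. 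Iterating the congruencing step a bounded number of times, inserting the result into the trivial estimate $J_{t_0,k}(P)\ll P^{2t_0}$, and optimising in $\eta$ and in the number of iterations produces a self-improving inequality of the shape $\Lambda\leq c\,\Lambda$ with a constant $c<1$, whence $\Lambda=0$. The decoupling route is structurally parallel: the arithmetic congruencing step is replaced by the geometric ball-inflation (multilinear Kakeya) inequality for the moment curve $\{(t,t^{2},\dots,t^{k}):t\in[0,1]\}$, and one establishes $\ell^2$-decoupling for that curve at the critical exponent $p=k(k+1)$, from which the mean value bound follows by the standard transference.

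The genuinely hard part, on either route, is the induction that converts a per-step gain into the closed conclusion $\Lambda=0$: in the decoupling proof this is the interlocking induction on both the scale $\delta$ and the dimension, with lower-dimensional decoupling feeding the ball-inflation estimate inside a Bourgain--Guth multilinear-to-linear scheme; in efficient congruencing it is the delicate bookkeeping of how the near-doubling of the congruence modulus at each step trades against the power of $P$. For $k=3$ there is the further obstruction that the naive congruencing iteration loses slightly too much to reach the endpoint, which is precisely why Wooley's \emph{nested} version is needed. Since all of this is carried out in full in the cited works, in the present paper we simply invoke them.
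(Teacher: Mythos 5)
Your proof takes essentially the same approach as the paper: both defer to the known resolution of the Main Conjecture in Vinogradov's mean value theorem, citing Wooley's cubic paper \cite{wooley_VMVT_cubic} for $k=3$ and Bourgain--Demeter--Guth \cite{bourgain_VMVT_proof} for $k\geq 4$, and both merely record (rather than reprove) the efficient-congruencing/decoupling machinery. The one minor difference is that the paper cites \cite[Corollary 1.3]{wooley_NEC} directly for the sharp supercritical form with no $\epsilon$ on $P^{2t-\frac{1}{2}k(k+1)}$, whereas you describe removing the residual $\epsilon$ at supercritical exponents as a ``routine refinement'' without providing the argument or a reference; citing \cite{wooley_NEC} there would close that small gap.
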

	
	\begin{proof}
		See \cite[Corollary 1.3]{wooley_NEC}. An estimate weaker by a factor $ P^\epsilon$ can be found in \cite[Theorem 1.1]{wooley_VMVT_cubic} for $k=3$ and in \cite[Theorem 1.1]{bourgain_VMVT_proof} for $ k \geq 4.$
	\end{proof} 
	
In the proof of Theorem \ref{thm2.1.4}, we deal repeatedly with inequalities of the shape 
	\begin{equation} \label{eq2.3.1}
		| x_1^\theta + \cdots + x_t^\theta - x_{t+1}^\theta - \cdots  - x_{2t}^\theta | < \delta,
	\end{equation}
where $  \delta > 0$ is a fixed real number. Due to the fact that $ \theta $ is not an integer, one cannot count directly the solutions via the usual orthogonality relation over the interval $[0,1).$ As a surrogate, we make use of an auxiliary lemma which is a variant of \cite[Lemma 2.1]{watt-exp-sums-riemann-zeta-fnct2}. In order to state the lemma we first introduce some notation. Suppose that $I_1, I_2 \subset (0, \infty)$ are finite intervals, and suppose further that $ \mathcal S \subset \mathbb Z^2$ is a finite set of lattice points. We write $V_t(I_1, I_2 ; \delta)$ to denote the number of integer solutions of inequality (\ref{eq2.3.1}) with $ x_1, x_{t+1} \in I_1$ and $ x_i \in I_2$ for all $i \neq 1, t+1.$ Similarly, we write $V_t (\mathcal S, I_2 ; \delta)$ to denote the number of integer solutions of inequality (\ref{eq2.3.1}) with $ (x_1, x_{t+1}) \in \mathcal S$ and $ x_i \in I_2$ for all $i \neq 1, t+1.$ For $ \alpha \in \mathbb R $ and $i=1,2$ we put $H_i (\alpha) = H(\alpha ; I_i),$ where
	\begin{equation*}
		H (\alpha ; I_i) = \sum_{ x \in I_i} e(\alpha x^\theta).
	\end{equation*} 
Moreover, we write
	\begin{equation*}
		H_{\mathcal S} (\alpha) = \sum_{(x_1, x_{t+1}) \in \mathcal S} e\left(\alpha(x_1^\theta - x_{t+1}^\theta) \right).
	\end{equation*}
The lemma now reads as follows. We note here that if $ I_1 = I_2,$ then our result in $(ii)$ is a special case of \cite[Lemma 2.1]{watt-exp-sums-riemann-zeta-fnct2} with $ K=1$ and $ \omega = x^\theta $ in their notation.
	
	\begin{lemma} \label{lemma2.3.2}
		Define the number $ \Delta $ via the relation $ 2 \Delta \delta =1.$ 
		\begin{itemize}
			\item[(i)] One has
			\begin{equation*}
				V_t (\mathcal S, I_2; \delta) \ll\delta \int_{ - \Delta }^{\Delta} \left| H_{\mathcal S} (\alpha) H_2(\alpha)^{2t-2} \right| \normalfont{\text d} \alpha.
			\end{equation*}
			\item [(ii)] One has
			\begin{equation*}
				\delta \int_{ - \Delta }^{\Delta} \left| H_1 (\alpha)^2 H_2(\alpha)^{2t-2} \right| \normalfont{\text d} \alpha \ll V_t (I_1, I_2 ; \delta) \ll \delta \int_{ - \Delta }^{\Delta} \left| H_1 (\alpha)^2 H_2(\alpha)^{2t-2} \right| \normalfont{\text d} \alpha .
			\end{equation*}
		\end{itemize}
		The implicit constants in the above estimates are independent of $I_1, I_2, \mathcal S, \theta$ and $ \delta.$
	\end{lemma}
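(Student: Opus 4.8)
The plan is to detect the solutions counted by $V_t$ by means of a Fourier kernel on $\mathbb{R}$, in the spirit of the Davenport--Heilbronn device \cite{davenport-heilbr-ineq} and of Watt \cite[Lemma~2.1]{watt-exp-sums-riemann-zeta-fnct2}: both parts reduce to choosing a suitable non-negative majorant (for the upper bounds) and a suitable non-negative minorant (for the lower bound). Write $\widehat g(\xi) = \int_{\mathbb R} g(\alpha)\,e(\xi\alpha)\,\text{d}\alpha$, put $y = (x_1^\theta - x_{t+1}^\theta) + \sum_{i=2}^{t} x_i^\theta - \sum_{i=t+2}^{2t} x_i^\theta$ so that \eqref{eq2.3.1} reads $|y| < \delta$, and let $\sum_{\tuplex}$ denote summation over all $2t$-tuples $\tuplex = (x_1,\dots,x_{2t})$ that are admissible in the relevant sense (namely $(x_1,x_{t+1})\in\mathcal S$ and $x_i\in I_2$ for $i\neq 1,t+1$ in part~(i), and $x_1,x_{t+1}\in I_1$ with $x_i\in I_2$ for $i\neq 1,t+1$ in part~(ii)), \emph{without} any constraint on $y$. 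The one structural input is the factorisation $\sum_{\tuplex} e(\alpha y) = H_{\mathcal S}(\alpha)\,|H_2(\alpha)|^{2t-2}$ in part~(i) and $\sum_{\tuplex} e(\alpha y) = |H_1(\alpha)|^2\,|H_2(\alpha)|^{2t-2}$ in part~(ii); since $H_1, H_2, H_{\mathcal S}$ are finite exponential sums, hence bounded, every exchange below of a finite sum with an absolutely convergent integral is legitimate.

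For part~(i) I would take the Fej\'er-type majorant $m(\alpha) = \max\{0,\Delta-|\alpha|\}$, supported in $[-\Delta,\Delta]$, with $\widehat m(\xi) = \Delta^2\sinc^2(\Delta\xi) \ge 0$. Because $2\Delta\delta = 1$, any $\xi$ with $|\xi| < \delta$ has $|\Delta\xi| < \tfrac12$, where $\sinc^2 \ge \sinc^2(\tfrac12) = 4/\pi^2$; so $V_t(\mathcal S,I_2;\delta) = \sum_{\tuplex\,:\,|y|<\delta}1 \le \tfrac{\pi^2}{4\Delta^2}\sum_{\tuplex}\widehat m(y)$, using $\widehat m\ge 0$ on the right. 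Expanding $\widehat m(y) = \int_{-\Delta}^{\Delta}m(\alpha)e(\alpha y)\,\text{d}\alpha$, interchanging the finite sum with the integral, inserting the factorisation (the resulting integral being real and non-negative, as it equals $\tfrac{\pi^2}{4\Delta^2}\sum_{\tuplex}\widehat m(y)$), and finally bounding $0 \le m(\alpha) \le \Delta$, one obtains
\[
	V_t(\mathcal S, I_2;\delta) \;\le\; \frac{\pi^2}{4\Delta^2}\int_{-\Delta}^{\Delta} m(\alpha)\,H_{\mathcal S}(\alpha)\,|H_2(\alpha)|^{2t-2}\,\text{d}\alpha \;\le\; \frac{\pi^2}{4\Delta}\int_{-\Delta}^{\Delta}\bigl|H_{\mathcal S}(\alpha)\,H_2(\alpha)^{2t-2}\bigr|\,\text{d}\alpha ,
\]
and since $\tfrac{1}{4\Delta} = \tfrac\delta2$ this is exactly the bound asserted in~(i), with a numerical implied constant. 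The upper bound in~(ii) is then the special case $\mathcal S = (I_1 \cap \mathbb Z)^2$, for which $H_{\mathcal S}(\alpha) = |H_1(\alpha)|^2$.

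The part that is not completely routine is the lower bound in~(ii): I need a single function $\phi$ on $\mathbb R$ that is non-negative everywhere, at least $1$ on the whole window $[-\Delta,\Delta]$ (of length $2\Delta = 1/\delta$), and whose transform $\widehat\phi$ is supported strictly inside $(-\delta,\delta)$. Unwinding the relation $2\Delta\delta = 1$, a rescaled square sinc does this: with a numerical constant $A$ (say $A = 2$), $\phi(\alpha) = A\,\sinc^2(\delta\alpha/2)$ has $\widehat\phi(\xi) = \tfrac{2A}{\delta}\max\{0,\,1 - 2|\xi|/\delta\}$, which is non-negative, bounded by $2A/\delta$, and supported in $[-\delta/2,\delta/2]\subset(-\delta,\delta)$, while for $|\alpha|\le\Delta$ one has $|\delta\alpha/2|\le\tfrac14$, where $\sinc^2$ is decreasing, so $\phi(\alpha)\ge A\,\sinc^2(\tfrac14) = 8A/\pi^2 \ge 1$. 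As $|H_1|^2|H_2|^{2t-2}\ge 0$ pointwise, this yields
\[
	\int_{-\Delta}^{\Delta}|H_1(\alpha)|^2|H_2(\alpha)|^{2t-2}\,\text{d}\alpha \;\le\; \int_{\mathbb R}\phi(\alpha)\,|H_1(\alpha)|^2|H_2(\alpha)|^{2t-2}\,\text{d}\alpha \;=\; \sum_{\tuplex}\widehat\phi(y) \;\le\; \frac{2A}{\delta}\,V_t(I_1,I_2;\delta) ,
\]
the last step because $0\le\widehat\phi\le 2A/\delta$ and $\widehat\phi(y) = 0$ unless $|y|\le\delta/2$ (so in particular $|y|<\delta$, and $\tuplex$ is counted by $V_t$); multiplying by $\delta$ gives the lower bound in~(ii).

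I expect the construction of $\phi$ to be the only delicate point, since it asks one function to be simultaneously positive, bounded below on a window whose length is the reciprocal of the permitted bandwidth, and Fourier-supported strictly inside $(-\delta,\delta)$; it is precisely the arithmetic $2\Delta\delta=1$ that makes the rescaled $\sinc^2$ fit, and everything else is the standard detection computation. Finally, all the implied constants produced this way ($\pi^2/2$ in~(i) and $2A$ in~(ii)) are numerical, hence independent of $I_1, I_2, \mathcal S, \theta$ and $\delta$ as the lemma requires, and since the argument never invokes any relation between $I_1$ and $I_2$, it specialises, when $I_1 = I_2$, to Watt's lemma.
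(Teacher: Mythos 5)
Your proposal is correct, and it is the same argument as the paper's: both parts detect the inequality $|\sigma_{t,\theta}(\tuplex)|<\delta$ using the Fej\'er/triangle Fourier pair $\sinc^2 \leftrightarrow \max\{0,1-|\cdot|\}$ together with Jordan's inequality to get the numerical bound $4/\pi^2$, and then pass to the stated integrals by expanding the generating functions and swapping sum with integral. You phrase it as constructing an explicit non-negative majorant $m$ and minorant $\phi$, whereas the paper applies the pointwise bounds $\tfrac{\pi^2}{4}K(\xi)>1$ and $\Lambda(2\xi)<1$ directly and then substitutes the integral representations; these are the same computation (your $\tfrac{\pi^2}{4\Delta^2}\widehat m(y)$ is exactly the paper's $\tfrac{\pi^2}{4}K\bigl(\sigma_{t,\theta}(\tuplex)/(2\delta)\bigr)$), the only cosmetic difference being that your minorant's transform is supported in $[-\delta/2,\delta/2]$ rather than $[-\delta,\delta]$, which changes nothing.
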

	
	\begin{proof}
	The argument proceeds as in \cite[Lemma 2.1]{watt-exp-sums-riemann-zeta-fnct2}. For $ x \in \mathbb R $ we define the functions
		\begin{equation*}
			K(\alpha) = \sinc^2 (\alpha) \hspace{0.3in} \text{and} \hspace{0.3in}	\Lambda (x) = \max \{ 0, 1 - | x| \},
		\end{equation*}	
	where recall from (\ref{eq2.1.8}) the definition of the $\sinc$ function. It is well known, one may see for example in \cite[Lemma 20.1]{davenport_book}, that for $ x, \xi \in \mathbb R $ one has			
		\begin{equation}  \label{eq2.3.2}
			K (\xi ) = \int_{- \infty}^\infty e(-x \xi) \Lambda (x) \text d x \hspace{0.1in}  \text{and} \hspace{0.1in} \Lambda (x) = \int_{- \infty}^\infty e (x \xi) K(\xi ) \text d \xi.
		\end{equation}	
	We make use of Jordan's inequality, which states that for $ 0 < x \leq \frac{\pi}{2}$ one has
		\begin{equation*}
			\frac{2}{ \pi} \leq \frac{\sin x}{x} < 1,
		\end{equation*}
		where the equality holds only if $ x = \pi / 2.$ For a proof of this inequality see \cite[p. 33]{mitrinovic_book}. Note here that for $ | \alpha| < \frac{1}{2}$ one has $K(\alpha) > 4/ \pi^2.$ 
		
	For ease of notation we set
		\begin{equation*}
			\sigma_{t,\theta} (\tuplex) = x_1^\theta + \cdots - x_{2t}^\theta \hspace{0.2in} \text{and} \hspace{0.2in} \xi = \frac{1}{2 \delta} \sigma_{t,\theta} (\tuplex).
		\end{equation*}	
		
	We first prove the upper bound in $(i).$ Let $ \tuplex$ be a tuple counted by $V_t \left( \mathcal S, I_2 ; \delta \right).$ By Jordan's inequality one has 
		\begin{equation*}
			\frac{\pi^2}{4} K( \xi) > 1.
		\end{equation*}
	Hence
		\begin{equation*}
			\begin{split}
				V_t \left( \mathcal S, I_2 ; \delta \right) \leq  \frac{ \pi ^2}{4} \sum_{\tuplex} K (\xi),
			\end{split}
		\end{equation*}
	where the summation is over tuples $ \tuplex$ with $ (x_1 ,x_{t+1}) \in \mathcal S$ and $ x_i \in I_2 \hspace{0.05in} (i \neq 1, t+1).$ Using now (\ref{eq2.3.2}) and making a change of variables by setting $ u = 2 \delta \alpha$ we successively obtain
		\begin{equation*}
			\begin{split}
				V_t \left( \mathcal S, I_2 ; \delta \right)  \leq \frac{ \pi^2}{4} \sum_{\tuplex} \int_{ -\infty}^{\infty} e (u \xi) \Lambda(- u) \text d u  = \frac{ \pi^2 \delta}{2} \sum_{\tuplex} \int_{ -\infty}^{\infty} e (\alpha \sigma_{t,\theta} (\tuplex)) \Lambda(- 2 \delta \alpha) \text d \alpha.
			\end{split}
		\end{equation*}
	One can interchange the order of integration with that of summation. This is valid since the integral is absolutely convergent and we have a finite sum. Note here that
		\begin{equation*}
			\sum_{\tuplex} e (\alpha \sigma_{t,\theta} (\tuplex)) = H_{\mathcal S} (\alpha) H_2(\alpha)^{2t-2}.
		\end{equation*}
	Moreover, for $ | \alpha| >  \frac{1}{2 \delta} $ one has  $ \Lambda( - 2 \delta \alpha) =0.$ Hence, by the triangle inequality we conclude that
		\begin{equation*}
			\begin{split}	
				V_t \left( \mathcal S, I_2 ; \delta \right)  & \leq \frac{ \pi^2 \delta}{2} \int_{ -\infty}^{\infty} \left| H_{\mathcal S} (\alpha) H_2(\alpha)^{2t-2} \right|  \Lambda(- 2 \delta \alpha) \text d \alpha \\[10pt]
				& \ll \delta \int_{ - \Delta}^{ \Delta} \left| H_{\mathcal S} (\alpha) H_2(\alpha)^{2t-2} \right| \text d \alpha.
			\end{split}
		\end{equation*}
		
	Next we prove $(ii).$ In order to establish the upper bound one may argue as in $(i),$ whereas now we make use of the product $H_1(\alpha)^2 H_2(\alpha)^{2t-2}.$ We give the proof of the lower bound. Let $ \tuplex$ be a tuple counted by $V_t \left(I_1, I_2 ; \delta \right).$ Then one has
		\begin{equation*}
			0 < \Lambda (2 \xi) < 1.
		\end{equation*}
	Thus, summing over $\tuplex$ with $ x_1,x_2 \in I_1$ and $ x_i \in I_2 \hspace{0.05in} (i \neq 1, t+1),$ and using (\ref{eq2.3.2}) we obtain
		\begin{equation*}	
			V_t \left( I_1, I_2 ; \delta \right)  \geq \sum_{\tuplex} \Lambda (2 \xi).
		\end{equation*}
	Invoking again (\ref{eq2.3.2}) and making a change of variables by setting $ u = \delta \alpha$ we successively obtain
		\begin{equation*} 
			\begin{split}
				V_t \left( I_1, I_2 ; \delta \right)  \geq \sum_{\tuplex } \int_{- \infty}^{\infty} e(2u \xi) K(u) \text d u 
				=  \delta \sum_{ \tuplex} \int_{- \infty}^{\infty} e(\alpha \sigma_{t, \theta} (\tuplex) ) K( \delta \alpha) \text d \alpha.
			\end{split}
		\end{equation*}
	Since we assume that $x_1, x_{t+1} \in I_1$ one has
		\begin{equation*}
			\sum_{\tuplex}  e(\alpha \sigma_{t, \theta} (\tuplex) )  = \left| H_1 (\alpha)^2 H_2(\alpha)^{2t-2} \right|.
		\end{equation*}	
	Changing the order of summation and integration the preceding inequality now delivers
		\begin{equation} \label{eq2.3.3}
			V_t \left( I_1, I_2 ; \delta \right)  \geq  \delta \int_{- \infty}^{\infty}  \left| H_1 (\alpha)^2  H_2(\alpha)^{2t -2} \right|  K( \delta \alpha) \text d \alpha.
		\end{equation}
	Next, using again Jordan's inequality and the positivity of the integrand we obtain
		\begin{equation*}
			\int_{- \infty}^{\infty} \left| H_1 (\alpha)^2  H_2(\alpha)^{2t -2} \right| K( \delta \alpha) \text d \alpha \geq  \frac{4}{ \pi^2} \int_{ - \Delta}^{\Delta}  \left| H_1 (\alpha)^2  H_2(\alpha)^{2t -2} \right| \text d \alpha.
		\end{equation*}
	Incorporating the above into (\ref{eq2.3.3}) yields 
		\begin{equation*}
			V_t \left( I_1, I_2 ; \delta \right) \gg \delta \int_{ - \Delta}^{\Delta}  \left| H_1 (\alpha)^2 H_2(\alpha)^{2t -2} \right| \text d \alpha,
		\end{equation*}
		which completes the proof.
	\end{proof}

From now on we set $ k = \lfloor 2 \theta \rfloor +1$ and for $ 1 \leq j \leq k $ we define the binomial coefficients
	\begin{equation*}
		b_j = \binom{\theta}{j} = \frac{ \theta (\theta-1) \cdots (\theta-j+1)}{ j \,!}.
	\end{equation*}
For a tuple $ \tupleh  = ( h_1, \ldots, h_k) \in \mathbb Z^k$ we write $ \mathcal H (\tupleh) = \mathcal H (h_1, \ldots, h_k) $ to denote the expression	
	\begin{equation} \label{eq2.3.4}
		\mathcal H (h_1, \ldots, h_k) = b_1 P^{\theta -1} h_1 + \cdots + b_k P^{\theta -k} h_k.
	\end{equation}

	\begin{lemma} \label{lemma2.3.3}
		 Suppose that $ \theta > 2$ is real and non-integral. Let  $ k = \lfloor 2 \theta \rfloor +1 ,$ and let $t$ be a given natural number. Suppose that $P \geq k^{2k}$ is a real number. We write $T(P)$ to denote the number of integer solutions of the inequality
			\begin{equation*}
				|\mathcal H (\tupleh) | \leq 2t
			\end{equation*}
		in the variables $ h_j,$ satisfying $ | h_j| \leq  t P^{j/2} \hspace{0.1in} (1\leq j \leq k).$ Then one has		
			\begin{equation*}
				T(P) \leq 4 (8t)^k  P^{ \frac{k(k+1)}{4} - \theta + \frac{1}{2}}.
			\end{equation*}
	\end{lemma}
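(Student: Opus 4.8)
The plan is to count the integer solutions of $|\mathcal{H}(\tupleh)| \le 2t$ by fixing the variables $h_2, \ldots, h_k$ first, and then counting the admissible values of $h_1$. Once $h_2, \ldots, h_k$ are fixed, the condition $|\mathcal{H}(\tupleh)| \le 2t$ reads $|b_1 P^{\theta-1} h_1 + c| \le 2t$ for a constant $c$ depending on the other variables, which confines $h_1$ to an interval of length $4t/(|b_1| P^{\theta-1})$. Since $b_1 = \theta$, this length is $4t\theta^{-1}P^{1-\theta}$. As $\theta > 2$, this length is less than $1$ once $P$ is large (here $P \ge k^{2k}$ is more than enough), so there is at most one admissible integer $h_1$ for each choice of $(h_2,\ldots,h_k)$. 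Actually one must be slightly careful: an interval of length $< 1$ contains at most one integer, so the count of valid $h_1$ is at most $1$ for each $(h_2, \ldots, h_k)$.

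Hence $T(P) \le \prod_{j=2}^{k}(2tP^{j/2}+1)$, the number of choices of $(h_2, \ldots, h_k)$ with $|h_j| \le tP^{j/2}$. I would then bound $2tP^{j/2}+1 \le 4tP^{j/2}$ (valid since $P \ge 1$ and $t \ge 1$), giving
\[
T(P) \le \prod_{j=2}^{k}4tP^{j/2} = (4t)^{k-1} P^{\frac{1}{2}\sum_{j=2}^{k}j} = (4t)^{k-1} P^{\frac{1}{2}\left(\frac{k(k+1)}{2}-1\right)} = (4t)^{k-1} P^{\frac{k(k+1)}{4}-\frac12}.
\]
It then remains to check that $(4t)^{k-1}P^{\frac{k(k+1)}{4}-\frac12} \le 4(8t)^{k}P^{\frac{k(k+1)}{4}-\theta+\frac12}$, i.e. that $(4t)^{k-1}P^{\theta-1} \le 4(8t)^{k}$. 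Since $k = \lfloor 2\theta\rfloor+1 > 2\theta$, we have $\theta - 1 < k/2 - 1 < k/2$, so $P^{\theta-1} < P^{k/2}$; but this goes the wrong way, so I would instead use the hypothesis $P \ge k^{2k}$ more carefully, or — more robustly — keep the exponent of $P$ as it naturally arises and absorb the discrepancy into the constant only if the $P$-powers match. Let me reconsider: the exponent $\frac{k(k+1)}{4}-\frac12$ from the crude count versus the claimed $\frac{k(k+1)}{4}-\theta+\frac12$ differ by $\theta - 1 > 1 > 0$, so the claimed bound has a \emph{smaller} power of $P$ and is therefore \emph{stronger} for large $P$ — the crude count is not enough on its own.

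The main obstacle, therefore, is squeezing out the extra factor $P^{\theta - 1}$. To do this I would argue more carefully on the innermost variable: instead of merely noting that the interval for $h_1$ has length $<1$, I would sum over $h_2$ as well and observe that for fixed $h_3, \ldots, h_k$ the \emph{pair} $(h_1, h_2)$ is constrained by $|b_1 P^{\theta-1}h_1 + b_2 P^{\theta-2}h_2 + c'| \le 2t$, which with $|h_2| \le tP$ forces $h_2$ to range over $\le 2tP + 1$ values but, crucially, for each the residual interval for $h_1$ still has length $< 1$ — this still only gives the crude bound. The genuine gain must come from the fact that the \emph{linear form} $\mathcal{H}$ evaluated at integer points cannot cluster near $0$ too often: one fixes $h_3,\dots,h_k$, writes $\mathcal{H} = \theta P^{\theta-1}h_1 + b_2 P^{\theta-2}h_2 + (\text{const})$, and counts lattice points $(h_1,h_2)$ in a thin strip of width $O(tP^{1-\theta})$ and length $O(tP)$ and $O(tP^{1/2})$ in the two coordinate directions; the number of such points is $O\!\left(tP^{1-\theta}\cdot tP\cdot \text{(something)} + tP^{1/2}\right)$, and the area term $t^2 P^{2-\theta}$ beats the crude $t^2 P^{3/2}$ precisely because $2 - \theta < 3/2$. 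Iterating this area-versus-perimeter lattice-point count across all $k$ variables — i.e. estimating the number of integer points in the slab $\{|\mathcal{H}(\tupleh)| \le 2t,\ |h_j| \le tP^{j/2}\}$ by its volume $\asymp t^k P^{\frac{k(k+1)}{4} - \theta + \frac12}$ plus lower-order boundary contributions, and verifying $P \ge k^{2k}$ makes all boundary terms negligible against the leading one with room to spare for the explicit constant $4(8t)^k$ — is the heart of the argument and the step requiring the most care.
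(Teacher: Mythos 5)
The paper does not prove this lemma; it simply cites \cite{arkhipov_zitkov_warings_probl_non_integr_exp}. So I am judging your attempt on its own terms. The first half is sound: you correctly note that fixing $h_2,\dots,h_k$ pins $h_1$ to an interval of length $<1$, derive the crude bound $(4t)^{k-1}P^{k(k+1)/4-1/2}$, and correctly identify the shortfall $P^{\theta-1}$. The proposed repair, however, would not close that gap. For a planar strip of area $<1$ the number of lattice points is not ``area plus a small boundary term''; it is governed by the longest direction of the intersection with the box. In two variables the honest count (cut on $h_2$ rather than $h_1$, since $4t/(|b_2|P^{\theta-2})<1$ for $\theta>2$) gives at most $2tP^{1/2}+1$ valid pairs $(h_1,h_2)$ for each choice of $(h_3,\dots,h_k)$; fed into the box bound for the rest this yields only $T(P)\ll P^{k(k+1)/4-1}$, still short by $P^{\theta-3/2}$. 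And in $k$ variables the ``volume plus lower-order boundary contributions'' picture cannot work at all: the slab has thickness $\ll P^{1-\theta}<1$ in the $h_1$-direction, so its lattice points live on an essentially $(k-1)$-dimensional sheet, the boundary contributions exceed the volume by precisely the factor $P^{\theta-1/2}$ you need to save, and the hypothesis $P\ge k^{2k}$ does nothing to change that. So your plan contains a genuine gap, and you yourself flag that the ``heart of the argument'' is not carried out.

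The missing idea is a cascading constraint on the partial sums of $\mathcal H$. If $\tupleh$ is counted by $T(P)$, then for every $m\le k$
\[
\Bigl|\sum_{j\le m}b_jP^{\theta-j}h_j\Bigr|\le 2t+\sum_{j>m}|b_j|P^{\theta-j}|h_j|\le 2t+t\sum_{j>m}|b_j|P^{\theta-j/2}=:T_m,
\]
so that, once $h_1,\dots,h_{m-1}$ are fixed, $h_m$ is confined to an interval of length $2T_m/(|b_m|P^{\theta-m})$ (and also to the box $|h_m|\le tP^{m/2}$). One then checks directly from the definition of $T_m$: the interval for $h_1$ has length $O_\theta(t)$; for $2\le m\le k-2$ one has $\theta-(m+1)/2>0$, whence $T_m\ll_\theta tP^{\theta-(m+1)/2}$ and the interval for $h_m$ has length $O_\theta(tP^{(m-1)/2})$, a saving of $P^{1/2}$ over the naive box count; for $m=k-1$ one has $\theta-k/2<0$, so $T_{k-1}\ll_\theta t$ and the interval has length $O_\theta(tP^{(k-1)-\theta})$, still below the box bound since $k-1<2\theta+1$; and only at $m=k$ does the box bound $2tP^{k/2}+1$ take over. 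Multiplying these ranges gives the exponent $0+\sum_{m=2}^{k-2}\tfrac{m-1}{2}+\bigl((k-1)-\theta\bigr)+\tfrac{k}{2}=\tfrac{k(k+1)}{4}-\theta+\tfrac12$, exactly as claimed (this route yields $T(P)\ll_{\theta,t}P^{k(k+1)/4-\theta+1/2}$, which is all the paper uses, though the cited source states an explicit constant). The key mechanism — the tolerance $T_m$ shrinks with $m$ because the still-free variables carry geometrically smaller coefficients $b_jP^{\theta-j}$, producing a $P^{1/2}$-saving at each step — is precisely what a volume-with-boundary heuristic cannot see and what your write-up is missing.
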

	
	\begin{proof}
		This is  \cite[Lemma 1]{arkhipov_zitkov_warings_probl_non_integr_exp}. 
	\end{proof}
	
For technical reasons it is more convenient to work with exponential sums over dyadic intervals. For a positive real number $ X $ we write $ g(\alpha) = g(\alpha ; P)$ to denote the exponential sum
	\begin{equation} \label{eq2.3.5}
		g(\alpha ; P) = \sum_{ P < x \leq 2P} e(\alpha x^\theta).
	\end{equation}
We are now equipped to prove the key estimate of the paper. The following is a variant of Theorem \ref{thm2.1.4}, where now we are dealing with the exponential sum $g.$ One may recover Theorem \ref{thm2.1.4} by splitting the interval $[1,P]$ into $ O \left( \log P \right)$ dyadic intervals  and then apply the Theorem below.
	
	\begin{theorem} \label{thm2.3.4}
		Let $ \kappa \geq 1$ be a real number. Suppose that $ t \geq \frac{1}{2} ( \lfloor 2 \theta \rfloor +1) ( \lfloor 2\theta \rfloor +2) $ is a natural number. Then for any  fixed $ \epsilon > 0$ one has
			\begin{equation*}
				\int_{-\kappa} ^{ \kappa} \left| g( \alpha ) \right|^{2t} \normalfont{ \text{d}} \alpha \ll \kappa  P^{ 2t - \theta + \epsilon}.
			\end{equation*}
		The implicit constant on the above estimate depends on $ \epsilon, \theta$ and $t,$ but not on $ \kappa$ and $P.$ Furthermore, for $t > \frac{1}{2} \left( \lfloor 2\theta  \rfloor +1 \right) \left( \lfloor 2\theta  \rfloor +2 \right) $ one can take $ \epsilon  =0 .$
	\end{theorem}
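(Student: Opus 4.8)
The plan is to reduce the dyadic mean value $\int_{-\kappa}^{\kappa}|g(\alpha)|^{2t}\,\mathrm{d}\alpha$ to a counting problem for the inequality $|x_1^\theta+\cdots+x_t^\theta-x_{t+1}^\theta-\cdots-x_{2t}^\theta|<\delta$ over the dyadic box, and then to count those solutions by means of the Taylor expansion of $x\mapsto x^\theta$ combined with Theorem \ref{thm2.3.1} (the Main Conjecture in Vinogradov's mean value theorem for the degree $k=\lfloor 2\theta\rfloor+1$). First I would split $[-\kappa,\kappa]$ into $O(\kappa)$ unit intervals; since the dyadic exponential sum $g$ has a smooth phase, a standard completion/periodicity-type argument (or direct application of Lemma \ref{lemma2.3.2}(ii) with $I_1=I_2=(P,2P]$ and $\delta=1$, after exploiting that the phase derivative is monotone so that a shift by an integer barely changes the integrand) lets me bound each unit-interval contribution by $\ll V_t((P,2P],(P,2P];c)$ for a suitable constant $c$; summing gives $\int_{-\kappa}^{\kappa}|g|^{2t}\,\mathrm{d}\alpha\ll\kappa\, V_t((P,2P],(P,2P];c)$. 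So it suffices to show $V_t\ll P^{2t-\theta+\epsilon}$ (and with $\epsilon=0$ in the strict range).

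Next I would estimate $V_t$. Write each variable as $x_i=P+h_i$ with $0<h_i\le P$, wait—more efficiently, use a dyadic-within-dyadic or Weyl-differencing-free device: expand
\[
x_i^\theta=P^\theta\Bigl(1+\tfrac{h_i}{P}\Bigr)^\theta=P^\theta\sum_{j\ge 0}b_j\Bigl(\tfrac{h_i}{P}\Bigr)^j,
\]
so that $x_i^\theta=\sum_{j=0}^{k}b_j P^{\theta-j}h_i^{j}+E_i$, where the tail $E_i$ is $O(P^{\theta-k-1}h_i^{k+1})=O(P^{-\gamma'})$ for some $\gamma'>0$ because $k+1>\theta$ (this is where $k=\lfloor 2\theta\rfloor+1$, giving plenty of room; the choice of $\omega$ and the constant in (\ref{eq2.2.9}) are tuned precisely so these tails are negligible). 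Then the inequality $|\sigma_{t,\theta}(\tuplex)|<\delta$ forces
\[
\Bigl|\sum_{j=1}^{k}b_j P^{\theta-j}\bigl(h_1^{j}+\cdots+h_t^{j}-h_{t+1}^{j}-\cdots-h_{2t}^{j}\bigr)\Bigr|<\delta+O(P^{-\gamma'})\le 1
\]
(for $P$ large). Writing $H_j=\sum_{i=1}^t h_i^{j}-\sum_{i=1}^t h_{t+i}^{j}$ this says $|\mathcal H(H_1,\dots,H_k)|\le 1$ in the notation of (\ref{eq2.3.4}), and trivially $|H_j|\ll P^{j}$; Lemma \ref{lemma2.3.3} then bounds the number of admissible tuples $(H_1,\dots,H_k)$ by $\ll P^{k(k+1)/4-\theta+1/2+\epsilon}$. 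Hmm—the exponents there are in terms of $P^{j/2}$, so I should instead dissect each $h_i$ into ranges $h_i\asymp P^{1/2}$ dyadically, or more simply observe that after rescaling the count of $(H_1,\dots,H_k)$ with $|H_j|\le P^{j}$ and $|\mathcal H|\le 2t$ is $\ll P^{k(k+1)/2-\theta+1+\epsilon}$ by the same lattice-point argument, and for each fixed admissible $(H_1,\dots,H_k)$ the number of $(h_1,\dots,h_{2t})$ solving the Vinogradov-type system $\sum h_i^j-\sum h_{t+i}^j=H_j$ $(1\le j\le k)$ is, by a shifted/translation-dilation-invariant form of Theorem \ref{thm2.3.1}, at most $J_{t,k}(P)^{1/?}$... this is the delicate bookkeeping step.

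Cleaner: I would combine Lemma \ref{lemma2.3.3} and Theorem \ref{thm2.3.1} as follows. By Lemma \ref{lemma2.3.2}(ii), $V_t\ll\int_{-\Delta}^{\Delta}|H_1(\alpha)^2H_2(\alpha)^{2t-2}|\,\mathrm{d}\alpha$ with $H_1=H_2=g$, $\Delta\asymp 1$; substituting the Taylor expansion of the phase turns $g(\alpha)$ on $|\alpha|\le\Delta$ into $\sum_{P<x\le 2P}e\bigl(\alpha\sum_{j=1}^k b_jP^{\theta-j}h^j+O(|\alpha|P^{-\gamma'})\bigr)$ with $h=x-P$; absorbing the uniformly small error, this is comparable to a Vinogradov exponential sum $\sum_{0<h\le P}e(\beta_1 h+\cdots+\beta_k h^k)$ evaluated along the curve $\beta_j=\alpha b_j P^{\theta-j}$, and pushing the $\alpha$-integral out to a full box via a change of variables and positivity yields $\int_{-\Delta}^{\Delta}|g|^{2t}\,\mathrm{d}\alpha\ll (\text{Jacobian})\cdot J_{t,k}(P)$ plus a diagonal term controlled by Lemma \ref{lemma2.3.3}. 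With $t\ge\frac12 k(k+1)$, Theorem \ref{thm2.3.1} gives $J_{t,k}(P)\ll P^{2t-k(k+1)/2+\epsilon}$, and tracking the Jacobian $P^{\theta-1}P^{\theta-2}\cdots P^{\theta-k}=P^{k\theta-k(k+1)/2}$ together with the rescaling one recovers exactly $P^{2t-\theta+\epsilon}$; the strict inequality $t>\frac12 k(k+1)$ removes the $\epsilon$ because then $J_{t,k}(P)\ll P^{2t-k(k+1)/2}$ with no loss. The main obstacle I anticipate is making the passage from the $\alpha$-integral over $[-\Delta,\Delta]$ to the genuine Vinogradov mean value over $[0,1)^k$ rigorous and efficient: one must handle the curve-versus-box discrepancy (the $\beta_j$ trace out a one-parameter curve, not a box), which is precisely what Lemma \ref{lemma2.3.3} is designed to absorb — it bounds the "extra" lattice points coming from the $k-1$ missing dimensions — and one must verify that the error terms $E_i$ and $O(|\alpha|P^{-\gamma'})$ never accumulate beyond the slack built into $\delta_0$ and $\omega$ in (\ref{eq2.2.9}). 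Getting the exponent to come out as $-\theta$ on the nose (rather than $-\theta+\tfrac12$, the loss in \cite{arkhipov_zitkov_warings_probl_non_integr_exp}) is exactly the point where a careful — rather than trivial — treatment of this discrepancy is needed.
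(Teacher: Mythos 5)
Your proposal has a genuine gap, and your own text half-recognizes it. In the first attempt, writing $x_i=P+h_i$ with $0<h_i\le P$, the Taylor remainder is $E_i=O(P^{\theta-k-1}h_i^{k+1})$. With $h_i$ ranging up to $P$ this is $O(P^\theta)$, not $O(P^{-\gamma'})$: the assertion "because $k+1>\theta$" does not control it, since $h_i^{k+1}$ can cancel $P^{\theta-k-1}$ exactly. For the remainder to be $o(1)$ one needs $h_i\ll\sqrt P$, in which case $E_i=O(P^{\theta-(k+1)/2})$ and the relevant inequality is $k+1=\lfloor 2\theta\rfloor+2>2\theta$. Relatedly, Lemma \ref{lemma2.3.3} is calibrated to $|h_j|\le tP^{j/2}$, and feeding in $|H_j|\ll P^{j}$ instead does not match its hypotheses. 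You notice the mismatch ("I should instead dissect each $h_i$ into ranges $h_i\asymp P^{1/2}$") but then abandon it for the "cleaner" route, which does not constitute a proof: the map $\alpha\mapsto(\alpha b_1 P^{\theta-1},\dots,\alpha b_kP^{\theta-k})$ parametrizes a \emph{line} in $\mathbb R^k$, so there is no $k$-dimensional change of variables and the "Jacobian" $P^{k\theta-k(k+1)/2}$ is not meaningful; positivity cannot convert an integral along a curve into an integral over $[0,1)^k$. You label this "the main obstacle" accurately, but it remains unresolved.

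The missing idea is that \emph{every} one of the $2t$ variables must be localized to an interval of length $\sqrt P$, and the cost of doing so must be tracked. The paper first passes from $\int_{-\kappa}^{\kappa}|g|^{2t}\,\mathrm d\alpha$ to $\kappa\,V_t(I;1/2)$ via Lemma \ref{lemma2.3.2} with $\delta=1/(2\kappa)$ and monotonicity in $\delta$ (the "completion/periodicity" step you sketch is not available, since $\alpha x^\theta$ is not periodic in $\alpha$, and Lemma \ref{lemma2.3.2} with $\delta=1$ controls $\int_{-1/2}^{1/2}$, not $\int_a^{a+1}$ for arbitrary $a$). It then covers $I=(P,2P]$ by $O(\sqrt P)$ intervals $\widetilde I_\ell$ of length $\sqrt P$, pins $2t-2$ of the variables into a single $\widetilde I_{\ell_0}$ via H\"older at a cost of $P^{t-1}$, and observes that, with those variables so confined, the inequality forces $|x_1-x_{t+1}|\ll\sqrt P$; a further application of Lemma \ref{lemma2.3.2} (the $H_{\mathcal S}$ variant) localizes the last two variables at a further cost of $P^{1/2}$. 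Only now, with all variables in a $\sqrt P$-interval about $Q_\ell=\lfloor P_\ell\rfloor$, is the Taylor expansion legitimate, and Theorem \ref{thm2.3.1} with $Y\asymp\sqrt P$ together with Lemma \ref{lemma2.3.3} gives a count $\ll P^{t-\theta+1/2+\epsilon}$. Multiplying by the accumulated loss $P^{t-1}\cdot P^{1/2}=P^{t-1/2}$ yields $P^{2t-\theta+\epsilon}$ exactly. This cascade of localizations is precisely what eliminates the $P^{1/2}$ loss of Arkhipov and Zhitkov, and it is what your proposal, despite assembling the correct ingredients, does not supply.
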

	
	\begin{proof}
	 We set  $I = (P, 2P].$ Apply Lemma \ref{lemma2.3.2} with  $ I_1 = I_2 = I$ and $ \delta = \frac{1}{2 \kappa}.$ So one has		
		\begin{equation} \label{eq2.3.6}
			\frac{1}{2 \kappa} \int_{ - \kappa}^{ \kappa} \left| g( \alpha) \right|^{2t}  \text d \alpha \ll   V_t \left(I ; \frac{1}{2 \kappa} \right),
		\end{equation}
	where $  V_t \left(I ; \frac{1}{2 \kappa} \right)$ denotes the number of integer solutions of the inequality	
		\begin{equation*} \label{ineq_after_chang_variab}
			| x_1^\theta + \cdots + x_t^\theta - x_{t+1}^\theta - \cdots - x_{2t}^\theta | < \frac{1}{2 \kappa},
		\end{equation*}		
	with $ P < \tuplex \leq 2P.$ Since $ \kappa \geq 1 $ we plainly have that
		\begin{equation*}
			V_t \left(I ;  \frac{1}{2 \kappa} \right) \leq  V_t \left(I ; \frac{1}{2} \right),
		\end{equation*}
	where $ V_t \left( I ; \frac{1}{2} \right)$ denotes the number of integer solutions of the inequality
		\begin{equation*} 
		| x_1^\theta + \cdots + x_t^\theta - x_{t+1}^\theta - \cdots - x_{2t}^\theta | < \frac{1}{2},
		\end{equation*}	
 	with $ P < \tuplex \leq 2P.$ Hence by (\ref{eq2.3.6}) we obtain that
		\begin{equation} \label{eq2.3.7}
			\int_{- \kappa}^{\kappa} \left| g( \alpha) \right|^{2t}  \text d \alpha \ll  \kappa  V_t \left(I ;  \frac{1}{2} \right).
		\end{equation}
		
We define the interval		
	\begin{equation*}  
		\widetilde I = (  P,  P +  (\lfloor \sqrt P \rfloor + 1)  \sqrt P ].
	\end{equation*}
Note that $ I \subset \widetilde I.$ Moreover, for $ \alpha \in \mathbb R $ we write
		\begin{equation*}
			\widetilde g (a) = \sum_{ x \in \widetilde I} e(\alpha x^\theta).
		\end{equation*}
It is apparent that $  V_t \left(I ;\frac{1}{2} \right)$ is bounded above by the number of integer solutions of the inequality
		\begin{equation*}
			\left| \sum_{i=1}^t ( x_i^\theta - x_{t+i}^\theta) \right|< \frac{1}{2},	
		\end{equation*}
with $ x_1, x_{t+1} \in I$ and $ x_i, x_{t+i} \in \widetilde I \hspace{0.05in} (i \neq 1, t+1) .$ Denote this number by $ V_t (I, \widetilde I ; \frac{1}{2}).$
			
Apply now Lemma \ref{lemma2.3.2} with $ I_1= I$ and $I_2 = \widetilde I.$ This yields 
	\begin{equation} \label{eq2.3.8}
		\begin{split}
			V_t \left(I, \widetilde I ; \frac{1}{2} \right) \ll \int_{-1}^1 | g ( \alpha)|^2 | \widetilde g(\alpha)|^{2t - 2} \text d \alpha. 
		\end{split}
	\end{equation}
Putting together (\ref{eq2.3.8}) and the fact that $ V_t \left( I ; \frac{1}{2}  \right) \leq V_t \left( I, \widetilde I ; \frac{1}{2} \right),$ reveals that
	 \begin{equation} \label{eq2.3.9}
	 	V_t \left( I; \frac{1}{2} \right) \ll \int_{-1}^1 | g ( \alpha)|^2 | \widetilde g(\alpha)|^{2t - 2} \text d \alpha. 
	 \end{equation}
	
Our aim now is to bound the mean value on the right hand side of (\ref{eq2.3.9}). For a natural number $ \ell \geq 1$ we write 
		\begin{equation}  \label{eq2.3.10}
			P_\ell =  P + (\ell -1) \sqrt P,
		\end{equation}			
and set $ \widetilde I_\ell = ( P_\ell, P_{\ell+1}].$ Note that $ \widetilde I_\ell$ forms a cover of the interval $ \widetilde I$ consisting of subintervals of length $ \sqrt P.$  We record this in the following inclusion
		\begin{equation} \label{eq2.3.11}
			I \subset \widetilde I \subset \bigcup_{\ell =1}^{\lfloor \sqrt P \rfloor +1} \widetilde I_\ell.
		\end{equation}
For $ \alpha \in \mathbb R $ we now set
		\begin{equation*}
			\widetilde g_\ell (\alpha) = \sum_{ x \in \widetilde I_\ell } e(  \alpha x^\theta). 
		\end{equation*}
Incorporating the exponential sum $ \widetilde g_\ell  (\alpha),$ we deduce by the triangle inequality followed by an application of H{\"o}lder's inequality that
		\begin{equation*} 
			\begin{split}
				 \int_{-1}^1 | g ( \alpha)|^2 | \widetilde g(\alpha)|^{2t - 2} \text d \alpha & \leq \int_{-1}^1 | g( \alpha)|^{2}  \left(\sum_{ \ell =1}^{\lfloor \sqrt P \rfloor +1} \left|  \widetilde g_\ell (  \alpha)  \right| \right)^{2t-2} \text d \alpha \\[10pt]
				& \leq \left(\lfloor \sqrt P \rfloor +1\right)^{2t-3}  \sum_{ \ell =1}^{\lfloor \sqrt P \rfloor +1} \int_{-1}^1 \left| g(  \alpha) \right|^2 \left|  \widetilde g_\ell (\alpha)  \right|^{2t-2} \text d \alpha .
			\end{split}
		\end{equation*}
Invoking (\ref{eq2.3.9}), we infer that for some $ \ell_0$ with $ 1 \leq \ell_0 \leq \lfloor \sqrt P \rfloor +1 $ one has
		\begin{equation} \label{eq2.3.12}
			\begin{split}	
				V_t \left( I ; \frac{1}{2} \right)  & \ll \left(\lfloor \sqrt P \rfloor +1\right)^{2t-2}  \int_{-1}^1 \left| g(  \alpha) \right|^2 \left|  \widetilde g_{\ell_0} (\alpha)  \right|^{2t-2} \text d \alpha \\[10pt]
				& \ll P^{t-1} \int_{-1}^1 \left| g(  \alpha) \right|^2 \left|  \widetilde g_{\ell_0} (\alpha)  \right|^{2t-2} \text d \alpha.
			\end{split}
		\end{equation}

We now turn our attention to the mean value on the right hand side of (\ref{eq2.3.12}). One can apply Lemma \ref{lemma2.3.2} with $ I_1 = I$ and $ I_2 = \widetilde I_{\ell_0}.$ Then one has that
		\begin{equation} \label{eq2.3.13}
			\int_{-1}^1 \left| g(  \alpha) \right|^2 \left|  \widetilde g_{\ell_0} (\alpha)  \right|^{2t-2} \text d \alpha \ll V_t \left( I, \widetilde I_{\ell_0} ; \frac{1}{2} \right),
		\end{equation}
where $  V_t \left( I, \widetilde I_{\ell_0} ; \frac{1}{2} \right)$ denotes the number of integer solutions of the inequality
	\begin{equation} \label{eq2.3.14}
		\left| x_1^\theta - x_{t+1}^\theta + \sum_{i=2}^t (x_i^\theta - x_{t+i}^\theta) \right| < \frac{1}{2},
	\end{equation}
with $ 	x_1, x_{t+1} \in I$ and $x_i \in \widetilde I_{\ell_0} \hspace{0.05in} (i \neq 1, t+1).$

Recall  that $ \widetilde I_{\ell_0} = (P_{\ell_0} , P_{\ell_0+1}],$ where $ P_{\ell_0} =  P + (\ell_0 -1) \sqrt P .$ For each index $i \neq 1, t+1 $ we set
		\begin{equation*}
			y_i = x_i -  P_{\ell_0}. 
		\end{equation*}
Clearly one has $ 0 < y_i \leq \sqrt P .$ Upon noting that $ P \gg P_{\ell_0} \gg \sqrt P,$ an application of the mean value theorem of differential calculus yields for each index $ i \neq 1, t+1,$ that
	\begin{equation*}
		\begin{split}	
			| x_i ^\theta - x_{t+i}^\theta | = \left| (y_i +P_{\ell_0})^\theta - (y_{t+i} +P_{\ell_0})^\theta \right| \asymp P_{\ell_0}^{\theta -1}  | y_i - y_{t+i} | \ll  P^{\theta -1/2}.
		\end{split}
	\end{equation*}
By the triangle inequality, the above estimate leads to  
		\begin{equation*}	
			\left| \sum_{ \substack{i=2 \\ x_i \in \widetilde I_\ell}}^{t} (x_i^\theta - x_{t+i}^\theta) \right|  \ll P^{\theta -1/2}.
		\end{equation*}	
Invoking (\ref{eq2.3.14}) we now have that $ | x_1^\theta - x_{t+1}^\theta | \ll  P^{\theta -1/2}.$ On the other hand, an application of the mean value theorem of differential calculus yields $| x_1^\theta - x_{t+1}^\theta | \asymp | x_1 - x_{t+1} | P^{\theta -1}.$ Thus, we can conclude that $ | x_1 - x_{t+1} | \ll \sqrt P .$ One can rewrite this asymptotic estimate in the shape $ | x_1 - x_{t+1} | \leq C_1 \sqrt P ,$ where $ C_1 >0$ is a real number that depends at most on $t$ and $ \theta.$ In view of this new constraint one can return to inequality (\ref{eq2.3.14}) and count solutions subject to the constraints
		\begin{equation} \label{eq2.3.15}
			x_1, x_{t+1} \in I, \hspace{0.1in} | x_1 - x_{t+1} | \leq C_1  \sqrt P, \hspace{0.1in}  \text{and} \hspace{0.1in} x_i \in \widetilde I_{\ell_0} \hspace{0.05in} (i \neq 1, t+1).
		\end{equation}

The points $ x_1, x_{t+1} $ belong to the interval $I.$ Recalling the inclusion  (\ref{eq2.3.11}) we have that there are indices $ \ell_1$ and $ \ell_2$ for which 
	\begin{equation*}
		P_{\ell_1} < x_1 \leq P_{\ell_1 +1}  \hspace{0.1in} \text{and} \hspace{0.1in} P_{\ell_2} < x_{t+1} \leq P_{\ell_2 +1} .
	\end{equation*}		
Then, combining (\ref{eq2.3.15}) with  the definition (\ref{eq2.3.10}) of $P_\ell$ and using the fact that for each index $ \ell$ we have $ P_{\ell +1} - P_{\ell} = \sqrt P,$ one can deduce that
	\begin{equation*}
		\begin{split}
			C_1 \sqrt P \geq | x_1 - x_{t+1} | \geq \left| P_{\ell_1} - P_{\ell_2}  \right| - \sqrt P \geq \left( | \ell_1 - \ell_2| -1 \right) \sqrt P.
		\end{split}
	\end{equation*}
From the above computation we obtain that $ | \ell_1 - \ell_2 | \leq C_1 + 1.$

We now bound from above the number of integer solutions of the inequality (\ref{eq2.3.14}), under the constraint (\ref{eq2.3.15}) and the additional constraint we extracted just above. To do so, we make use of appropriate generating functions. We write $ \mathcal S \subset I \times I $ for the set of lattice points $ x_1, x_{t+1} \in I$ which satisfy $ |x_1 - x_{t+1}| \leq C_1 \sqrt P.$ By Lemma \ref{lemma2.3.2} with $\mathcal S$ as above and $I_2 = \widetilde I_{\ell_0}$ we deduce that
	\begin{equation} \label{eq2.3.16}
			V_t \left( I, \widetilde I_{\ell_0}; \frac{1}{2} \right) \ll \int_{-1}^1 \left| H_{\mathcal S} (\alpha) \widetilde g_{\ell_0} (\alpha)^{2t-2}  \right|,
	\end{equation} 
where 
	\begin{equation*}
		H_{\mathcal S} (\alpha) = \sum_{(x_1, x_{t+1}) \in \mathcal S} e \left( \alpha(x_1^\theta - x_{t+1}^\theta) \right).
	\end{equation*}
Using the cover $ \left( \widetilde I_\ell \right)_\ell$ and taking into account our previous conclusion that $ | \ell_1 - \ell_2| \leq C_1 + 1, $ we infer that
	\begin{equation*}
		\left| H_{\mathcal S}(\alpha) \right| \ll \sum_{\ell_1=1}^{ \lfloor \sqrt P \rfloor +1}  \sum_{ \substack{ \ell_2 = 1 \\ | \ell_1 - \ell_2| \leq C_1 + 1} }^{ \lfloor \sqrt P \rfloor + 1} | \widetilde g_{\ell_1} (\alpha) | | \widetilde g_{\ell_2} (\alpha)|.
	\end{equation*} 
Hence, for some $ 1 \leq \ell_1, \ell_2 \leq \lfloor \sqrt P \rfloor + 1$ one has
	\begin{equation*}
			\left| H_{\mathcal S}(\alpha) \right| \ll P^{ \frac{1}{2} } | \widetilde g_{\ell_1} (\alpha) | | \widetilde g_{\ell_2} (\alpha)|.
	\end{equation*}	

One can now bound above the right hand side of (\ref{eq2.3.16}). So we infer that
	\begin{equation} \label{eq2.3.17}
		V_t \left( I, \widetilde I_{\ell_0} ; \frac{1}{2} \right) \ll P^{ \frac{1}{2}} \int_{-1}^1 | \widetilde g_{\ell_1} (\alpha)  \widetilde g_{\ell_2} (\alpha) \widetilde g_{\ell_0} (\alpha)^{2t- 2}| \text d \alpha.
	\end{equation}	
Invoking the elementary inequality $ |z_1 \cdots z_n| \ll |z_1|^n + \cdots + | z_n|^n ,$ which is valid for all complex numbers, one has that
	\begin{equation*}
		 | \widetilde g_{\ell_1} (\alpha) \widetilde g_{\ell_2} (\alpha)  \widetilde g_{\ell_0} (\alpha)^{2t- 2}| \ll | \widetilde g_{\ell_1} (\alpha) |^{2t} +  | \widetilde g_{\ell_2} (\alpha) |^{2t} +  | \widetilde g_{\ell_0} (\alpha) |^{2t}.
	\end{equation*}  
Hence, (\ref{eq2.3.17}) delivers the estimate
	\begin{equation*}
			V_t \left( I, \widetilde I_{\ell_0} ; \frac{1}{2} \right) \ll P^{ \frac{1}{2}} \int_{-1}^1 | \widetilde g_\ell (\alpha)|^{2t} \text d \alpha,
	\end{equation*}
where $ \ell$ is one of the indices $ \ell_1, \ell_2, \ell_0.$ Incorporating this estimate into (\ref{eq2.3.13}) and recalling (\ref{eq2.3.12}), we deduce that
	\begin{equation}  \label{eq2.3.18}
		V_t \left( I ; \frac{1}{2} \right) \ll  P^{ t - \frac{1}{2}} \int_{-1}^1 | \widetilde g_\ell (\alpha)|^{2t} \text d \alpha. 
	\end{equation}
We emphasize here that our choice of $ 1 \leq \ell \leq \lfloor \sqrt P \rfloor + 1$ is now fixed.
		
In view of (\ref{eq2.3.7}) our aim in the rest of the proof is to bound the mean value appearing on the right hand side of (\ref{eq2.3.18}). Appealing to Lemma \ref{lemma2.3.2} with $ I_1 = I_2 = \widetilde I_{\ell} $ and $ \delta = \frac{1}{2}$ one has  
	\begin{equation} \label{eq.2.3.19}
		 \int_{-1}^1 | \widetilde g_\ell (\alpha)|^{2t} \text d \alpha \ll V_t \left( \widetilde I_\ell ; \frac{1}{2} \right),
	\end{equation}
where $  V_t \left( \widetilde I_\ell ; \frac{1}{2} \right) $ denotes the number of integer solutions of the inequality 
	\begin{equation} \label{eq2.3.20}
		\left| x_1^\theta + \cdots + x_t^\theta - x_{t+1}^\theta - \cdots - x_{2t}^\theta \right| < \frac{1}{2},  
	\end{equation}
with $ x_i \in \widetilde I_\ell.$ From now on we essentially follow \cite[Lemma 3]{arkhipov_zitkov_warings_probl_non_integr_exp}. We set $ Q_\ell = \lfloor P_\ell \rfloor $ and define $ y_i = x_i - Q_\ell \hspace{0.05in} (1\leq i \leq 2t).$ Note that 
		\begin{equation*} 
			0 < y_i < \lfloor \sqrt P \rfloor + 1 < Q_\ell.
		\end{equation*}	
This observation is immediate since by the definitions of $P_\ell$ and $Q_\ell$ one has
		\begin{equation} \label{eq2.3.21}
			0 \leq P_\ell - \lfloor P_\ell \rfloor < y_i \leq P_{\ell +1} - \lfloor P_\ell \rfloor = P_\ell - \lfloor P_\ell \rfloor + \sqrt P \leq \sqrt P + 1 < Q_\ell.
		\end{equation}
Then inequality (\ref{eq2.3.20}) takes the shape
		\begin{equation*} 
			\left| \left( y_1 + Q_\ell \right)^\theta + \cdots + \left( y_t + Q_\ell \right)^\theta - \left( y_{t+1} + Q_\ell \right)^\theta - \cdots  \left( y_{2t} + Q_\ell \right)^\theta \right| < \frac{1}{2},
		\end{equation*}
or equivalently, 
		\begin{equation} \label{eq2.3.22}
			Q_\ell^\theta \left|  \left( 1 + \frac{y_1}{Q_\ell} \right)^\theta + \cdots + \left( 1 + \frac{y_t}{Q_\ell} \right)^\theta - \left( 1 + \frac{y_{t+1}}{Q_\ell} \right)^\theta - \cdots - \left( 1 + \frac{y_{2t}}{Q_\ell} \right)^\theta  \right|< \frac{1}{2}.
		\end{equation}			

We consider the function $h : (-1,1) \to \mathbb R $ with $ h(z) = (1+z)^\theta.$ Then a Taylor expansion up to the $k = \lfloor 2 \theta \rfloor + 1$ term around the point $z_0 = 0$ yields
	\begin{equation*}	
			h(z) =  h(0) + \sum_{j=1}^k \frac{h^{(j)}(0)}{ j \,!} z^j + r_k(z) =1 + \sum_{ j=1}^k b_j z^j + r_k(z),
	\end{equation*}
where recall that 
	\begin{equation*}
		b_j = \binom{ \theta}{j} =  \frac{\theta (\theta -1) \cdots (\theta-j+1) }{j \,!}, 
	\end{equation*} 
is the $j$-th combinatorial coefficient of the expansion. Here $r_k(z)$ denotes the remainder term. In Lagrange's form the remainder term takes the shape
	\begin{equation} \label{eq2.3.23}
		r_k(z) = \frac{h^{(k+1)}(c)}{(k+1) \,!} z^{k+1} =  b_{k+1} ( 1 +  c)^{\theta -k -1} z^{k+1}, 
	\end{equation}
for some $c$ between $0$ and $z.$ 

For each index $ 1 \leq i \leq 2t$ we write $ z_i = y_i / Q_\ell.$ In view of (\ref{eq2.3.21}) and for sufficiently large $P$ one has 
	\begin{equation*} 
		0 \leq   \frac{ P_\ell - \lfloor P_\ell \rfloor }{ \lfloor P_\ell \rfloor} < z_i < \frac{2}{ \sqrt P} < 1.
	\end{equation*}
Indeed, this follows immediately upon writing
	\begin{equation*}
		z_i = \frac{y_i}{Q_\ell} \leq \frac{ \sqrt P +1}{ \lfloor P_\ell \rfloor} <  \frac{ \sqrt P +1}{P_\ell -1} < \frac{2}{ \sqrt P }.
	\end{equation*}
Thus, using (\ref{eq2.3.23}) with $ z = z_i$ and upon noting that $ 1 + c > 1$ and $ \theta - k -1 < 0,$ we may infer the following upper bound for the error term
	\begin{equation} \label{eq2.3.24}
		\begin{split}	
			| r_k(z_i) |  &\leq | b_{k+1} |  \left( \frac{2} { \sqrt P} \right)^{k+1} \\					
			& = | b_{k+1} | 2^{k+1} P^{- \frac{k+1}{2}}.
		\end{split}
	\end{equation}
		
Expanding each term occurring in (\ref{eq2.3.22}), we obtain that	
	\begin{equation} \label{eq2.3.25}
		\begin{split}	
			Q_\ell^\theta \left( 1 + \frac{y_i}{Q_\ell} \right)^\theta & = Q_\ell^\theta \left( 1 + b_1 \left(\frac{y_i}{Q_\ell} \right) + \cdots + b_k \left( \frac{y_i}{Q_\ell} \right)^k + r_k \left ( \frac{ y_i}{ Q_\ell} \right) \right) \\[10pt]
			& = Q_\ell^\theta + b_1 Q_\ell^{\theta -1} y_1 + \cdots + b_k	Q_\ell^{\theta -k} y_i^k + Q_\ell^\theta r_k \left ( \frac{ y_i}{ Q_\ell} \right).				
		\end{split}
	\end{equation}
For large $P$ one has $ Q_\ell \leq 2 P. $ So by (\ref{eq2.3.24}) and since $ k+1 = \lfloor 2 \theta \rfloor +2 > 2 \theta ,$ we infer that as $ P \to \infty $ one has
	\begin{equation*}
		\begin{split}
			\left| Q_\ell^\theta r_k \left ( \frac{ y_i}{ Q_\ell} \right) \right| & \leq | b_{k+1} | 2^{k+1}  P^{- \frac{k+1}{2} } \left( 2  P \right)^\theta \\[10pt]
			& =  | b_{k+1} | 2^{k+1+\theta}  P^{\theta - \frac{k+1}{2}} \\[10pt]
			& = o (1).
		\end{split}
	\end{equation*}
Consequently, when $P$ is large enough in terms of $k$ one has for each index $ 1 \leq i \leq 2t $  that
		\begin{equation} \label{eq2.3.26}
			\left| Q_\ell^\theta r_k \left ( \frac{ y_i}{ Q_\ell} \right) \right| \leq \frac{1}{4t}.
		\end{equation}
		
Substituting the asymptotic expansion (\ref{eq2.3.25}) into (\ref{eq2.3.22}) and taking into account (\ref{eq2.3.26}) together with the symmetry of the inequality, we deduce that the number of integer solutions of the inequality (\ref{eq2.3.22}) is bounded above by the number of integer solutions of the inequality
		\begin{equation*}
			\left| \sum_{j=1}^k b_j Q_{\ell}^{\theta -j} \left( y_1^j + \cdots + y_t^j -  y_{t+1}^j - \cdots  - y_{2t}^j \right) \right|  < \frac{2t}{ 4t} + \frac{1}{2} = 1.
		\end{equation*}
Rearranging the terms in the summation on the left hand side of the above expression, we can rewrite the last inequality in an equivalent form as	
		\begin{equation}  \label{eq2.3.27}	
			\left| b_1 Q_\ell^{\theta -1} \sum_{i=1}^t (y_i - y_{t+i}) + \cdots + b_k Q_\ell^{ \theta -k} \sum_{i=1}^t (y_i^k - y_{t+i}^k)  \right| < 1.
		\end{equation}

The number of integer solutions of the inequality (\ref{eq2.3.27}) with $ 0 < y_i < 1 + \lfloor \sqrt P \rfloor$ is bounded above by the number of integer solutions of the system
		\begin{equation} \label{eq2.3.28}
			\begin{cases}
				\displaystyle \left| b_1 Q_\ell^{\theta -1} h_1 + \cdots + b_k Q_\ell^{\theta-k} h_k \right| < 1 \\[15pt]
				\displaystyle \sum_{i=1}^t (y_i^j - y_{t+i}^j) = h_j \hspace{0.1in} (1 \leq j \leq k)								
			\end{cases}
		\end{equation}
with $ 0 < y_i \leq Y$ where $Y = 1+ \lfloor \sqrt P \rfloor .$ We denote this counting function by $ Z_{t,k} \left( Y ; \tupleh \right). $ Note that the integers $ h_j$  satisfy the relation $ 	| h_j | \leq t Y^j  \hspace{0.1in} (1 \leq j \leq k).$	

We write $ J_{t, k} \left(  Y; \tupleh \right) $ to denote the number of integer solutions of the inhomogeneous Vinogradov system 
		\begin{equation*}
			\sum_{i=1}^t ( y_i^j - y_{t+i}^j ) = h_j \hspace{0.5in} (1 \leq j \leq k),
		\end{equation*}
with $ 0 < y_i  \leq Y.$ By orthogonality one has
		\begin{equation*}
			J_{t, k} \left(  Y ; \tupleh \right) = \int_{ [0,1)^k} \left| \sum_{0 < y_i \leq Y} e ( \alpha_1 y + \cdots + \alpha_k y^k) \right|^{2t} e( - \tuplealpha \cdot \tupleh) \text d \tuplealpha ,
		\end{equation*}
where as usual $ \tuplealpha \cdot \tupleh$ stands for $ \alpha_1 h_1 + \cdots + \alpha_k h_k.$ By the triangle inequality and in view of Theorem \ref{thm2.3.1} one has for any fixed $ \epsilon > 0$ that 
		\begin{equation} \label{eq2.3.29}
			J_{t, k} \left(  Y ; \tupleh \right) \leq	J_{t, k} \left(  Y  \right) \ll Y^{2t - \frac{1}{2} k(k+1) + \epsilon}.
		\end{equation}

Recall the definition (\ref{eq2.3.4}) of the expression $ \mathcal H ,$ where $P$ is now replaced by $Q_\ell.$ Turning our attention to the system (\ref{eq2.3.28}) we see that 
		\begin{equation*}
			Z_{t,k} \left( Y ; \tupleh \right) \ll  \sum_{ \substack{ | h_j | \leq t Y^j  \\ 1 \leq j \leq k \\ | \mathcal H (\tupleh) | < 1 }} J_{t,k} \left( Y ; \tupleh \right), 	
		\end{equation*} 		
which by the triangle inequality leads to 
		\begin{equation} \label{eq2.3.30}
			\begin{split}
				Z_{t,k} \left( Y ; \tupleh \right)  \ll  J_{t,k} \left( Y \right)  \sum_{ \substack{ | h_j | \leq t Y^j  \\ 1 \leq j \leq k \\ | \mathcal H (\tupleh) | < 1 }} 1.
			\end{split}
		\end{equation}
Recall now that $Y = 1+ \lfloor \sqrt P \rfloor \ll \sqrt P \asymp Q_\ell^{1/2}.$ One may estimate the sum on the right hand side of (\ref{eq2.3.30}) by invoking Lemma \ref{lemma2.3.3}.  Hence, appealing to (\ref{eq2.3.29}) and Lemma \ref{lemma2.3.3} the estimate (\ref{eq2.3.30}) now delivers
		\begin{equation} \label{eq2.3.31}
			\begin{split}	
				Z_{t,k} \left( Y ; \tupleh \right) & \ll Y^{2t - \frac{1}{2} k(k+1) + \epsilon} \cdot P^{ \frac{1}{4}k(k+1) - \theta + \frac{1}{2}} \\[10pt]
				& \ll P^{ t - \theta + \frac{1}{2} + \epsilon}.
			\end{split}
		\end{equation}					

Putting together (\ref{eq2.3.31}), (\ref{eq.2.3.19}) and (\ref{eq2.3.18}) we deduce that
	\begin{equation*}
		V_t \left( I ; \frac{1}{2} \right) \ll P^{2t - \theta + \epsilon},
	\end{equation*}
which in view of (\ref{eq2.3.7}) completes the proof of the theorem.		
\end{proof}
	
It is convenient for the rest of the analysis to have in hand an estimate for the exponential sum $ f_i (\alpha) = f(\lambda_i \alpha).$ 

	\begin{corollary} \label{cor2.3.5}
		Let $ \lambda$ be a fixed real number. Suppose that $ \kappa $ is a real number such that $ \kappa | \lambda | \geq 1 .$ Suppose further that $ t \geq  \frac{1}{2} \left( \lfloor 2\theta  \rfloor +1 \right) \left( \lfloor 2\theta  \rfloor +2 \right) $ is a natural number. Then for any fixed $ \epsilon > 0$ one has that			
			\begin{equation*}
				\int_{-\kappa}^{ \kappa} \left| f ( \lambda \alpha)  \right|^{2t} \normalfont \text d \alpha \ll \kappa P^{2t - \theta + \epsilon}.
			\end{equation*}		
		The implicit constant on the above estimate depends on $\epsilon, \lambda, t,$and $\theta,$ but not on $ \kappa$ and $P.$
	\end{corollary}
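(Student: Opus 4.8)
The plan is to deduce the corollary directly from Theorem \ref{thm2.1.4} by an elementary change of variables, so the argument will be short. Substituting $\beta = \lambda\alpha$ --- which is legitimate since $\lambda\neq 0$, a fact forced by the hypothesis $\kappa|\lambda|\geq 1$ --- and keeping track of the orientation of the interval of integration in the two cases $\lambda>0$ and $\lambda<0$, one obtains
\begin{equation*}
	\int_{-\kappa}^{\kappa}\left|f(\lambda\alpha)\right|^{2t}\,\text{d}\alpha = \frac{1}{|\lambda|}\int_{-\kappa|\lambda|}^{\kappa|\lambda|}\left|f(\beta)\right|^{2t}\,\text{d}\beta.
\end{equation*}

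Next I would apply Theorem \ref{thm2.1.4} with its parameter $\kappa$ replaced by the quantity $\kappa|\lambda|$. This is permissible precisely because the normalisation $\kappa|\lambda|\geq 1$ in the hypothesis places $\kappa|\lambda|$ in the admissible range $[1,\infty)$ required there, while the assumption $t\geq\frac{1}{2}(\lfloor 2\theta\rfloor+1)(\lfloor 2\theta\rfloor+2)$ is exactly the lower bound on the number of variables demanded by Theorem \ref{thm2.1.4}. It yields, for any fixed $\epsilon>0$,
\begin{equation*}
	\int_{-\kappa|\lambda|}^{\kappa|\lambda|}\left|f(\beta)\right|^{2t}\,\text{d}\beta \ll \kappa|\lambda|\, P^{2t-\theta+\epsilon},
\end{equation*}
with an implicit constant depending only on $\epsilon,\theta$ and $t$, and in particular independent of $\kappa$ and $P$.

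Combining the two displays, the factor $|\lambda|$ cancels and we are left with the asserted bound $\int_{-\kappa}^{\kappa}|f(\lambda\alpha)|^{2t}\,\text{d}\alpha\ll\kappa\, P^{2t-\theta+\epsilon}$; the implicit constant is now allowed to depend on $\lambda$ as well, which causes no difficulty since $\lambda$ is fixed, and it remains uniform in $\kappa$ and $P$. There is essentially no obstacle in this argument --- all the substance is already contained in Theorem \ref{thm2.1.4} --- and the only point worth emphasising is the role of the hypothesis $\kappa|\lambda|\geq 1$: it is precisely what guarantees that after rescaling we land in the regime where Theorem \ref{thm2.1.4} is available.
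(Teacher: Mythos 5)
Your proof is correct and follows essentially the same route as the paper: a change of variables $\beta = \lambda\alpha$ to transfer the estimate to Theorem \ref{thm2.1.4}, with the hypothesis $\kappa|\lambda|\geq 1$ ensuring the rescaled parameter lies in the admissible range. The only cosmetic difference is that the paper first reduces to $\int_0^\kappa$ via $f(-\alpha)=\overline{f(\alpha)}$ before substituting, while you substitute directly over the symmetric interval; both are equally valid.
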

	
	\begin{proof}
		Using the fact that $ f(-\alpha) = \overline{ f(\alpha)}$ for all $ \alpha \in \mathbb R $ and changing variables, we see that
			\begin{equation*}
				\begin{split}
					\int_{ 0}^{  \kappa} \left| f( \lambda \alpha ) \right|^{2t} \text d \alpha = \int_{ 0}^{  \kappa} \left| f( | \lambda | \alpha) \right|^{2t} \text d \alpha & = \frac{1}{ | \lambda |} \int_{0}^{  \kappa | \lambda |} \left| f(u) \right|^{2t} \text d u \\[10pt]
					& \ll_{\lambda} \int_{ -  \kappa | \lambda | }^{  \kappa | \lambda |} \left| f(u) \right|^{2t} \text d u.
				\end{split}
			\end{equation*}		
		Invoking Theorem \ref{thm2.1.4} we are done.
	\end{proof}
	

	
\section{Minor arcs analysis} \label{section_minor_arcs}

We begin the analysis of the analytical representation (\ref{eq2.2.11}) with the contribution coming from the minor arcs. Recall that this set is given by 
	\begin{equation*}
		\mathfrak{m} = \left\{ \alpha \in \mathbb R : P^{-\theta + \delta_0} \leq | \alpha | < P^{ \omega } \right\} .
	\end{equation*}
Define the intervals $ \mathfrak m^{+} = \left[ P^{- \theta +\delta_0}, P^{\omega } \right) $ and $ \mathfrak m^{-} = \left( -P^{\omega }, - P^{- \theta +\delta_0} \right] $ and note that $ \mathfrak m = \mathfrak m^{+}  \cup \mathfrak m^{-}.$ One has  $ f_i(- \alpha) = \overline{ f_i(\alpha)}$ for all $ \alpha \in \mathbb R .$ Moreover, the kernel functions $K_\pm (\alpha)$ are real valued and even. Recall (\ref{eq2.2.10}). By a change of variables one has
	\begin{equation}  \label{eq2.4.1}
		R_\pm \left(P ;\mathfrak m^{-} \right) = \overline{ R_\pm \left( P ;\mathfrak m^{+} \right)},
	\end{equation} 
where $ \overline{ R_\pm (P ; \mathfrak m^{+})}$ stands for the complex conjugate. Therefore it suffices to deal with the set $ \mathfrak m^{+}.$

We make use of the following variant of van der Corput's $k$-th derivative test, for bounding exponential sums.

	\begin{lemma} \label{lemma2.4.1}
		Let $q \geq 0$ be an integer. Suppose that $ f : (X, 2X] \to \mathbb R$ is a function having continuous derivatives up to the $(q+2)$-th order in $(X,2X].$ Suppose also there is some $F > 0,$  such that for all $ x \in (X, 2X]$ we have
			\begin{equation} \label{eq2.4.2}
				F X^{-r} \ll | f^{(r)} (x) | \ll FX^{-r},
			\end{equation}
		for $r =1, 2, \ldots, q+2.$ Then we have
			\begin{equation*}
				\sum_{X < x \leq 2X} e(f(x)) \ll F^{1/ (2^{q+2}-2)} X^{1 - (q+2)/(2^{q+2}-2)} + F^{-1}X,
			\end{equation*}
		with the implied constant depending only upon the implied constants in (\ref{eq2.4.2}).
	\end{lemma}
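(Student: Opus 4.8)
This is a classical van der Corput estimate, and the standard route is induction on $q$, combining the Kusmin--Landau/first-derivative estimate at the base with repeated application of the Weyl--van der Corput (A-process) inequality. First I would set up the base case $q=0$: given $FX^{-r} \ll |f^{(r)}(x)| \ll FX^{-r}$ for $r=1,2$, the hypothesis on $f''$ forces $|f''(x)| \asymp FX^{-2}$, so the second-derivative test (see for instance Titchmarsh or \cite{davenport_book}) yields
\begin{equation*}
	\sum_{X < x \leq 2X} e(f(x)) \ll \left( F X^{-2} \right)^{1/2} X + \left( F X^{-2} \right)^{-1/2} \ll F^{1/2} + F^{-1/2} X,
\end{equation*}
which is exactly the claimed bound when $q=0$, since $2^{q+2}-2 = 2$ and $1-(q+2)/(2^{q+2}-2) = 0$. (When $F$ is small this is dominated by the $F^{-1}X$ term after adjusting; one typically states the bound so that both terms are retained.)

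Next, the inductive step. Assume the result holds for $q-1$ and suppose $f$ satisfies the hypotheses with parameter $q$. I would apply the Weyl--van der Corput inequality: for any integer $1 \leq H \leq X$,
\begin{equation*}
	\left| \sum_{X < x \leq 2X} e(f(x)) \right|^2 \ll \frac{X^2}{H} + \frac{X}{H} \sum_{1 \leq h \leq H} \left| \sum_{x \in I_h} e\bigl( f(x+h) - f(x) \bigr) \right|,
\end{equation*}
where $I_h$ is a subinterval of $(X,2X]$. For each fixed $h$, set $g_h(x) = f(x+h)-f(x)$. By the mean value theorem $g_h^{(r)}(x) = h f^{(r+1)}(x+\vartheta h)$ for some $\vartheta \in (0,1)$, so the hypotheses give $g_h^{(r)}(x) \asymp hF X^{-r-1}$ for $r = 1,\dots,q+1$; that is, $g_h$ satisfies the hypotheses of the lemma with $q$ replaced by $q-1$ and $F$ replaced by $hFX^{-1}$. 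Applying the inductive hypothesis bounds the inner sum by
\begin{equation*}
	\left( \frac{hF}{X} \right)^{1/(2^{q+1}-2)} X^{1-(q+1)/(2^{q+1}-2)} + \frac{X}{hF} \cdot X = \left( \frac{hF}{X} \right)^{1/(2^{q+1}-2)} X^{1-(q+1)/(2^{q+1}-2)} + \frac{X^2}{hF}.
\end{equation*}

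Then I would substitute this back, sum over $1 \leq h \leq H$ (the $h$-dependence is a small power, so $\sum_h h^{1/(2^{q+1}-2)} \ll H^{1+1/(2^{q+1}-2)}$ and $\sum_h h^{-1} \ll \log X$), take square roots, and finally optimise the choice of $H$. Writing $Q = 2^{q+1}-2$ for brevity, the dominant contribution after summation is of the shape $X^2 H^{-1} + X^{2 - (q+1)/Q} (HF/X)^{1/Q} \cdot$ (logarithmic and constant factors); choosing $H$ to balance $X^2/H$ against the other term gives, on taking the square root, the bound $F^{1/(2Q+2)} X^{1-(q+2)/(2Q+2)}$ plus the $F^{-1}X$ term, and one checks $2Q+2 = 2(2^{q+1}-2)+2 = 2^{q+2}-2$ and $(q+2)/(2^{q+2}-2)$ matches the exponent claimed. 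The main obstacle — really the only delicate point — is bookkeeping: verifying that the $g_h$ genuinely satisfy the inductive hypotheses with the correct parameters (one must be careful that the differenced interval still has length $\asymp X$, splitting into $O(1)$ dyadic-type pieces if necessary, and that the lower bounds $g_h^{(r)}(x) \gg hFX^{-r-1}$ survive the mean value theorem step), and then carrying the arithmetic of the exponents through the optimisation so that the induction closes exactly. Everything else is routine; I would suppress the constant-chasing and cite \cite{davenport_book} for the base case and the Weyl differencing step.
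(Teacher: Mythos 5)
The paper gives no proof of this lemma; it is taken verbatim from \cite[Theorem 2.9]{graham_kolesnik_book}, so there is no argument in the paper against which to compare yours. Your route --- induction on $q$ via Weyl--van der Corput differencing --- is indeed the spirit in which such estimates are established, but your base case does not close. The second-derivative test with $|f''| \asymp FX^{-2}$ on an interval of length $\asymp X$ gives
\begin{equation*}
\sum_{X < x \leq 2X} e(f(x)) \ll X\bigl(FX^{-2}\bigr)^{1/2} + \bigl(FX^{-2}\bigr)^{-1/2} = F^{1/2} + F^{-1/2}X,
\end{equation*}
and the secondary term $F^{-1/2}X$ is \emph{not} dominated by $F^{1/2} + F^{-1}X$: taking $F \asymp X^{2/3}$ gives $F^{-1/2}X \asymp X^{2/3}$ while $F^{1/2} + F^{-1}X \asymp X^{1/3}$. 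Your parenthetical remark about the $F^{-1}X$ term taking over when $F$ is small does not repair this; the discrepancy occurs throughout the range $1 \ll F \ll X$. What is missing is any use of the \emph{first}-derivative hypothesis: when $F$ is small compared with $X$ one has $|f'(x)| \asymp FX^{-1} < 1/2$, and the Kusmin--Landau lemma gives $\sum e(f(x)) \ll (FX^{-1})^{-1} = F^{-1}X$ directly, so the second-derivative test need only be invoked when $F \gg X$, where $F^{-1/2}X \ll F^{1/2}$ and the unwanted term is absorbed.

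A corresponding case split on the size of $F$ is also needed to close the inductive step: summing the secondary terms $X^2/(hF)$ from the inductive hypothesis over $1 \leq h \leq H$ produces a contribution of order $X^3(\log H)/(HF)$ which, at the value of $H$ chosen to balance the main terms, is not visibly $\ll (F^{-1}X)^2$. The cleanest repair is the same as for the base case: dispose of the regime $F \ll X$ by Kusmin--Landau before any differencing, then run the induction only for $F \gg X$, where the $F^{-1}X$ term is harmless and the $H$-optimisation is unobstructed. These are precisely the bookkeeping hazards you flagged as "the only delicate point," but as written the argument has a genuine gap, and simply citing \cite[Theorem 2.9]{graham_kolesnik_book} as the paper does sidesteps the entire difficulty.
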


	\begin{proof}
		See \cite[Theorem 2.9]{graham_kolesnik_book}.
	\end{proof}
	
Recall that in (\ref{eq2.3.5}) we defined the exponential sum $g(\alpha) = g(\alpha ; P)$ by
	\begin{equation*}
		g(\alpha ; P) = \sum_{P < x \leq 2P } e(\alpha x^\theta).
	\end{equation*}
We put $ g_i (\alpha ) = g( \lambda_i \alpha ) \hspace{0.05in} (1\leq  i\leq s).$ Below we give a crude non-trivial upper bound for the exponential sum $f_i(\alpha)$ when $ \alpha \in  \mathfrak m^{+}.$ One can certainly improve this estimate. However, for our purposes the saving we obtain is sufficient.
	
	\begin{lemma} \label{lemma2.4.2}
		For each index $ 1 \leq  i \leq s $ one has for any fixed $ \epsilon > 0$ that
			\begin{equation} \label{eq2.4.3}
				\sup_{ \alpha \in \mathfrak m^{+}} \left| f_i (\alpha) \right| \ll P^{1 - 4^{-\theta} + \epsilon}.
			\end{equation}
	\end{lemma}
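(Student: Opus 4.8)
The strategy is to pass from $f_i(\alpha) = f(\lambda_i \alpha)$, a sum over $[1,P]$, to dyadic pieces $g(\lambda_i\alpha) = \sum_{X < x \le 2X} e(\lambda_i \alpha x^\theta)$ with $X$ ranging over $O(\log P)$ values $\le P$, and to estimate each dyadic piece using the van der Corput-type bound of Lemma \ref{lemma2.4.1}. Since the range $X \asymp P$ dominates, it suffices to obtain a bound of the requested shape for $g(\lambda_i\alpha)$ with $X \asymp P$ and then sum trivially over the $O(\log P)$ dyadic blocks, absorbing the logarithm into $P^\epsilon$.

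\textbf{Key steps.} First I would fix an index $i$ and write $\phi(x) = \lambda_i \alpha x^\theta$ on an interval $(X, 2X]$ with $X \le P$, so that $\phi^{(r)}(x) \asymp |\lambda_i \alpha|\, X^{\theta} \cdot X^{-r} = F X^{-r}$ with $F := |\lambda_i|\,|\alpha|\, X^\theta$; here the implied constants depend only on $\theta$ and $r$ because $\theta(\theta-1)\cdots(\theta-r+1) \ne 0$ for the finitely many $r$ in play (this is where non-integrality of $\theta$ is harmless — none of these falling factorials vanish). Next I would choose the integer parameter $q$ in Lemma \ref{lemma2.4.1} appropriately (a fixed $q$ depending only on $\theta$, large enough that the resulting exponent beats $1 - 4^{-\theta}$ across the whole minor-arc range), giving
\begin{equation*}
	g(\lambda_i \alpha) \ll F^{1/(2^{q+2}-2)} X^{1 - (q+2)/(2^{q+2}-2)} + F^{-1} X.
\end{equation*}
Then I would substitute $F = |\lambda_i|\,|\alpha|\,X^\theta$ and verify, using the two defining inequalities of $\mathfrak m^+$, namely $P^{-\theta+\delta_0} \le |\alpha| < P^\omega$ with $\delta_0 = 4^{-\theta}$ and $\omega = \min\{(1-\gamma)/12,\, 5^{-100\theta}\}$, that both terms are $\ll P^{1 - 4^{-\theta} + \epsilon}$ uniformly in $\alpha \in \mathfrak m^+$ and in $X \le P$. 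For the second term $F^{-1}X = (|\lambda_i|\,|\alpha|)^{-1} X^{1-\theta}$: the worst case is $X \asymp P$, where it is $\ll |\alpha|^{-1} P^{1-\theta} \le P^{\theta - \delta_0} P^{1-\theta} = P^{1-\delta_0} = P^{1-4^{-\theta}}$, using $|\alpha| \ge P^{-\theta+\delta_0}$. For the first (main) term, $F^{1/(2^{q+2}-2)} X^{1-(q+2)/(2^{q+2}-2)}$, I would bound $F \ll |\lambda_i| P^\omega X^\theta \ll P^{\omega + \theta}$ (for $X \le P$) and check that with $q$ chosen large the exponent
\begin{equation*}
	\frac{\omega + \theta}{2^{q+2}-2} + 1 - \frac{q+2}{2^{q+2}-2}
\end{equation*}
is at most $1 - 4^{-\theta}$; since $\omega$ is tiny (in particular $\omega \le 5^{-100\theta}$) and the negative contribution $-(q+2)/(2^{q+2}-2)$ can be made to dominate $\theta/(2^{q+2}-2)$ plus a margin of $4^{-\theta}$ by taking $q$ large in terms of $\theta$, this holds. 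Finally I would sum over the $O(\log P)$ dyadic ranges and over $\alpha \in \mathfrak m^+$ (the bound being uniform), and invoke $f_i(-\alpha) = \overline{f_i(\alpha)}$ together with \eqref{eq2.4.1}-type symmetry to cover $\mathfrak m^-$ as well.

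\textbf{Main obstacle.} The delicate point is the bookkeeping that makes the choice of $q$ work simultaneously across the entire minor-arc range: one needs a single fixed $q = q(\theta)$ so that the van der Corput exponent beats $1 - 4^{-\theta}$ both when $|\alpha|$ is as large as $P^\omega$ (which inflates $F$, hurting the first term) and when $|\alpha|$ is as small as $P^{-\theta+\delta_0}$ (which is the binding case for the $F^{-1}X$ term), and uniformly in the dyadic parameter $X \le P$. This is not conceptually hard, but it is where the specific values $\delta_0 = 4^{-\theta}$ and the smallness of $\omega$ from \eqref{eq2.2.9} are used, and care is required because the two error terms in Lemma \ref{lemma2.4.1} are optimized in opposite directions. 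Everything else — the derivative estimates \eqref{eq2.4.2}, the dyadic decomposition, the handling of $\mathfrak m^-$ — is routine.
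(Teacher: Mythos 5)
Your overall strategy matches the paper's (dyadic decomposition, van der Corput $q$-th derivative test applied to each dyadic block, separate treatment of the two terms), and your analysis of the term $F^{-1}X$ is correct. But there is a genuine gap in your choice of $q$. You need
\begin{equation*}
\frac{q+2-\theta-\omega}{2^{q+2}-2} \;\geq\; 4^{-\theta},
\end{equation*}
and you claim this is achieved "by taking $q$ large in terms of $\theta$." This is backwards: the numerator grows only linearly in $q$ while the denominator grows exponentially, so the left-hand side tends to $0$ as $q\to\infty$. Taking $q$ large therefore destroys the van der Corput savings rather than improving it — in the extreme, the bound $F^{1/(2^{q+2}-2)}X^{1-(q+2)/(2^{q+2}-2)}$ tends to $X$, which is trivial. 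The condition also fails for $q$ too small (then $q+2-\theta-\omega<0$ and the test gives nothing). The correct choice is a specific moderate value near the maximum of $q\mapsto (q+2-\theta-\omega)/(2^{q+2}-2)$; the paper takes $q=\lfloor\theta\rfloor=n$. With this choice the numerator becomes $n+2-\theta-\omega = 2-\gamma-\omega > 1$ (using $\gamma=\theta-n\in(0,1)$ and $\omega\le(1-\gamma)/12$), and one checks directly that $\tfrac{1}{2^{n+2}-2} > \tfrac{1}{4\cdot 2^{n}} > 4^{-n}4^{-\gamma}=4^{-\theta}$ for $n\geq 2$, which is what makes the ad hoc value $\delta_0=4^{-\theta}$ from \eqref{eq2.2.9} work. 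This is a numerical verification tied to a particular $q$, not a limit argument, and it is the one step in the lemma that actually requires care. (A secondary remark: your observation that "the two error terms in Lemma \ref{lemma2.4.1} are optimized in opposite directions" is not quite right either — the term $F^{-1}X$ does not depend on $q$ at all; the trade-off in $q$ lives entirely inside the first term.)
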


	\begin{proof}
		Fix an index $i.$ It suffices to show that
			\begin{equation*}
				\sup_{ \alpha \in \mathfrak m^{+}} \left| g_i (\alpha) \right| \ll P^{1 - 4^{-\theta}}.
			\end{equation*}
		Then one may split the interval $[1,P]$ into $ O \left( \log P \right)$ dyadic intervals and the desired conclusion follows.  
		
		We set $ \phi (x) = \lambda_i \alpha x^\theta.$ For each integer $ r \geq 1$  one has $\phi^{(r)} (x) = C_r \alpha x^{\theta -r},$ where we put $ C_r = \lambda_i  \theta (\theta -1) \cdots  (\theta - r+1).$ It is apparent that for $ P  < x \leq 2P$ one has
			\begin{equation*}
				\left| 	\phi^{(r)} (x) \right| \asymp F P^{-r},
			\end{equation*}
		where $ F = | C_r| | \alpha| P^\theta.$ For $ \alpha \in \mathfrak m^{+} = \left[ P^{-\theta + \delta_0}, P^\omega \right)$ one has
			\begin{equation*}
				 | C_r| P^{\delta_0} \leq F < |C_r| P^{\theta + \omega}.
			\end{equation*}
		
		We apply Lemma \ref{lemma2.4.1} with $ q =n,$ where $ n = \lfloor \theta \rfloor$ is the integer part of $ \theta.$ This yields that for any $ \alpha \in \mathfrak m^{+}$ one has
			\begin{equation*}
				\left| g_i (\alpha ; X) \right| \ll_{\lambda_i, \theta} P^{ 1 - \eta} + P^{1 - \delta_0},
			\end{equation*}
		where 
			\begin{equation*}
				\eta = \frac{n+2 - \theta - \omega}{2^{n+2}-2}.
			\end{equation*}
		Upon recalling (\ref{eq2.2.9}) one may easily verify that $ \eta > 4^{-\theta}$ which completes the proof.
	\end{proof}

By (\ref{eq2.2.4}) one has $|K_\pm (\alpha) | \ll 1.$ An application of H{\"o}lder's inequality reveals that
	\begin{equation*}				
		\int_{\mathfrak m^{+}}  | f_1 (\alpha) \cdots f_s( \alpha)  K_{\pm} (\alpha) | \text{d} \alpha \ll
		\left( \int_{\mathfrak m^{+}} | f_1 (\alpha) |^s \text{d} \alpha \right)^{1/s} \cdots \left( \int_{ \mathfrak m^{+}} | f_s (\alpha) |^s \text{d} \alpha  \right)^{1/s}.   			
	\end{equation*}	
We set $ \kappa = P^\omega.$ Note that for large enough $P$ one has $ P^\omega | \lambda_i | \geq 1.$ Combining Corollary \ref{cor2.3.5} and the upper bound recorded in (\ref{eq2.4.3}) we deduce that for any fixed $ \epsilon > 0$ one has
	\begin{equation*}
		\begin{split}
				\int_{ \mathfrak m^{+} }	| f_i (\alpha) |^s \text{d} \alpha  & \ll \left( \sup_{\alpha \in \mathfrak m^{+} }  | f_i (\alpha) | \right)^{s-2t} \int_{-P^{\omega}}^{P^\omega}  | f_i (\alpha) |^{2t} \text{d} \alpha \\[10pt]
				& \ll P^{s - \theta} \cdot P^{- 4^{-\theta}(s-2t) + \omega + \epsilon(s-2t+1)},
		\end{split}
	\end{equation*}
provided that $ s > 2t \geq (\lfloor 2 \theta \rfloor + 1) (\lfloor 2 \theta \rfloor + 2) .$ Choosing  $ \epsilon = 5^{-100\theta} > 0$ as we are at liberty to do and recalling from (\ref{eq2.2.9}) that $ \omega \leq 5^{-100 \theta},$ we infer that
	\begin{equation*}
	 	\int_{ \mathfrak m^{+} }	| f_i (\alpha) |^s \text{d} \alpha  \ll P^{s- \theta} \cdot P^{ - 4^{-\theta}(s-2t) + 5^{-100\theta}(s- 2t +2)} = o \left( P^{s - \theta} \right).
	\end{equation*}
In the light of (\ref{eq2.4.1}) we have established the following.
	
	\begin{lemma} \label{lemma2.4.3}
		One has
			\begin{equation*}
				\int_{\mathfrak m} | f_1(\alpha) \cdots f_s(\alpha) K_\pm(\alpha) | \normalfont\text{d} \alpha = o \left( P^{s-\theta} \right),
			\end{equation*}			
		provided $ s \geq (  \lfloor 2 \theta \rfloor + 1) (  \lfloor 2 \theta \rfloor + 2) +1.$ 	
	\end{lemma}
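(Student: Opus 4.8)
The plan is simply to assemble the ingredients prepared in the preceding discussion. First I would invoke the symmetry recorded in (\ref{eq2.4.1}): since $f_i(-\alpha) = \overline{f_i(\alpha)}$ and the kernels $K_\pm$ are even, a change of variables $\alpha \mapsto -\alpha$ gives $\int_{\mathfrak m^{-}} |f_1 \cdots f_s K_\pm| \, \text{d}\alpha = \int_{\mathfrak m^{+}} |f_1 \cdots f_s K_\pm| \, \text{d}\alpha$, so it suffices to show
\[
	\int_{\mathfrak m^{+}} \left| f_1(\alpha) \cdots f_s(\alpha) K_\pm(\alpha) \right| \text{d}\alpha = o\left( P^{s-\theta} \right).
\]
By the trivial bound $|K_\pm(\alpha)| \ll 1$ coming from (\ref{eq2.2.4}), followed by H{\"o}lder's inequality with $s$ factors each carrying exponent $s$, this in turn follows once one shows $\int_{\mathfrak m^{+}} |f_i(\alpha)|^s \, \text{d}\alpha = o(P^{s-\theta})$ for each individual index $1 \le i \le s$.

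Fixing such an $i$, I would set $t = \tfrac12 (\lfloor 2\theta \rfloor + 1)(\lfloor 2\theta \rfloor + 2)$, which is a natural number (half of a product of two consecutive integers), and note that the hypothesis $s \ge (\lfloor 2\theta\rfloor+1)(\lfloor 2\theta\rfloor+2)+1$ forces $s - 2t \ge 1$. Then I would peel off $s-2t$ copies of $f_i$ via the pointwise bound and retain $2t$ copies for a mean value:
\[
	\int_{\mathfrak m^{+}} |f_i(\alpha)|^s \, \text{d}\alpha \le \left( \sup_{\alpha \in \mathfrak m^{+}} |f_i(\alpha)| \right)^{s-2t} \int_{-P^{\omega}}^{P^{\omega}} |f_i(\alpha)|^{2t} \, \text{d}\alpha.
\]
The supremum is controlled by Lemma \ref{lemma2.4.2}, contributing $P^{(1 - 4^{-\theta} + \epsilon)(s-2t)}$, while the mean value is controlled by Corollary \ref{cor2.3.5} applied with $\kappa = P^{\omega}$ (legitimate for large $P$, since then $P^{\omega}|\lambda_i| \ge 1$), contributing $P^{\omega} P^{2t - \theta + \epsilon}$. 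Multiplying these gives the bound $P^{s-\theta} \cdot P^{-4^{-\theta}(s-2t) + \omega + \epsilon(s - 2t + 1)}$.

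The one genuine point, and the step I expect to require the most care, is checking that the exponent of the auxiliary factor is strictly negative, so that the bound really is $o(P^{s-\theta})$. Here I would use the calibration built into (\ref{eq2.2.9}): choosing $\epsilon = 5^{-100\theta}$ (permissible since $\epsilon>0$ is arbitrary) and recalling $\omega \le 5^{-100\theta}$, the exponent is at most $-4^{-\theta}(s-2t) + 5^{-100\theta}(s-2t+2)$. Since $s - 2t \ge 1$ we have $s - 2t + 2 \le 3(s-2t)$, and since $\theta > 2$ the quantity $5^{-100\theta}$ is vastly smaller than $\tfrac13 4^{-\theta}$; hence the exponent is negative and the displayed quantity is $o(P^{s-\theta})$. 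Feeding this back through H{\"o}lder's inequality and (\ref{eq2.4.1}) then yields $\int_{\mathfrak m} |f_1(\alpha) \cdots f_s(\alpha) K_\pm(\alpha)| \, \text{d}\alpha = o(P^{s-\theta})$, which completes the proof.
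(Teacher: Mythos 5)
Your proposal is correct and takes essentially the same route as the paper: reduce to $\mathfrak m^{+}$ via (\ref{eq2.4.1}), bound $|K_\pm| \ll 1$, apply H\"older to isolate a single $|f_i|^s$, split the exponent as $(s-2t) + 2t$ using the Weyl-type bound of Lemma \ref{lemma2.4.2} for the sup and Corollary \ref{cor2.3.5} (with $\kappa = P^\omega$) for the mean value, and then use the choice $\epsilon = 5^{-100\theta}$ together with $\omega \le 5^{-100\theta}$ to make the correction exponent negative. The only cosmetic difference is that you spell out the inequality $s - 2t + 2 \le 3(s - 2t)$ and the comparison $5^{-100\theta} < \tfrac13 4^{-\theta}$ more explicitly than the paper, which just asserts the final bound.
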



	
\section{Trivial arcs analysis} \label{section_complementary_arcs}

In this section we deal with the set of trivial arcs. Recall that this set is given by
	\begin{equation*}
		\mathfrak t = \{ \alpha \in \mathbb R : | \alpha | \geq P^{ \omega}  \}. 
	\end{equation*}
Define $ \mathfrak t^{+} = [P^\omega, \infty)$ and $ \mathfrak t^{-} = (-\infty, -P^\omega]$ and note that $ \mathfrak t =  \mathfrak t^{+} \cup \mathfrak t^{-}.$ Recall (\ref{eq2.2.10}). A change of variables as in section \ref{section_minor_arcs} yields
	\begin{equation} \label{eq2.5.1}
		R_\pm \left( P ;\mathfrak t^{-} \right) = \overline{ R_\pm \left( P ;\mathfrak t^{+} \right)}.
	\end{equation}
Hence it suffices to deal with the set $ \mathfrak t^{+}.$ By (\ref{eq2.2.4}) with $h=1$ one has that
	\begin{equation*} 
		\int_{\mathfrak t^{+}} | f_1(\alpha) \cdots f_s(\alpha)  K_\pm(\alpha)| \text d \alpha  \ll \sum_{ j = \lfloor \omega  \log_2 P \rfloor }^\infty \frac{ \left(\log P \right)}{ 2^{2j} } \int_{2^j}^{ 2^{j+1}}  | f_1(\alpha) \cdots f_s(\alpha) | \text d \alpha.
	\end{equation*}
	
An application of H\"{o}lder's inequality yields
	\begin{equation} \label{eq2.5.2}
		\int_{2^j}^{ 2^{j+1}} | f_1 (\alpha) \cdots f_s (\alpha) | \text{d} \alpha	\ll \left( \prod_{i=1}^s \int_{2^j}^{ 2^{j+1}} | f_i ( \alpha)|^s \text{d} \alpha \right)^{1/s}. 
	\end{equation}
Define $ s_0 = (\lfloor 2 \theta\rfloor +1)( \lfloor 2 \theta \rfloor +2)$ and note that $s_0$ is even. In making the trivial estimate
	\begin{equation*}
		\left| f_i (\alpha) \right| = O \left( P \right)
	\end{equation*}
it follows that
	\begin{equation*}
		\int_{ 2^j }^{ 2^{j+1} }  | f_i ( \alpha)|^s \text d \alpha \ll	 P^{s-s_0} 	\int_{ 2^j }^{ 2^{j+1} }  | f_i ( \alpha)|^{s_0} \text d \alpha,
	\end{equation*}
provided that $ s \geq s_0.$ For sufficiently large $P$ and for $  j \geq \lfloor \omega  \log_2 P \rfloor +1$ one has $ 2^{j+1} | \lambda_i| \geq 1 $ for each index $i.$ Invoking Corollary \ref{cor2.3.5} the above estimate yields that for any fixed $ \epsilon > 0$ one has
	\begin{equation*}
		\int_{ 2^j }^{ 2^{j+1} }  | f_i ( \alpha)|^s \text d \alpha \ll	 2^{j+1} P^{s-\theta + \epsilon} \hspace{0.5in} (1 \leq i \leq s).
	\end{equation*}
 By (\ref{eq2.5.2}) we infer that 
	\begin{equation*}
		\int_{\mathfrak t^{+}} | f_1(\alpha) \cdots f_s(\alpha)  K_\pm(\alpha)| \text d \alpha  \ll P^{ s- \theta + \epsilon}   \sum_{ j = \lfloor \omega \log_2 P \rfloor  }^\infty \frac{1}{ 2^{j }}.
	\end{equation*}

Clearly one has
	\begin{equation*}
		\sum_{ j = \lfloor \omega \log_2 P \rfloor  }^\infty \frac{1}{ 2^{j }} \ll P^{- \omega}.
	\end{equation*}
Hence by choosing $ \epsilon = \frac{\omega}{2} > 0$ the previous estimate delivers
		\begin{equation*}
			\int_{\mathfrak t^{+}} | f_1(\alpha) \cdots f_s(\alpha)  K_\pm(\alpha)| \text d \alpha  \ll P^{s- \theta  - \frac{\omega}{2} } =  o \left( P^{s-\theta} \right).
		\end{equation*}
In the light of (\ref{eq2.5.1}) we have established the following.

	\begin{lemma} \label{lemma2.5.1}
		One has
			\begin{equation*}
				\int_{\mathfrak t} | f_1(\alpha) \cdots f_s(\alpha) K_\pm(\alpha) | \normalfont  \text d \alpha = o\left( P^{s-\theta}\right),
			\end{equation*}
		provided $ s \geq  (  \lfloor 2 \theta \rfloor +1) (  \lfloor 2 \theta \rfloor +2).$
	\end{lemma}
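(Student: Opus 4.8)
The plan is to assemble the bounds developed in the paragraphs immediately preceding the statement. By the symmetry identity (\ref{eq2.5.1}), which is a consequence of $f_i(-\alpha)=\overline{f_i(\alpha)}$ together with the fact that each $K_\pm$ is real and even, it is enough to estimate the integral over $\mathfrak t^{+}=[P^\omega,\infty)$. First I would invoke the kernel decay: taking $h=1$ in (\ref{eq2.2.4}) gives $K_\pm(\alpha)\ll(\log P)\,|\alpha|^{-2}$ on $\mathfrak t^{+}$, so a dyadic decomposition of the range $|\alpha|\ge P^\omega$ reduces the task to controlling
\[
\sum_{j\ge\lfloor\omega\log_2 P\rfloor}\frac{\log P}{2^{2j}}\int_{2^j}^{2^{j+1}}|f_1(\alpha)\cdots f_s(\alpha)|\,\text{d}\alpha,
\]
and an application of H\"older's inequality replaces the product on each dyadic block by the diagonal mean values $\int_{2^j}^{2^{j+1}}|f_i(\alpha)|^s\,\text{d}\alpha$.

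Next I would feed these mean values into Corollary \ref{cor2.3.5}. Writing $s_0=(\lfloor 2\theta\rfloor+1)(\lfloor 2\theta\rfloor+2)$, which is even, the trivial bound $|f_i(\alpha)|\ll P$ extracts a factor $P^{s-s_0}$ whenever $s\ge s_0$, while Corollary \ref{cor2.3.5} applied with $\kappa=2^{j+1}$ — legitimate for $P$ large, since then $2^{j+1}|\lambda_i|\ge 1$ throughout the range of summation — yields $\int_{2^j}^{2^{j+1}}|f_i(\alpha)|^{s_0}\,\text{d}\alpha\ll 2^{j+1}P^{s_0-\theta+\epsilon}$ for any fixed $\epsilon>0$. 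Hence each dyadic block contributes $\ll 2^{-j}P^{s-\theta+\epsilon}$; summing the geometric series $\sum_{j\ge\lfloor\omega\log_2 P\rfloor}2^{-j}\ll P^{-\omega}$ and then choosing $\epsilon=\omega/2$ produces the bound $\ll P^{s-\theta-\omega/2}=o(P^{s-\theta})$, which together with (\ref{eq2.5.1}) is exactly the claim.

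The point requiring the most care — and essentially the only obstacle — is that the kernel must decay quickly enough to absorb the loss $2^j$ that is linear in the length of the dyadic block and comes from Corollary \ref{cor2.3.5}. The estimate $K_\pm(\alpha)\ll|\alpha|^{-2}$ is precisely what is needed: it leaves a net decay $2^{-j}$ which is summable, and crucially the sum begins at $j\asymp\omega\log_2 P$ rather than at $j=0$, so it contributes the genuine power saving $P^{-\omega}$ rather than a harmless constant. Everything else — that $s\ge s_0$ suffices for the trivial-bound step and that $\epsilon$ may be taken arbitrarily small in Corollary \ref{cor2.3.5} — is immediate from the results already at our disposal.
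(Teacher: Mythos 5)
Your argument reproduces the paper's proof essentially verbatim: reduce to $\mathfrak t^{+}$ by symmetry, decay of $K_\pm$ with $h=1$ plus dyadic decomposition, H\"older to pass to $s$-th power moments, peel off $P^{s-s_0}$ trivially with $s_0=(\lfloor 2\theta\rfloor+1)(\lfloor 2\theta\rfloor+2)$, feed the remaining $s_0$-th moment into Corollary~\ref{cor2.3.5} with $\kappa=2^{j+1}$, and sum the resulting geometric series from $j\asymp\omega\log_2 P$ to obtain the power saving $P^{-\omega}$, concluding by taking $\epsilon=\omega/2$. The only cosmetic difference is that the paper silently absorbs the $\log P$ factor from the kernel bound into $P^{\epsilon}$, which you do as well; the reasoning is sound.
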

	
	

\section{Major arc analysis and the asymptotic formula}  \label{section_major_arc_asympt_formular}

Now we deal with the contribution of the major arc
	\begin{equation*}
		\mathfrak M = \{ \alpha \in \mathbb R : |\alpha| < P^{- \theta + \delta_0} \}
	\end{equation*}
around zero. The corresponding analytical approximations for the generating functions  $f_i( \alpha) $ are given by
	\begin{equation} \label{eq2.6.1}
		\upsilon_i (\alpha) = \upsilon(\lambda_i \alpha) = \int_0^P e(\lambda_i \alpha \gamma^\theta) \text d \gamma \hspace{0.5in} (1\leq i \leq s).
	\end{equation}
An application of partial summation delivers 
	\begin{equation*} \label{major_arc_difference_f_upsilon}
		f_i( \alpha) - \upsilon_i(\alpha) = O\left( 1 + P^\theta | \alpha | \right),
	\end{equation*}
uniformly for $ \alpha \in \mathbb R.$ Thus for $ \alpha \in \mathfrak M$ one has
	\begin{equation*}
		f_i (\alpha) - \upsilon_i (\alpha) \ll P^{\delta_0}.
	\end{equation*}
The above estimate in combination with the trivial bounds $|f_i(\alpha)|, |\upsilon_i(\alpha)| \leq P$ and the telescoping sum 
	\begin{equation*}
		f_1 \cdots f_s - \upsilon_1 \cdots \upsilon_s = \sum_{i=1}^s f_1 \cdots f_{i-1} (f_i - \upsilon_i) \upsilon_{i+1} \cdots \upsilon_s,
	\end{equation*}
reveals that for $ \alpha \in \mathfrak M$ one has 
	\begin{equation*}
		f_1(\alpha) \cdots  f_s(\alpha) - \upsilon_1(\alpha) \cdots \upsilon_s(\alpha)  \ll  O(P^{s - 1 + \delta_0}).
	\end{equation*}

Integrating over $ \mathfrak M$ yields
	\begin{equation} \label{eq2.6.2}
		\begin{split}
			\int_{\mathfrak M} f_1( \alpha) \cdots f_s( \alpha)  K_{\pm}( \alpha) \text{d} \alpha -  \int_{\mathfrak M} \upsilon_1(\alpha) \cdots \upsilon_s(\alpha) K_{\pm}(\alpha) \text{d} \alpha 
			& \ll \int_{\mathfrak M} P^{s-1 + \delta_0} \text{d} \alpha \\
			& = P^{s-1 + \delta_0} \text{meas}\left(\mathfrak M \right) \\
			& \asymp P^{s- \theta - 1 + 2\delta_0},								
		\end{split}
	\end{equation}
where in the last step we used the fact $ \text{meas}\left( \mathfrak M\right) \asymp P^{-\theta + \delta_0}.$ By (\ref{eq2.2.4}) one has $ | K_{\pm} (\alpha)| \ll 1.$  Using integration by parts one has 
	\begin{equation} \label{eq2.6.3}
		\upsilon_i(\alpha) \ll_{\lambda_i} \min\{ P, | \alpha|^{- 1/ \theta}\} \ll_{\lambda_i} \frac{P}{\left( 1+ P^\theta | \alpha| \right)^{1/\theta}} \hspace{0.2in} (1 \leq i \leq s).
	\end{equation}
So we deduce that
	\begin{equation} \label{eq2.6.4}
		\int_{ \mathbb R \setminus \mathfrak M} \upsilon_1 (\alpha) \cdots \upsilon_s (\alpha) K_{\pm} ( \alpha) \text{d} \alpha   \ll \int_{ | \alpha | > P^{-\theta + \delta_0} } | \alpha |^{- s/ \theta} \text{d} \alpha \ll P^{s- \theta - \delta_0( s / \theta -1)},
	\end{equation}
where in the last step we used the hypothesis $ s > 2 \theta.$

The singular integral of our problem is given by
	\begin{equation} \label{eq.2.6.5}
		\mathcal I_{\pm} = \int_{-\infty}^\infty \upsilon_1(\alpha) \cdots \upsilon_s(\alpha) K_{\pm}(\alpha) \text{d} \alpha.
	\end{equation}	
Note that by (\ref{eq2.2.4}) and (\ref{eq2.6.3}) the integral $ \mathcal I_\pm $ is well defined and absolutely convergent. Combining (\ref{eq2.6.2}) and (\ref{eq2.6.4}) and since $ 2 \delta_0 < 1,$  we see that 
	\begin{equation} \label{eq2.6.6}
		\mathcal I_{\pm} = \int_{\mathfrak M} f_1(\alpha) \cdots f_s(\alpha) K_{\pm}(\alpha) \normalfont\text{d} \alpha   + o \left( P^{s -\theta}  \right).
	\end{equation}	

For $ \alpha \in \mathbb R $ we put
	\begin{equation} \label{eq2.6.7}
		\Phi( \alpha) = \upsilon_1(\alpha) \cdots \upsilon_s(\alpha) = \int_{ [0,P]^s} e \left( \alpha ( \lambda_1 \gamma_1^\theta + \cdots +\lambda_s \gamma_s^\theta)\right) \text d  \tuplegamma.
	\end{equation}
In view of (\ref{eq2.6.3}) we see that $ \Phi$ is an integrable function. Making a change of variables by putting $ \gamma_i = P \left( \beta_i | \lambda_i|^{-1} \right)^{ 1 / \theta} \hspace{0.05in} (1 \leq i \leq s)$ yields
	\begin{equation} \label{eq2.6.8}
		\Phi(\alpha) =\left(\frac{P}{\theta} \right)^s | \lambda_1 \cdots \lambda_s|^{-1/\theta} \int_{\mathcal B}  ( \beta_1 \cdots \beta_s)^{1/\theta -1} e\left( \alpha P^\theta( \sigma_1 \beta_1 + \cdots + \sigma_s \beta_s)\right) \text d  \tuplebeta,
	\end{equation}
where $ \sigma_i = \lambda_i / | \lambda_i| \in \{\pm 1\} $ are not all equal and $ \mathcal B = [ 0, | \lambda_1|] \times \cdots \times [0, |\lambda_s|].$  Let $ \widetilde \beta \in \mathbb R $ be a parameter. We now write $ \hspace{0.05in} \mathcal U ( \widetilde \beta) = \mathcal U (\widetilde \beta ; \tuplelambda) \subset \mathbb R^{s-1}$ for the domain defined through the linear inequalities
	\begin{equation*}
		0 \leq \beta_i \leq | \lambda_i | \hspace{0.1in} (1\leq i \leq s-1), \hspace{0.3in} 0  \leq \widetilde \beta - \sigma_s \sigma_1 \beta_1 - \cdots - \sigma_s \sigma_{s-1} \beta_{s-1} \leq  | \lambda_s |.
	\end{equation*} 
We set
	\begin{equation*}
		\Psi_0( \widetilde \beta) = \int_{\mathcal U(\widetilde \beta) } \left( \widetilde \beta -  \sigma_s \sigma_1 \beta_1 - \cdots - \sigma_s \sigma_{s-1} \beta_{s-1} \right)^{1/ \theta -1} ( \beta_1 \cdots \beta_{s-1})^{1/ \theta -1} \text d \beta_1 \cdots \text d \beta_{s-1}.
	\end{equation*}
For $ \widetilde \beta \in [0, | \lambda_s|]$ the map $ \widetilde \beta \mapsto \Psi_0( \widetilde \beta)$ defines a non-negative and continuous function. Put
	\begin{equation} \label{eq2.6.9}
		\Psi ( \widetilde \beta) = 
				\begin{cases}
					\Psi_0( \widetilde \beta), \hspace{0.1in} & \text{ if} \hspace{0.1in} \widetilde \beta  \in [0, | \lambda_s |], \\[10pt]
					0,  \hspace{0.1in} & \text{otherwise}.
				\end{cases}
	\end{equation}
Note that $ \Psi ( \widetilde \beta) $ is a non-negative and compactly supported function defined over $ \mathbb R ,$ which has precisely two points of discontinuity, at $ \widetilde \beta = 0, | \lambda_s |.$ We set 
	\begin{equation*} 
		\widetilde \beta = \beta_s + \sigma_1 \sigma_s \beta_1 + \cdots + \sigma_{s-1} \sigma_s \beta_{s-1}.
	\end{equation*}
Replace in (\ref{eq2.6.8}) the variable $ \beta_s $ by $ \widetilde \beta .$ Letting now $ \widetilde \beta $ vary through $ \mathbb R $ and using the fact that $ \Psi( \widetilde \beta)$ is compactly supported we obtain
	\begin{equation} \label{eq2.6.10}
		\Phi(\alpha) =\left(\frac{P}{\theta} \right)^s | \lambda_1 \cdots \lambda_s|^{-1/\theta} \int_{- \infty}^{\infty} \Psi( \widetilde \beta) e ( \alpha P^\theta \sigma_s \widetilde \beta) \text d  \widetilde \beta.
	\end{equation}
Since $ \Phi$ and $ \Psi$ are integrable we may apply Fourier's inversion theorem. Together with a substitution that replaces $ \alpha $ by $ \alpha P^{- \theta}$  we obtain that
	\begin{equation} \label{eq2.6.11}
		\Psi ( \widetilde \beta) = \left( \frac{\theta}{P} \right)^s | \lambda_1 \cdots \lambda_s |^{1/ \theta} \int_{-\infty}^{\infty} \Phi( \alpha P^{-\theta}) e( - \sigma_s \widetilde \beta \alpha) \text d \alpha. 
	\end{equation}
Putting together (\ref{eq.2.6.5}), (\ref{eq2.6.8}) and (\ref{eq2.6.10}), we infer that
	\begin{equation} \label{eq2.6.12}
		\begin{split}
			\mathcal I_{\pm} & = \left(\frac{P}{\theta} \right)^s | \lambda_1 \cdots \lambda_s|^{-1/\theta} \int_{- \infty}^\infty \int_{-\infty}^{\infty} \Psi (\widetilde \beta) e( \alpha P^\theta \sigma_s \widetilde \beta) K_\pm (\alpha) \text d \alpha \text d \widetilde \beta \\[15pt]
			& = \left(\frac{P}{\theta} \right)^s | \lambda_1 \cdots \lambda_s|^{-1/\theta} 	\int_{- \infty}^\infty 	\Psi (\widetilde \beta)  \left( \int_{- \infty}^\infty  e( \alpha P^\theta \sigma_s \widetilde \beta) K_\pm (\alpha) \text d \alpha \right) \text d \widetilde{\beta}.
		\end{split}
	\end{equation}

By the comment following (\ref{eq2.2.7}) one has  
	\begin{equation} \label{eq2.6.13}
		\int_{-\infty}^\infty e( \alpha P^\theta \sigma_s \widetilde \beta) K_\pm (\alpha) \text d \alpha = \chi_\tau \left( P^\theta \sigma_s \widetilde \beta \right),
	\end{equation}
unless $ \widetilde \beta $ satisfies the relation $ \left| |  P^\theta \sigma_s \widetilde \beta | - \tau \right| < \widetilde \tau,$ where recall that we have set $ \widetilde \tau = \tau \left( \log  P \right)^{-1}.$ The measure of the set of points $ \widetilde \beta $ which satisfy the latter inequality is $ O \left( \tau P^{-\theta} \left( \log P \right)^{-1} \right).$ The contribution coming from this set of $\widetilde \beta$ is $o \left(P^{s-\theta}\right),$ and hence one may ignore this set. Therefore, we may assume from now on that (\ref{eq2.6.13}) is valid. Recalling that $ \chi_\tau ( \cdot) $ denotes the characteristic function of the interval $(- \tau, \tau),$ we may rewrite  (\ref{eq2.6.13}) as
	\begin{equation} \label{eq2.6.14}
		\int_{-\infty}^\infty e( \alpha P^\theta \sigma_s \widetilde \beta) K_\pm (\alpha) \text d \alpha =
			\begin{cases}
				1,  & \text{if} \hspace{0.05in} | \widetilde \beta | < \tau P^{-\theta}, \\[10pt]
				0, & \text{otherwise}.				
			\end{cases}
	\end{equation}
	
	\begin{lemma} \label{lemma2.6.1}
		For $ | \widetilde \beta | < \tau P^{-\theta} $ one has 
		\begin{equation*}
			\Psi( \widetilde \beta) - \Psi(0) \ll \tau P^{-\theta}. 
		\end{equation*}
	\end{lemma}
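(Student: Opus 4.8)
\emph{Proof proposal.} The plan is to deduce Lemma~\ref{lemma2.6.1} directly from the Fourier inversion identity (\ref{eq2.6.11}) together with the decay estimate (\ref{eq2.6.3}) for the archimedean integrals $\upsilon_i$. The point is that $\Psi(\widetilde\beta)-\Psi(0)$ is governed entirely by the deviation of the factor $e(-\sigma_s\widetilde\beta\alpha)$ from $1$, and this deviation has size $O(|\widetilde\beta|\,|\alpha|)$; so the whole estimate will rest on the single absolutely convergent integral $\int_{-\infty}^{\infty}|\alpha|\,|\Phi(\alpha P^{-\theta})|\,\mathrm{d}\alpha$.

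First I would record, from (\ref{eq2.6.3}) and the definition (\ref{eq2.6.7}) of $\Phi$, the uniform bound
\[
	|\Phi(\alpha P^{-\theta})| \ll P^{s}(1+|\alpha|)^{-s/\theta} \qquad (\alpha\in\mathbb R),
\]
with implied constant depending on $\lambda_1,\ldots,\lambda_s,s,\theta$ but not on $P$. Since the hypothesis on $s$ (the same inequality $s>2\theta$ already invoked at (\ref{eq2.6.4})) gives $s/\theta>2$, the integral $\int_{-\infty}^{\infty}|\alpha|(1+|\alpha|)^{-s/\theta}\,\mathrm{d}\alpha$ converges to a finite constant depending only on $s$ and $\theta$. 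Next, writing (\ref{eq2.6.11}) at $\widetilde\beta$ and at $0$ and subtracting, I obtain
\[
	\Psi(\widetilde\beta)-\Psi(0) = \left(\frac{\theta}{P}\right)^{s}|\lambda_1\cdots\lambda_s|^{1/\theta}\int_{-\infty}^{\infty}\Phi(\alpha P^{-\theta})\bigl(e(-\sigma_s\widetilde\beta\alpha)-1\bigr)\,\mathrm{d}\alpha .
\]
Applying the elementary inequality $|e(x)-1|\le 2\pi|x|$, valid for all real $x$, and then the displayed bound for $\Phi$, the right-hand side is
\[
	\ll |\widetilde\beta|\,\theta^{s}|\lambda_1\cdots\lambda_s|^{1/\theta}\int_{-\infty}^{\infty}|\alpha|(1+|\alpha|)^{-s/\theta}\,\mathrm{d}\alpha \ll |\widetilde\beta| ,
\]
the powers of $P$ having cancelled and the implied constant being independent of $P$. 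Restricting to $|\widetilde\beta|<\tau P^{-\theta}$ then gives $\Psi(\widetilde\beta)-\Psi(0)\ll\tau P^{-\theta}$, as required.

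I do not expect a genuine obstacle here; the two points deserving a little care are (i) confirming that the prefactor $(\theta/P)^{s}$ exactly cancels the $P^{s}$ coming from (\ref{eq2.6.3}), so that the final constant is truly $P$-free, and (ii) the integrability condition $s/\theta>2$, which is precisely where the lower bound on the number of variables enters. If one wished to economise on variables, a cruder variant --- splitting the $\alpha$-integral at $|\alpha|\asymp|\widetilde\beta|^{-1}$ and using the trivial bound $|e(x)-1|\le 2$ on the tail --- yields the same conclusion using only $s/\theta>1$; but since $s/\theta>2$ is at hand, the one-line estimate above is the cleaner route.
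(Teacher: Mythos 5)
Your argument is exactly the paper's proof: subtract the Fourier inversion formula (\ref{eq2.6.11}) at $\widetilde\beta$ and at $0$, bound $|e(-\sigma_s\widetilde\beta\alpha)-1|\le 2\pi|\widetilde\beta||\alpha|$, and use the decay $|\Phi(\alpha P^{-\theta})|\ll P^s(1+|\alpha|)^{-s/\theta}$ from (\ref{eq2.6.3}) together with $s>2\theta$ to get an absolutely convergent, $P$-free integral. (You even correctly carry the sign of the exponent on $|\lambda_1\cdots\lambda_s|^{1/\theta}$ from (\ref{eq2.6.11}), where the paper's proof has a harmless sign slip.)
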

	
	\begin{proof}
		By (\ref{eq2.6.7}) and (\ref{eq2.6.3}) one has
		\begin{equation*}
			| \Phi( \alpha P^{-\theta})| = \prod_{i=1}^s \left| \upsilon_i (\alpha P^{-\theta}) \right| \ll \frac{P^s}{ (1 + | \alpha|)^{s/ \theta}}.
		\end{equation*}
		Thus by (\ref{eq2.6.11}) one has
			\begin{equation*}
				\begin{split}
					| \Psi( \widetilde \beta) - \Psi(0) | & \leq  \left( \frac{\theta}{P} \right)^s | \lambda_1 \cdots \lambda_s |^{-1/ \theta} \int_{- \infty}^{\infty} | \Phi(\alpha P^{-\theta}) | | e( - \sigma_s \widetilde \beta \alpha) - 1 | \text d \alpha \\[10pt]
					& \ll 	\int_{- \infty}^{\infty} \frac{1}{ (1 + | \alpha|)^{s / \theta}}  | e( - \sigma_s \widetilde \beta \alpha) - 1 | \text d \alpha.				 			
				\end{split}
			\end{equation*}
		Note that for any $ x \in \mathbb R $ one has
			\begin{equation*}
				\left| e (x) - 1 \right| \leq 2 \pi |x|.
			\end{equation*}
		Using this inequality we deduce that
			\begin{equation*}
					\Psi( \widetilde \beta) - \Psi(0) \ll | \widetilde \beta | 	\int_{- \infty}^{\infty} \frac{\left| \alpha\right|}{ (1 + \left| \alpha \right|)^{s / \theta}} 	\text d \alpha \ll \tau P^{-\theta},	
			\end{equation*} 
		since for $ s > 2 \theta $ the integral with respect to $ \alpha$ is absolutely convergent.		
	\end{proof}
	
We now return to (\ref{eq2.6.12}) and substitute $ \Psi (\widetilde \beta) = \Psi(0) + O( \tau P^{-\theta}).$ In view of (\ref{eq2.6.14}) this yields
	\begin{equation} \label{eq2.6.15}
		\mathcal I_{\pm} = 2 \tau \Omega( s, \theta ;  \tuplelambda) P^{s-\theta} + O \left( P^{s- 2\theta} \right),
	\end{equation}
where
	\begin{equation*}
		\Omega( s, \theta ; \tuplelambda)  =  \left(\frac{1}{\theta}\right)^s | \lambda_1 \cdots \lambda_s|^{-1/\theta} C(s, \theta ; \tuplelambda) \hspace{0.05in} > 0,
	\end{equation*}	 
and $C(s, \theta ; \tuplelambda) = \Psi (0)$ with  $\Psi(0)$ given by (\ref{eq2.6.9}) so that
	\begin{equation*}
		\Psi(0) = \int_{\mathcal U(0) } \left( - \sigma_s( \sigma_1 \beta_1 + \cdots + \sigma_{s-1} \beta_{s-1} ) \right)^{1/ \theta -1} ( \beta_1 \cdots \beta_{s-1})^{1 / \theta -1} \text d \beta_1 \cdots \text d \beta_{s-1}.
	\end{equation*}
Note that $ \Psi(0)$ is positive since not all of the $\sigma_i$ are equal. This can be readily seen as follows. 

Let $ | \lambda_0| = \min_i | \lambda_i| .$  Trivially one has
	\begin{equation} \label{eq2.6.16}	
		\Psi (0) \gg \int_0^{ | \lambda_0|} \cdots  \int_0^{ | \lambda_0| } \left( - \sigma_s( \sigma_1 \beta_1 + \cdots + \sigma_{s-1} \beta_{s-1} ) \right)^{1/ \theta -1} ( \beta_1 \cdots \beta_{s-1})^{1 / \theta -1} \text d \tuplebeta.
	\end{equation}
Since the $ \sigma_i$ are not all of the same sign, by linearity there exists a tuple $ \tuplebeta$ such that 
	\begin{equation*}
		- \sigma_s( \sigma_1 \beta_1 + \cdots + \sigma_{s-1} \beta_{s-1} ) > 0,
	\end{equation*}
with $ 0 < \beta_i \leq | \lambda_0| .$ One can now assume that there exists a large positive number $ D $ that depends on $ \beta_i,$ such that $ \frac{1}{D} \leq \beta_i \leq | \lambda_0|.$ Hence, there exists an open neighbourhood of positive measure over which the integrand on the right hand side of (\ref{eq2.6.16}) is positive. Therefore we deduce that $ \Psi (0) \gg 1.$
	
The asymptotic formula (\ref{eq2.6.15}) together with (\ref{eq2.6.6}) yields
	\begin{equation*}
		\int_{\mathfrak M} f_1(\alpha) \cdots f_s(\alpha) K_\pm(\alpha) \text d \alpha =  2 \tau \Omega( s, \theta ; \tuplelambda) P^{s-\theta} + o \left( P^{s- \theta} \right).
	\end{equation*}
The proof of Theorem \ref{thm2.1.1} is now complete by taking into account Lemma \ref{lemma2.4.3}, Lemma \ref{lemma2.5.1} and the expression (\ref{eq2.2.11}).



\section{The inhomogeneous case} \label{section_inhomog_case}

In this section we prove Theorem \ref{thm2.1.2}. Using the kernel functions defined in (\ref{eq2.2.3}) we have that
	\begin{equation*}
		R_{-} (P) \leq  \mathcal N_{s,\theta}^\tau (P ; \tuplelambda, L) \leq R_{+} (P),
	\end{equation*}
whereas now 
	\begin{equation*}
		R_\pm (P) = \int_{-\infty}^\infty f_1( \alpha) \cdots f_s(\alpha) e(-\alpha L) K_\pm(\alpha) \text d \alpha .
	\end{equation*}
	
To study the above integrals we dissect the real line as in the case of Theorem \ref{thm2.1.1}. By the triangle inequality and appealing to Lemma \ref{lemma2.4.3} and Lemma \ref{lemma2.5.1}, we immediately obtain that
	\begin{equation*}	
		\left| \int_{ \mathfrak m \cup \mathfrak t} f_1 (\alpha) \cdots f_s (\alpha)  e(-\alpha L) K_\pm(\alpha) \text d \alpha \right|  \ll \int_{ \mathfrak m \cup \mathfrak t} \left| f_1 (\alpha) \cdots f_s (\alpha) \right|  \text d \alpha = o \left( P^{s - \theta} \right).
	\end{equation*}
Thus, we are left to deal with the contribution arising when integrating over the major arc around zero. The approach given in section \ref{section_major_arc_asympt_formular} applies here as well with minor adjustments, in order to deal with the factor $ e (- \alpha L).$ We briefly now discuss these differences.
	
The singular integral is now given by
	\begin{equation*} 
		\mathcal I_\pm = \int_{ - \infty}^\infty \upsilon_1 (\alpha) \cdots \upsilon_s (\alpha) e (- \alpha L) K_\pm (\alpha) \text d \alpha,
	\end{equation*}
where the functions $ \upsilon_i (\alpha)$ are defined as in (\ref{eq2.6.1}). One may show as in section \ref{section_major_arc_asympt_formular} that
	\begin{equation*}
		\mathcal I_\pm = \int_{ \mathfrak M}  f_1 (\alpha) \cdots f_s (\alpha) e (- \alpha L) K_\pm (\alpha) \text d \alpha + o \left( P^{s-\theta} \right).
	\end{equation*}
Thus, we aim to give an asymptotic formula for the complete singular integral $ I_\pm$ defined above. 

For $ \alpha \in \mathbb R$ we now define 
	\begin{equation} \label{eq2.7.1}
		\Phi (\alpha) = \upsilon_1 (\alpha) \cdots \upsilon_s (\alpha) e (- \alpha L) = \int_{[0,P]^s} e \left( \alpha \left( \lambda_1 \gamma_1^\theta + \cdots + \lambda_s \gamma_s^\theta -L \right) \right) \text d \tuplegamma.
	\end{equation}
Ignoring for the moment the factor $ e(- \alpha L)$ one can study the function $ \Phi$ as before. This analysis leads now to 
	\begin{equation*}
		\Phi (\alpha) = \left( \frac{P}{\theta} \right)^s | \lambda_1 \cdots \lambda_s |^{-1/\theta} \int_{- \infty}^\infty \Psi (\widetilde \beta) e \left(\alpha \left( P^\theta \sigma_s \widetilde \beta - L \right) \right) \text d \widetilde \beta,
	\end{equation*}
where $ \Psi$ is defined as in (\ref{eq2.6.9}). Applying now Fourier's inversion theorem we obtain that
	\begin{equation} \label{eq2.7.2}
		\mathcal I_\pm =  \left( \frac{P}{\theta} \right)^s | \lambda_1 \cdots \lambda_s |^{-1/\theta} \int_{- \infty}^\infty \Psi (\widetilde \beta) \left(\int_{- \infty}^\infty   e \left(\alpha \left( P^\theta \sigma_s \widetilde \beta - L \right) \right)  K_\pm (\alpha) \text d \alpha \right) \text d \widetilde \beta.
	\end{equation}
	
One may assume that $ \widetilde \beta$ satisfies 
	\begin{equation} \label{eq2.7.3}
		\int_{-\infty}^\infty  e \left( \alpha \left(P^\theta \sigma_s \widetilde \beta - L \right) \right) K_\pm (\alpha) \text d \alpha =
			\begin{cases}
				1,  & \text{if} \hspace{0.05in} | \widetilde \beta - L P^{-\theta}| < \tau P^{-\theta}, \\[10pt]
				0, & \text{otherwise}.				
			\end{cases}
	\end{equation}
Note that for the measure of the set of points $ \widetilde \beta$ which do not satisfy the above relation  one has $ O \left( \tau | L^{-1} | P^{-\theta} \left( \log P \right)^{-1} \right).$ The contribution coming from such $\widetilde \beta$ is $o \left(P^{s-\theta}\right)$ and hence one may ignore this set. Under the assumption that $ | \widetilde \beta - L P^{-\theta}| < \tau P^{-\theta} $ one can show that 
	\begin{equation*} 
		\Psi (\widetilde \beta ) - \Psi (1) \ll  ( \tau + |L^{-1}|) P^{-\theta}.
	\end{equation*}
Indeed, since the factor $ e(-\alpha L)$ in (\ref{eq2.7.1}) does not affect things, one can repeat the argument given in the proof of Lemma \ref{lemma2.6.1} to deduce that
	\begin{equation*}
		\Psi (\widetilde \beta ) - \Psi (0) \ll | \widetilde \beta | \ll | \widetilde \beta - L P^{-\theta}| + |L P^{-\theta}| \ll \left( \tau + |L| \right) P^{-\theta}.
	\end{equation*}
One can now substitute $ \Psi (\widetilde \beta) = \Psi (0) + \left( \tau + |L| \right) P^{-\theta}$ into (\ref{eq2.7.2}). In view of (\ref{eq2.7.3}) this yields
	\begin{equation*}
		\mathcal I_\pm =  \left( \frac{P}{\theta} \right)^s | \lambda_1 \cdots \lambda_s |^{-1/\theta} \int_{ - \tau P^{-\theta} + L P^{-\theta}}^{ \tau P^{-\theta} + LP^{-\theta}}  \left( \Psi (0) + \left( \tau + |L| \right) P^{-\theta} \right) \text d \widetilde \beta.
	\end{equation*}
Therefore, we deduce that
	\begin{equation*}
		\mathcal I_\pm = 2 \tau \Omega(s,\theta ; \tuplelambda)  P^{s -\theta} + O \left( \left( \tau + |L| \right)  P^{s-2 \theta} \right),
	\end{equation*}	
where 
	\begin{equation*}
		\Omega (s, \theta ; \tuplelambda) = \left( \frac{1}{\theta} \right)^s | \lambda_1 \cdots \lambda_s |^{- 1 / \theta} \Psi (0) > 0,
	\end{equation*}	
with $ \Psi (0) $ given by (\ref{eq2.6.9}). The proof of Theorem \ref{thm2.1.2} is now complete.



\section{The definite case} \label{section_definite_case}

In this section we prove Theorem \ref{thm2.1.3}. Here we deal with positive definite generalised polynomials. In this section we put
	\begin{equation*}
		\mathcal F (\tuplex) =  \lambda_1 x_1^\theta + \cdots + \lambda_s x_s^\theta - \nu ,
	\end{equation*}
and recall that we  write $ \rho_s (\tau, \nu)$  to denote the number of solutions $ \tuplex \in \mathbb N^s$  possessed by the inequality $ | \mathcal F (\tuplex) | < \tau$ for a fixed real number $ \tau \in (0,1].$ Our approach follows that presented in \cite[Theorem 1.10]{brued_kawada_wooley_8}, where the authors deal with the problem of counting solutions to inequalities for positive definite polynomials.

For any solution $ \tuplex$ counted by $ \rho_s (\tau, \nu)$ one has $ 0 < x_i \leq P \hspace{0.05in} (1 \leq  i\leq s),$ where
	\begin{equation} \label{eq2.8.1}
		P = 2 \left( \lambda_1^{- 1 / \theta} + \cdots + \lambda_s^{ - 1 / \theta} +1 \right) \nu^{ 1/ \theta}.
	\end{equation}
So one can write
	\begin{equation*}
		\rho_s (\tau, \nu) = \sum_{ \substack{ \tuplex \in [1,P]^s \\ | \mathcal F(\tuplex) | < \tau }  }  1.
	\end{equation*}	

Recall the kernel function $ K(\alpha) = \sinc^2(\alpha).$ For any real $ \eta > 0$ we define the function
	\begin{equation*} \label{pos_def_w_def}
		w_\eta (x) = \eta K(\eta x)
	\end{equation*}
that was used in \cite{davenport-heilbr-ineq}. It satisfies
	\begin{equation} \label{eq2.8.2}
		w_\eta (x) \ll \min \{ 1, | x |^{-2}\} \hspace{0.1in} \text{and} \hspace{0.1in} 0 \leq w_\eta(x) \leq \eta.
	\end{equation}
The Fourier transform of this function is given by
	\begin{equation} \label{eq2.8.3}
		\widehat w_\eta (x) = \int_{ - \infty}^{\infty} w_\eta (u) e (- xu) \text d u = \max \left\{ 0, 1 - \frac{ | x |}{\eta} \right\}.
	\end{equation}

Now we define the weighted integral
	\begin{equation*}
		\rho_s^\star (\tau , \nu) = \int_{ - \infty}^{\infty} f_1 (\alpha) \cdots f_s (\alpha) w_\tau (\alpha) \text d \alpha.
	\end{equation*}
In the light of the discussion in \cite[\S 2.1, \S 2.2]{brued_kawada_wooley_8} and appealing to  \cite[Lemma 2.1]{brued_kawada_wooley_8}, whenever $ 0 < \Delta < \frac{ \tau} {2}$ one has 
	\begin{equation} \label{eq2.8.4}
		\rho_s (\tau ,\nu) = \left( 1 + \frac{\tau}{ \Delta} \right) \rho_s^\star \left( \tau + \Delta, \nu \right) - \frac{ \tau}{ \Delta} \rho_s^\star (\tau, \nu) + O \left( \rho_s^\star \left(\Delta, \nu + \tau \right) + \rho_s^\star \left(\Delta, \nu - \tau \right) \right).
	\end{equation}
It is apparent by (\ref{eq2.8.4}) that it is enough to establish an asymptotic formula for the weighted integral $ \rho_s^\star (\tau, \nu).$ To do so, we dissect the real line into three disjoint sets as in section \ref{section_set_up_overview}. Note that now we take $P$ as defined in (\ref{eq2.8.1}). 

For estimating the contribution arising from the sets of minor and trivial arcs one can invoke Lemma \ref{lemma2.4.3} and Lemma \ref{lemma2.5.1}. Together with the fact that by (\ref{eq2.8.2}) one has $  w_\tau (\alpha)  \ll 1 $ for any $\alpha,$ we  deduce that
	\begin{equation} \label{eq2.8.5}
		\int_{ \mathfrak m \cup \mathfrak t} \left| f_1 (\alpha) \cdots f_s (\alpha) e(-\alpha \nu) w_\tau (\alpha) \right| \text d \alpha = o \left( P^{s - \theta} \right).
	\end{equation} 	
So, one is left to deal with the contribution arising when integrating over the major arc. We write
	\begin{equation*}
		I \left( \mathfrak M \right) = \int_{\mathfrak M} f_1 (\alpha) \cdots f_s (\alpha) e(- \alpha \nu) w_\tau (\alpha) \text d \alpha,
	\end{equation*}
and the singular integral is given by
	\begin{equation*}
		\mathcal I_\infty = \int_{- \infty}^{\infty} \upsilon_1 (\alpha) \cdots \upsilon_s (\alpha) e(- \alpha \nu) w_\tau (\alpha ) \text d \alpha,
	\end{equation*}
where the functions $\upsilon_i (\alpha)$ are defined as in (\ref{eq2.6.1}). Below we obtain an asymptotic formula for the integral $ I \left( \mathfrak M \right).$ The argument is analogous to the one given in \cite[Lemma 2.4]{brued_kawada_wooley_8}.

	\begin{lemma} \label{lemma2.8.1}
		Provided that $ s  > 2 \theta$ one has
			\begin{equation*}
				I \left( \mathfrak M \right) =  \frac{ \Gamma \left( 1 + \frac{1}{\theta} \right)^s  }{ \Gamma \left( \frac{s}{\theta}  \right) } \left( \lambda_1 \cdots \lambda_s \right)^{- 1 / \theta} \tau \nu^{ s/ \theta -1} + O \left( \tau \left(  P^{s - \theta - 1 + \delta_0} + P^{s - \theta - \delta_0 (s/ \theta -1) } \right) \right).
			\end{equation*}
		The implicit constant in the error term is independent of $\nu.$
	\end{lemma}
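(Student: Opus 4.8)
The strategy is to compute the singular integral $\mathcal I_\infty$ exactly, show it has the claimed main term, and then bound the error $I(\mathfrak M) - \mathcal I_\infty$ using the tail estimates already available from the major arc analysis of Section~\ref{section_major_arc_asympt_formular}. First I would record the approximation $f_i(\alpha) - \upsilon_i(\alpha) \ll P^{\delta_0}$ valid on $\mathfrak M$, together with the telescoping identity and the trivial bounds $|f_i|,|\upsilon_i| \leq P$, exactly as in the proof of Theorem~\ref{thm2.1.1}; since $w_\tau(\alpha) \ll 1$ by \eqref{eq2.8.2}, integrating the difference $f_1\cdots f_s - \upsilon_1\cdots\upsilon_s$ over $\mathfrak M$ contributes $O(\tau P^{s-\theta-1+\delta_0})$ (the extra $\tau$ coming from $w_\tau \leq \tau$ on a set where we exploit the size of the kernel, or simply tracked through $\widehat{w}_\tau$). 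Next, using the bound \eqref{eq2.6.3} for $\upsilon_i$ and $w_\tau(\alpha) \ll 1$, I would show $\int_{\mathbb R \setminus \mathfrak M} |\upsilon_1 \cdots \upsilon_s| \, w_\tau(\alpha)\,\mathrm d\alpha \ll \tau P^{s-\theta-\delta_0(s/\theta-1)}$ — here one splits the integral dyadically and uses $s > 2\theta$ for convergence — so that $I(\mathfrak M) = \mathcal I_\infty + O(\tau(P^{s-\theta-1+\delta_0} + P^{s-\theta-\delta_0(s/\theta-1)}))$.

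It then remains to evaluate $\mathcal I_\infty = \int_{-\infty}^\infty \upsilon_1(\alpha)\cdots\upsilon_s(\alpha) e(-\alpha\nu) w_\tau(\alpha)\,\mathrm d\alpha$ in closed form. The key computation is to interchange the order of integration: writing $\upsilon_1(\alpha)\cdots\upsilon_s(\alpha)e(-\alpha\nu) = \int_{[0,P]^s} e(\alpha(\lambda_1\gamma_1^\theta + \cdots + \lambda_s\gamma_s^\theta - \nu))\,\mathrm d\boldsymbol\gamma$ and pulling the $\alpha$-integration inside, one uses the Fourier-transform identity \eqref{eq2.8.3}, namely $\int_{-\infty}^\infty e(-\alpha\xi) w_\tau(\alpha)\,\mathrm d\alpha = \widehat{w}_\tau(\xi) = \max\{0, 1 - |\xi|/\tau\}$, with $\xi = \nu - (\lambda_1\gamma_1^\theta + \cdots + \lambda_s\gamma_s^\theta)$. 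This converts $\mathcal I_\infty$ into a weighted volume integral over the region where $|\lambda_1\gamma_1^\theta + \cdots + \lambda_s\gamma_s^\theta - \nu| < \tau$. Since all $\lambda_i > 0$ in the definite case and $\tau \leq 1$ while $\nu \to \infty$, this is essentially the surface measure of the level set $\{\lambda_1\gamma_1^\theta + \cdots + \lambda_s\gamma_s^\theta = \nu\}$ times $\tau$, plus a negligible error. Substituting $\gamma_i = (\beta_i/\lambda_i)^{1/\theta}$ (so $\mathrm d\gamma_i = \theta^{-1}\lambda_i^{-1/\theta}\beta_i^{1/\theta-1}\,\mathrm d\beta_i$) reduces the surface integral to the classical Dirichlet-type integral $\int (\beta_1\cdots\beta_s)^{1/\theta-1}\,\mathrm d\sigma$ over $\{\beta_1 + \cdots + \beta_s = \nu\}$, which evaluates via the Dirichlet integral formula to $\Gamma(1/\theta)^s/\Gamma(s/\theta) \cdot \nu^{s/\theta - 1}$ up to the Jacobian factors; absorbing the $\theta^{-s}\prod\lambda_i^{-1/\theta}$ and rewriting $\theta^{-1}\Gamma(1/\theta) = \Gamma(1 + 1/\theta)$ gives precisely the stated main term $\Gamma(1+1/\theta)^s \Gamma(s/\theta)^{-1}(\lambda_1\cdots\lambda_s)^{-1/\theta}\tau\nu^{s/\theta-1}$.

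The main obstacle is making the second paragraph's "surface measure" heuristic rigorous with an acceptable error term: one must show that replacing the thin shell $\{|\,\sum\lambda_i\gamma_i^\theta - \nu| < \tau\}$, weighted by the triangular profile $\widehat{w}_\tau$, by $2\tau$ times the surface measure introduces only an error of size $O(\tau \cdot \nu^{s/\theta-1-1/\theta})$ or similar, which is then subsumed into $O(\tau P^{s-2\theta})$-type terms once one recalls $P \asymp \nu^{1/\theta}$ from \eqref{eq2.8.1}. I would handle this by the co-area formula or by a direct change of variables isolating $\gamma_s$ in terms of the remaining coordinates (exactly the substitution $\widetilde\beta = \beta_s + \cdots$ used around \eqref{eq2.6.10}), reducing to a one-dimensional integral of a Lipschitz density against $\widehat{w}_\tau$, where the error is controlled by $\tau^2$ times the Lipschitz constant — and the Lipschitz constant of the relevant density in the rescaled variable is $O(P^{-\theta})$, giving the stated error after multiplying by the $P^s$ normalisation. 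This is entirely parallel to Lemma~\ref{lemma2.6.1} and the computation following it, so no genuinely new difficulty arises; the bookkeeping of constants and the verification that $s > 2\theta$ suffices for all convergence claims is the only thing requiring care.
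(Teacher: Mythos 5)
Your plan matches the paper's proof in all essential respects: the same two preliminary estimates ($w_\tau\leq\tau$ plus the telescoping bound on $\mathfrak M$, and the decay bound \eqref{eq2.6.3} on $\mathbb R\setminus\mathfrak M$) to reduce to $\mathcal I_\infty$, followed by the same interchange of integration, the Fourier identity \eqref{eq2.8.3}, a change of variables to strip out the $\lambda_i$, a reduction via Fubini/Liouville's Dirichlet-type integral to a one-dimensional integral of $t^{s/\theta-1}$ against $\widehat w_\tau(t-\nu)$, and a Taylor expansion there. The only deviation is cosmetic — you take $\gamma_i=(\beta_i/\lambda_i)^{1/\theta}$ so the phase linearises to $\sum\beta_i$ and the Jacobian supplies $(\beta_1\cdots\beta_s)^{1/\theta-1}$, whereas the paper takes $\gamma_i=\beta_i\lambda_i^{-1/\theta}$ and works with level sets of $\sum\beta_i^\theta$; both routes yield identically $\theta^{-s}\Gamma(1/\theta)^s/\Gamma(s/\theta)=\Gamma(1+1/\theta)^s/\Gamma(s/\theta)$ and the same error budget.
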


	\begin{proof}
		Using the fact that $ 0 \leq w_\tau (x) \leq \tau$ one has as in (\ref{eq2.6.2}) that
			\begin{equation*}	
					I \left( \mathfrak M \right) - \int_{\mathfrak M}  \upsilon_1 (\alpha) \cdots \upsilon_s (\alpha) e(- \alpha \nu) w_\tau (\alpha) \text d \alpha  \ll \tau P^{s- \theta - 1+ 2 \delta_0}. 
			\end{equation*}
		So as in section \ref{section_major_arc_asympt_formular} we may infer that
			\begin{equation*}
				\begin{split}	
					\int_{ \mathbb R \setminus  \mathfrak M } \upsilon_1 (\alpha) \cdots \upsilon_s (\alpha) e(- \alpha \nu) w_\tau (\alpha) \text d \alpha & \ll \tau \int_{ | \alpha| > P^{- \theta + \delta_0} } | \alpha|^{ - s/ \theta} \text d \alpha \\[10pt]
					& \ll \tau P^{ s - \theta - \delta_0 ( s / \theta -1 )}.
				\end{split}
			\end{equation*}
		Thus, the above two estimates yield	
			\begin{equation} \label{eq2.8.6}
				I \left( \mathfrak M \right) = \mathcal I_\infty + O \left( \tau \left(  P^{s - \theta - 1 + \delta_0} + P^{s - \theta - \delta_0 (s/ \theta -1) } \right) \right) .
			\end{equation}
			
		By (\ref{eq2.6.1}) we may write 
			\begin{equation*}
				\mathcal I_\infty = \int_{ -\infty}^{\infty} \left( \int_{[0,P]^s} e (\alpha (\lambda_1 \gamma_1^\theta+ \cdots + \lambda_s \gamma_s^\theta)) \right) e(- \alpha \nu) w_\tau (\alpha) \text d \alpha.
			\end{equation*}	
		Since the integral is absolutely convergent we can interchange the order of integration in the right hand side of the above formula. Invoking  (\ref{eq2.8.3}) one has
			\begin{equation*}
				\mathcal I_\infty =  \int_{ [0, P]^s } \widehat w_\tau \left( \lambda_1 \gamma_1^\theta + \cdots +  \lambda_s \gamma_s^\theta - \nu \right) \text d \tuplegamma.
			\end{equation*}
		Since $ \lambda_i > 0$ one may use (\ref{eq2.8.3}) to extend the order of integration to $[0,\infty)^s.$ After a change of variables with $ \gamma_i = \beta_i  \lambda_i^{-1 / \theta} \hspace{0.05in} ( 1\leq  i\leq s)$ the above expression takes the shape 
			\begin{equation} \label{eq2.8.7}
				\mathcal I_\infty =  (\lambda_1 \cdots \lambda_s)^{-1 / \theta} \int_{ [0, \infty)^s } \widehat w_\tau \left( \beta_1^\theta + \cdots +  \beta_s^\theta - \nu \right) \text d \tuplebeta.
			\end{equation}
	
		Consider the level sets of the function $ \beta_1^\theta + \cdots + \beta_s^\theta.$ For $ t \in \mathbb R$ the equation  $ t =  \beta_1^\theta + \cdots +  \beta_s^\theta$ defines a surface in $ \mathbb R^s$ of codimension $1.$ We write $ \mathcal S$ to denote the surface obtained by the intersection with the domain $ \{(\beta_1, \ldots, \beta_s) : \beta_i > 0 \hspace{0.05in} (1 \leq i \leq s) \} \subset \mathbb R^s.$ The area of $ \mathcal S $ is equal to 
			\begin{equation*} 
				t^{s / \theta - 1} \frac{ \Gamma \left( 1 + \frac{1}{\theta} \right)^s  }{ \Gamma \left( \frac{s}{\theta}  \right) }. 
			\end{equation*}
		Using the transformation formula we may integrate over $\mathcal S$ and applying Fubini's theorem equation (\ref{eq2.8.7}) takes the shape
			\begin{equation} \label{eq2.8.8}
				\mathcal I_\infty =  (\lambda_1 \cdots \lambda_s)^{-1 / \theta} \frac{ \Gamma \left( 1 + \frac{1}{\theta} \right)^s  }{ \Gamma \left( \frac{s}{\theta}  \right) } \int_0^\infty t^{ s/ \theta -1} \widehat w_\tau ( t - \nu) \text d t.
			\end{equation}
			
		By (\ref{eq2.8.3}) and putting $ t - \nu = u$ one has
			\begin{equation*}
				 \int_0^\infty t^{ s/ \theta -1} \widehat w_\tau ( t - \nu) \text d t = \int_{ - \tau }^{\tau} \left( 1 - \frac{|u |}{\tau} \right) (\nu + u)^{s / \theta-1} \text d u.
			\end{equation*}
		For large enough $ \nu$ one has $ | u/ \nu | < 1 ,$ so the binomial expansion yields
			\begin{equation*}
				(\nu + u)^{ s / \theta -1} = \nu^{s/ \theta - 1} \left( 1 + \frac{u}{\nu} \right)^{s / \theta-1} =  \nu^{s/ \theta - 1} + O \left( u \nu^{ s / \theta -2} \right).
			\end{equation*}
		Hence one has
			\begin{equation*}
				\int_{ - \tau }^{\tau} \left( 1 - \frac{|u |}{\tau} \right) (\nu + u)^{s / \theta-1} \text d u = \tau \nu^{s/ \theta -1} +  O \left( \tau^2 \nu^{ s / \theta -2} \right).
			\end{equation*}
	Returning to (\ref{eq2.8.8}) we deduce that
			\begin{equation*}
				\mathcal I_\infty = (\lambda_1 \cdots \lambda_s)^{-1 / \theta} \frac{ \Gamma \left( 1 + \frac{1}{\theta} \right)^s  }{ \Gamma \left( \frac{s}{\theta}  \right) }   \tau \nu^{s/ \theta -1} +  O \left( \tau^2 \nu^{ s / \theta -2} \right),
			\end{equation*}
		which when combined with (\ref{eq2.8.6}) and observing that $ \tau P^{s - \theta - 1 - \delta_0} \gg \tau^2 \nu^{ s / \theta - 2}$ completes the proof of the lemma.
	\end{proof}

We may now complete the proof of Theorem \ref{thm2.1.3}.
	
	\begin{proof}[Proof of Theorem \ref{thm2.1.3}]
		Putting together (\ref{eq2.8.5}) and the conclusion of Lemma \ref{lemma2.8.1} we deduce that
			\begin{equation*}
				\rho_s^\star (\tau, \nu) = \frac{ \Gamma \left( 1 + \frac{1}{\theta} \right)^s  }{ \Gamma \left( \frac{s}{\theta}  \right) } (\lambda_1 \cdots \lambda_s)^{-1 / \theta} \tau \nu^{s / \theta -1} + o \left( \nu^{ s / \theta -1} \right).
			\end{equation*}
		One can now substitute the above formula into (\ref{eq2.8.4}). This yields
			\begin{equation} \label{eq2.8.9}
				\rho_s (\tau , \nu) = (\lambda_1 \cdots \lambda_s)^{-1 / \theta} \frac{ \Gamma \left( 1 + \frac{1}{\theta} \right)^s  }{ \Gamma \left( \frac{s}{\theta}  \right) }  \left( 2\tau \nu^{s/\theta -1} + \Delta \nu^{s/\theta -1} + W \right)  + o \left( \nu^{s/\theta -1} \right), 
			\end{equation}
		where 
			\begin{equation*}
				W = O \left( \Delta (\nu + \tau)^{s/\theta -1} + \Delta (\nu - \tau)^{s/\theta -1} + o \left( (\nu + \tau)^{s/ \theta -1} \right) + o\left( (\nu - \tau)^{s/ \theta -1} \right) \right).
			\end{equation*}
		Here $ \Delta $ is at our disposal, as long as it satisfies $ 0 < \Delta < \frac{\tau}{2}.$ One may choose $ \Delta = \frac{\tau}{3} P^{-1/100}.$ Then one has
			\begin{equation*} 
				\Delta \nu^{s/\theta -1} +   W =  o \left( \nu^{s/\theta -1} \right).
			\end{equation*}
		Hence the asymptotic formula (\ref{eq2.8.9}) delivers the desired conclusion which completes the proof. 
	\end{proof}


\section{A discrete $L^2$-restriction estimate} \label{section_strichartz_est}

This section is devoted to the demonstration of Theorem \ref{thm2.1.5}. Before we present our proof let us motivate the route we take. To make this clearer assume for the moment that $ \theta = d \in \mathbb N.$ Then an application of the Cauchy-Schwarz inequality reveals that
	\begin{equation*}
		\begin{split}
			\int_0^1 \left| \sum_{1 \leq x \leq P} \mathfrak a_x e (\alpha x^d) \right|^{2s} \text d \alpha & = \int_0^1 \left| \sum_{ \ell \in \mathbb Z} \sum_{ \tuplex \in \mathcal B_d (\ell) } \mathfrak a_{x_1} \cdots \mathfrak a_{x_s} e(\alpha \ell) \right|^2 \text d \alpha \\[10pt]
			& \leq \sum_{ \ell \in \mathbb Z} ( \# \left( \mathcal B_d (\ell) \right) ) \sum_{ \tuplex \in \mathcal B_d (\ell) } | \mathfrak a_{x_1} \cdots \mathfrak a_{x_s}|^2,		
		\end{split}
	\end{equation*}
where $ \mathcal B_d (\ell) = \{ 1  \leq \tuplex \leq P : x_1^d + \cdots + x_s^d = \ell \},$ and we write $ \#(\mathcal B_d (\ell))$ to denote its cardinality. By orthogonality one has
	\begin{equation*}
		\#(\mathcal B_d (\ell)) = \int_0^1 \left| \sum_{ 1 \leq x \leq P} e(\alpha x^d)\right|^s e (- \alpha \ell) \text d \alpha.
	\end{equation*}
Hence the problem boils down to bounding the quantity $ \max_{\ell} \#( \mathcal B_d (\ell)).$ Using classical methods together with the circle method, one can show that $\#( \mathcal B_d (\ell)) \ll P^{s-d}$ for sufficiently large $s.$ For example, using the latest method of Wooley \cite{wooley_NEC}  on Vinogradov's mean value theorem, one can take $ s \geq s_0$ with $s_0$ as in (\ref{eq2.1.7}).

When dealing with $ \theta \notin \mathbb N$ one has to modify slightly the argument sketched above. As an analogue of $ \mathcal B_d (\ell)$ we define the set
	\begin{equation*}
		\mathcal B_\theta (\ell) = \left\{ 1 \leq \tuplex \leq P : | x_1^\theta + \cdots + x_s^\theta - \ell | < 1/2 \right\}.
	\end{equation*}
The partition
	\begin{equation*}
		\bigcup_{\ell \in \mathbb Z} \mathcal B_\theta (\ell) = \{ (x_1, \ldots, x_s) :1 \leq x_i \leq P \}
	\end{equation*}	
no longer makes sense for a fractional exponent $ \theta.$ In this situation we instead look at tuples $ \tuplex$ such that $ x_1^\theta + \cdots + x_s^\theta $ is close to an integer value $ \ell.$ This observation makes apparent the link between our aim and the problem of representing integers by a generalized polynomial as described in (\ref{eq2.1.6}). 

Note that with the notation of section \ref{section_definite_case} one has $K(\alpha) = w_1 (\alpha),$ and so by (\ref{eq2.8.3}) one has
	\begin{equation} \label{eq2.9.1}
		\int_{-\infty}^{\infty} e(\alpha \xi) K(\alpha) \text d \alpha = \max \{ 0, 1 - | \xi| \},
	\end{equation}
for all $ \xi \in \mathbb R.$ We may now embark to the proof.

\begin{proof}[Proof of Theorem \ref{thm2.1.5}]
	Recall that we assume  $s \geq 2\left( \lfloor 2\theta  \rfloor+1 \right) \left( \lfloor 2\theta  \rfloor + 2 \right) + 2.$ Expanding one has that
		\begin{equation} \label{eq2.9.2}
			\int_{-\infty}^\infty \left| f_{\mathfrak a}(\alpha) \right|^{2s} K(\alpha) \text d \alpha  = \int_{-\infty}^\infty \left| \sum_{ 1 \leq \tuplex \leq P} \mathfrak a_{x_1} \cdots \mathfrak a_{x_s} e \left( \alpha(x_1^\theta + \cdots + x_s^\theta) \right) \right|^2 K(\alpha) \text d \alpha.			
		\end{equation}
By the definition of the nearest integer function $ \| \cdot \|_{\mathbb R / \mathbb Z} : \mathbb R \to \left[0, 1/2\right]$ we can decompose the summation over $ 1 \leq \tuplex \leq P $ by counting integer solutions of the inhomogeneous inequality 
	\begin{equation*} 
		| x_1^\theta + \cdots + x_s^\theta - \ell_1 | \leq 1/2,
	\end{equation*} 
inside the box $ \left[ 1, P \right]^s,$ where $ \ell_1 $ runs over $ \mathbb Z.$ With this observation and expanding the square, one has that the right hand side of (\ref{eq2.9.2}) is equal to 
	\begin{equation}  \label{eq2.9.3}
		\sum_{ \ell_1, \ell_2 \in \mathbb Z} \hspace{0.05in} \sum_{ \tuplex \in \mathcal B_\theta (\ell_1) } \hspace{0.05in} \sum_{ \tupley \in \mathcal B_\theta (\ell_2) }  \mathfrak a_{x_1} \cdots \mathfrak a_{x_s} \overline{\mathfrak a_{y_1}} \cdots \overline{\mathfrak a_{y_s}} \int_{-\infty}^\infty e \left( \alpha \left( \sigma_{s,\theta}(\tuplex,  \tupley ) \right) \right)  K (\alpha) \text d \alpha,		
	\end{equation}	
where we write $ \sigma_{s,\theta} (\tuplex, \tupley) = x_1^\theta + \cdots + x_s^\theta - y_1^\theta - \cdots - y_s^\theta .$ Let us note that it is at this step where we essentially "double" the number of variables.
	
Invoking (\ref{eq2.9.1}) we see that 
	\begin{equation*}
		0 	< \int_{-\infty}^\infty e \left( \alpha( \sigma_{s,\theta}(\tuplex, \tupley) \right)  K(\alpha) \text d \alpha \leq 1	
	\end{equation*}
if and only if $ | \sigma_{s,\theta} (\tuplex, \tupley) | < 1.$ Indeed, if $ | \sigma_{s,\theta} (\tuplex, \tupley)| \geq 1$ then $ \max \{ 0, 1 - | \sigma_{s,\theta} (\tuplex, \tupley)|  \} =0$ and so by (\ref{eq2.9.1})  we see that the expression in (\ref{eq2.9.3}) is equal to zero. Hence, in this case there is nothing to prove since the estimate claimed in the statement of Theorem \ref{thm2.1.5} trivially holds. Thus, we may assume that the tuples $ \tuplex, \tupley $ satisfy the inequality $  | \sigma_{s,\theta}(\tuplex, \tupley) | < 1.$ Under this assumption one has
	\begin{equation} \label{eq2.9.4}
		\begin{split}	
			\sum_{ \ell_1, \ell_2 \in \mathbb Z} \hspace{0.05in} \sum_{ \tuplex \in \mathcal B_\theta (\ell_1) } \hspace{0.05in} \sum_{ \tupley \in \mathcal B_\theta (\ell_2) }  & \mathfrak a_{x_1} \cdots \mathfrak a_{x_s} \overline{\mathfrak a_{y_1}} \cdots \overline{\mathfrak a_{y_s}}  \int_{-\infty}^\infty  e \left( \alpha \left( \sigma_{s,\theta}(\tuplex, \tupley ) \right) \right)  K (\alpha) \text d \alpha \\[10pt]
			& \ll  \sum_{  \ell_1, \ell_2 \in \mathbb Z } \hspace{0.05in} \mathop{\sum_{\tuplex \in \mathcal B_\theta(\ell_1)} \hspace{0.05in} \sum_{ \tupley \in \mathcal B_\theta(\ell_2)}}_{| \sigma_{s,\theta}(\tuplex, \tupley) | < 1} \mathfrak a_{x_1} \cdots \mathfrak a_{x_s} \overline{ \mathfrak a_{y_1}} \cdots \overline{ \mathfrak  a_{y_s}}.
		\end{split}
	\end{equation}
Let $ (\tuplex,  \tupley) \in \mathcal B_\theta(\ell_1) \times  \mathcal B_\theta(\ell_2)$ and suppose that $ | \sigma_{s,\theta}( \tuplex, \tupley)| <1.$ Then by the triangle inequality one has
	\begin{equation*}
		| \ell_1 - \ell_2 | \leq \left| \ell_1 - (x_1^\theta+ \cdots + x_s^\theta) \right| + \left| \sigma_{s,\theta}( \tuplex, \tupley) \right| +  \left| \ell_2 - (y_1^\theta+ \cdots + y_s^\theta) \right| < 2.
	\end{equation*} 
Therefore, it turns out that 
	\begin{equation} \label{eq2.9.5}
		\begin{split}	
			\sum_{  \ell_1, \ell_2 \in \mathbb Z } \hspace{0.05in} \mathop{\sum_{\tuplex \in \mathcal B_\theta(\ell_1)} \hspace{0.05in} \sum_{ \tupley \in \mathcal B_\theta(\ell_2)}}_{| \sigma_{s,\theta}(\tuplex, \tupley) | < 1} & \mathfrak a_{x_1} \cdots \mathfrak a_{x_s} \overline{ \mathfrak a_{y_1}} \cdots \overline{ \mathfrak  a_{y_s}} \\[10pt]			
			&\leq \sum_{  \ell_1, \ell_2 \in \mathbb Z } \hspace{0.05in} \mathop{\sum_{\tuplex \in \mathcal B_\theta(\ell_1)} \hspace{0.05in} \sum_{ \tupley \in \mathcal B_\theta(\ell_2)}}_{| \ell_1 - \ell_2 | < 2}  \mathfrak a_{x_1} \cdots \mathfrak a_{x_s} \overline{ \mathfrak a_{y_1}} \cdots \overline{ \mathfrak a_{y_s}}.
		\end{split}
	\end{equation}
	
Since $ \ell_1 - \ell_2 \in \{0, \pm 1\}$ we see that if we fix one of the $ \ell_1, \ell_2,$ then the other one has exactly $3$ choices. So by symmetry one has that the expression on the right hand side of (\ref{eq2.9.5}) is bounded above by
	\begin{equation*}
		6 \sum_{ \ell_3 \in \mathbb Z } \left| \sum_{ \tuplex\in \mathcal B_\theta^\prime (\ell_3)}  \mathfrak a_{x_1} \cdots \mathfrak a_{x_s}    \right|^2,
	\end{equation*}
where  for $ \ell_3 \in \mathbb Z$ we put 
	\begin{equation*}
		\mathcal B_\theta^\prime (\ell_3)  = \{ 1 \leq \tuplex  \leq P: | x_1^\theta + \cdots + x_s^\theta - \ell_3 | < 1 \}.
	\end{equation*}
An application of the Cauchy-Schwarz inequality reveals that for $s \geq 2\left( \lfloor 2\theta  \rfloor+1 \right) \left( \lfloor 2\theta  \rfloor + 2 \right) + 2$ one has
	\begin{equation*}
		\begin{split}	
			6 \sum_{ \ell_3 \in \mathbb Z } \left| \sum_{ \tuplex \in \mathcal B_\theta^\prime (\ell_3)}  \mathfrak a_{x_1} \cdots \mathfrak a_{x_s}  \right|^2 & \leq 
			6 \sum_{ \ell_3 \in \mathbb Z} \left( \sum_{ \tuplex \in \mathcal B_\theta^\prime ( \ell_3)} 1 \right) \left(  \sum_{ \tuplex \in \mathcal B_\theta^\prime ( \ell_3)} | \mathfrak a_{x_1} \cdots \mathfrak a_{x_s}|^2 \right)	\\[10pt]
			& \ll \max_{ \ell_3 \in \mathbb Z} \left( \#  \mathcal B_\theta^\prime ( \ell_3) \right) \sum_{ \ell_3 \in \mathbb Z} \sum_{ \tuplex \in \mathcal B_\theta^\prime ( \ell_3)} | \mathfrak a_{x_1} \cdots \mathfrak a_{x_s}|^2 \\[10pt]
			& \ll P^{s-\theta} \left( \sum_{1 \leq  x \leq P} | \mathfrak a_x |^2 \right)^s,							
		\end{split}
	\end{equation*}
where in the last step we used Theorem \ref{thm2.1.3}. Putting together (\ref{eq2.9.4}), (\ref{eq2.9.5}) and invoking (\ref{eq2.9.2}) we are done.
\end{proof}

\newpage 

{\textbf{Acknowledgements.}} This paper is based on work appearing in the author's Ph.D. thesis at the University of Bristol and was supported by a studentship sponsored by a European Research Council Advanced Grant under the European Union's Horizon 2020 research and innovation programme via grant agreement No. 695223. The author would like to thank Prof. Trevor D. Wooley for suggesting this line of research and for the guidance, and Dr. Kevin Hughes for useful discussions and encouragement. The author wishes also to thank Prof. Angel V. Kumchev for detecting an oversight in the application of Lemma \ref{lemma2.3.2} and suggesting the use of a double sum in (\ref{eq2.3.16}). Finally, the author wishes to thank the anonymous referee for reading this manuscript.


\begin{thebibliography}{99}
	
	\bibitem[AZ84]{arkhipov_zitkov_warings_probl_non_integr_exp} G. I. Arkhipov and A. N. Zhitkov, \emph{Waring's problem with nonintegral exponent}, Izv. Akad. Nauk SSSR Ser. Mat. \textbf{48} (1984), No.6, 1138--1150.
	
	\bibitem[Bak86]{baker_book_dioph_ineq} R. C. Baker, \emph{Diophantine inequalities}, London Math. Soc. Monographs, New Series, vol. 1.  The Clarendon Press, Oxford, pp. xii+275, 1986.
	
	\bibitem[Big18]{biggs_waring_shifts} K. Biggs, \emph{On the asymptotic formula in {W}aring's problem with shifts}, J. Number Theory \textbf{189} (2018), 353--379.
	
	\bibitem[BG99]{bentkus_goetze_paper} V. Bentkus and F. G\"{o}tze, \emph{Lattice point problems and distribution of values of quadratic forms}, Ann. of Math. (2) \textbf{150} (1999), 977--1027.
	
	\bibitem[Bou93a]{bourgain_fourier_transf_restr_I} J. Bourgain, \emph{Fourier transform restriction phenomena for certain lattice subsets and applications to nonlinear evolution equations. I. Schr\"{o}dinger equations}, Geom. Funct. Anal. no. 2 (1993), 107--156.
	
	\bibitem[Bou93b]{bourgain_fourier_transf_restr_II} J. Bourgain, \emph{Fourier transform restriction phenomena for certain lattice subsets and applications to nonlinear evolution equations. II. The KdV-equation}, Geom. Funct. Anal. no. 3 (1993), 209--262.
	
	\bibitem[BDG16]{bourgain_VMVT_proof} J. Bourgain, C. Demeter and L. Guth, \emph{Proof of the main conjecture in Vinogradov's mean value theorem for degrees higher than three}, Ann. of Math. (2) \textbf{184} (2016), 633--682.
	
	\bibitem[BD12]{bruedern_dietmann_random_dioph_ineq} J. Br\"{u}dern and R. Dietmann, \emph{Random Diophantine inequalities of additive type}, Adv. Math. \textbf{229} (2012), 3079--3095.
	
	\bibitem[BKW13a]{brued_kawada_wooley_8} J. Br\"{u}dern, K. Kawada and T.D. Wooley, \emph{Additive representation in thin sequences VIII : Diophantine inequalities in review}, Number theory - arithmetic in Shangri-La, pp.17 --76, Ser. Number Theory Appl., \textbf{8}, World Sci. Publ., Hackensack, NJ, 2013.
	
	\bibitem[BKW13b]{brued_kawada_wooley_8_annexe} J. Br\"{u}dern, K. Kawada and T.D. Wooley, \emph{ Annexe to the gallery: an addendum to " Additive representation in thin sequences VIII : Diophantine inequalities in review" }, Number theory - arithmetic in Shangri-La, pp.77 --82, Ser. Number Theory Appl., \textbf{8}, World Sci. Publ., Hackensack, NJ, 2013.
	
	\bibitem[Cho15]{chow_cubes_shifts} S. Chow, \emph{Sums of cubes with shifts}, J. Lond. Math. Soc. (2) \textbf{91} (2015), 343--366.
	
	\bibitem[Cho16]{chow_waring_shifts} S. Chow, \emph{Waring's problem with shifts}, Mathematika \textbf{62} (2016), 13--46.
	
	\bibitem[Dav05]{davenport_book} H. Davenport, \emph{Analytical Methods for Diophantine Equations and Inequalities}, 2nd edn., Cambridge University Press (Cambridge 2005).
	
	\bibitem[DH46]{davenport-heilbr-ineq} H. Davenport and H. Heilbronn, \emph {On indefinite quadratic forms in five variables}, J. London Math. Soc. \textbf{21} (1946), 185-193.
	
	\bibitem[Fre00]{freeman_lower_bounds} D. E. Freeman, \emph{Asymptotic lower bounds for {D}iophantine inequalities}, Mathematika \textbf{47} (2000), 127--159.
	
	\bibitem[Fre02]{freeman_asymp_formula} D. E. Freeman, \emph{Asymptotic lower bounds and formulas for Diophantine inequalities}, Number theory for the millennium (Urbana, IL, 2000 ), (M.A.Bennett et. al., ed.), vol. 2, 2002, pp.57--74.
	
	\bibitem[Fre03]{freeman_inhomog_ineq} D. E. Freeman, \emph{Additive inhomogeneous Diophantine inequalities}, Acta Arith. \textbf{107} (2003), 209--244. 
	
	\bibitem[GK91]{graham_kolesnik_book} S. W. Graham,  G. Kolesnik, \emph{van der Corput's method of exponential sums}, London Mathematical Society Lecture Note Series \textbf{126}, Cambridge University Press, Cambridge 1991.
	
	\bibitem[Mit70]{mitrinovic_book}  D. S. Mitrinovi\'{c}, \emph{Analytic inequalities}, Springer-Verlag, New York-Berlin 1970.
	
	\bibitem[Par02]{parsell_ineq_in_primes} S.T. Parsell, \emph{Irrational linear forms in prime variables}, J. Number Theory \textbf{97} (2002), 144--156.
	
	\bibitem[Seg33a]{segal_1933_I} B. I. Segal, \emph{Sur la distribution des valeurs d'une certaine fonction}, Travaux Inst. Physico-Math. Stekloff, Acad. Sci. USSR \textbf{4} (1933), 37--48.
	
	\bibitem[Seg33b]{segal_1933_II} B. I. Segal, \emph{ Sur un th\'eor\`eme g\'en\'erate de la th\'eorie additive des nombres}, Travaux Inst. Physico-Math. Stekloff, Acad. Sci. USSR \textbf{4} (1933), 49--62.
	
	\bibitem[Seg33c]{segal_1934_ineq} B. I. Segal, \emph{Warings theorem for degrees with fractional and irrational exponents}, Travaux Inst. Physico-Math. Stekloff, Acad. Sci. USSR \textbf{5} (1934), 73--86.
	
	\bibitem[Vin35]{vinogradovVMVT} I. M. Vinogradov, \emph{New estimates for Weyl sums}, Dokl. Akad. Nauk SSSR \textbf{8} (1935), 195--198.
	
	\bibitem[Wat89]{watt-exp-sums-riemann-zeta-fnct2} N. Watt, \emph {Exponential sums and the {R}iemann zeta-function. {II}}, J. London Math. Soc. \textbf{39} (1989), no.3, 385--404.
	
	\bibitem[Woo03]{wooley_dioph_ineq} T.D. Wooley, On {D}iophantine inequalities: Freeman's asymptotic formulae, Proceedings of the {S}ession in {A}nalytic {N}umber {T}heory and {D}iophantine {E}quations (Bonn January-June 2002) (Edited by D.R.Heath-Brown and B.Z.Moroz), no. 360, Bonner Mathematische Schriften, 2003.
	
	\bibitem[Woo16]{wooley_VMVT_cubic} T. D. Wooley, \emph{The cubic case of the main conjecture in Vinogradov's mean value theorem}, Adv. Math. \textbf{294} (2016), 532--561.
	
	\bibitem[Woo17]{wooley_discr_fourier_restr_via_eff_congr} T.D. Wooley, \emph{Discrete Fourier restriction via efficient congruencing}, Int. Math. Res. Not. 2017, (2017), 1342--1389.
	
	\bibitem[Woo19]{wooley_NEC} T. D. Wooley, \emph{Nested efficient congruencing and relatives of Vinogradov's mean value theorem},  Proc. Lond. Math. Soc. (3) \textbf{118} (2019), no. 4, 942--1016.
	
	\bibitem[Zyg74]{zygmund_1974} A. Zygmund, \emph{On Fourier coefficients and transforms of functions of two variables}, Studia Math. \textbf{50} (1974), 189--201.
	
\end{thebibliography}
\end{document}